\begin{document}

\theoremstyle{definition}
\newtheorem{defs}{Definition}[section]

\theoremstyle{plain}
\newtheorem{prop}[defs]{Proposition}
\newtheorem{thm}[defs]{Theorem}
\newtheorem{cor}[defs]{Corollary}
\newtheorem{lem}[defs]{Lemma}

\newcommand{\der}[1]{Definition~\ref{#1}}
\newcommand{\propr}[1]{Proposition~\ref{#1}}
\newcommand{\thmr}[1]{Theorem~\ref{#1}}
\newcommand{\corr}[1]{Corollary~\ref{#1}}
\newcommand{\lemr}[1]{Lemma~\ref{#1}}
\newcommand{\secr}[1]{Section~\ref{#1}}

\title{Computing the density of tautologies in propositional logic by solving system of quadratic equations of generating functions}

\author{Taehyun Eom}

\maketitle

\begin{abstract}
    In this paper, we will provide a method to compute the density of tautologies among the set of well-formed formulae consisting of $m$ variables, the negation symbol and the implication symbol; which has a possibility to be applied for other logical systems. This paper contains computational numerical values of the density of tautologies for two, three, and four variable cases. Also, for certain quadratic systems, we will build a theory of the $s$-cut concept to make a memory-time trade-off when we compute the ratio by brute-force counting, and discover a fundamental relation between generating functions' values on the singularity point and ratios of coefficients, which can be understood as another interpretation of the Szeg\H{o} lemma for such quadratic systems. With this relation, we will provide an asymptotic lower bound $m^{-1}-(7/4)m^{-3/2}+O(m^{-2})$ of the density of tautologies in the logic system with $m$ variables, the negation, and the implication, as $m$ goes to the infinity.
\end{abstract}

\section{Introduction}\label{intro}
Propositional logic is one of the most basic theories of mathematics, and tautology is one of the most important concepts of it. In certain propositional logic systems, the set of tautologies and the set of theorems are equal, so understanding proofs is strongly related to understanding tautologies. For example, in the Hilbert deduction system with logical symbols $\neg$ (negation) and $\to$ (implication); an inference rule modus ponens, which means $\phi\to\psi$ and $\phi$ proves $\psi$; if we take axiom schemes as
\begin{itemize}
\item $\phi\to[\psi\to\phi]$,
\item $[\phi\to[\psi\to\xi]]\to[[\phi\to\psi]\to[\phi\to\xi]]$,
\item $[\neg\psi\to\neg\phi]\to[\phi\to\psi]$,
\end{itemize}
the given propositional logic system is well-known to be sound, which means every theorem is a tautology, and complete, which means every tautology is a theorem. This kind of complete propositional logic systems are widely accepted as a logical base for modern mathematics. Here, first two axiom schemes are equivalent to the deduction theorem: under fixed assumptions, $\phi$ proves $\psi$ if and only if $\phi\to\psi$ is provable. Moreover, by the Curry-Howard correspondence, $\phi\to[\psi\to\phi]$ corresponds to the $K$ combinator $Kxy=x$ and $[\phi\to[\psi\to\xi]]\to[[\phi\to\psi]\to[\phi\to\xi]]$ corresponds to the $S$ combinator $Sxyz=xz(yz)$ where those $S$, $K$ combinators are considered as one of the simplest Turing complete language.

So we have some possibility that tautologies and theorems are equivalent, but indeed, their structures are quite different. For example, the truth value of a well-formed formula in a propositional logic system is completely determined by its subformulae and truth tables of logical operators, so a decision algorithm for tautologies is simply constructible and being a tautolgy is universal for most logic systems where a given formula is valid. But provability cannot be determined by its subformulae, so it is hard to design a decision algorithm for theorems and being a theorem completely depends on which logic system is considered for. Hence, if we want to generate randomized proofs as a first step of AI-based proof, it is helpful to understand the proportion of theorems clearly, and we can replace it to the proportion of tautologies, which is simpler, in such complete propositional logic systems.

Also, even though the structure of tautologies is simpler, we still have several interesting questions about tautology itself, not only related to the theorem. 3-SAT problem is one of the example that can be understood as a tautology decision problem, since satisfiability is equivalent to non-tautology-ness of its negation. For example, $(x\vee y\vee z)\wedge (x\vee y\vee \neg z)$ is satisfiable if and only if $(\neg x\wedge\neg y\wedge\neg z)\vee (\neg x\wedge\neg y\wedge z)$ is not a tautology. Hence, understanding tautologies is strongly related to understanding the satisfiability.

Therefore, there exist some preceding studies such as \cite{Za}, which computes the density of tautologies in the logic system with implication and negation on one variable, \cite{Fo}, which computes the asymptotic density, as the number of variables goes to infinity, of tautologies in the logic system with implication and negative literals, and \cite{As}, which computes the densities in several logic systems based on one variable, etc. Here, the density means the limit probability of tautologies among fixed length well-formed formulae where the length goes to the infinity.

Among those studies, many of them such as \cite{Fo, Ch, Fo2, Fo3, Ge, Ge2, Ge3, Ko, Mo, De}, focus on logic systems without $\neg$, where this negation symbol makes structure complicate since it is a unary operator, so it changes Catalan-like structure to Motzkin-like structure like in \cite{Bo, Gr}. \cite{Za} and \cite{As} are studies cover the $\neg$ symbol, but they only focus on the one variable case.

Hence, this paper will cover propositional logic systems with the negation and multivariables. Results mainly consider the system with the negation and the implication to focus on the Hilbert deduction system, but methods can be easily applicable to other logic systems such as one contains the `and' or the `or', etc. \secr{basicdefs} is for basic definitions about propositional logic systems. This section covers the reason why the density exists, and how to construct the system of quadratic equations of generating functions properly to represent tautologies.

\secr{exactcomp} will provide a divide and conquer method to solve the given system of equations exactly by introducing well-organized partitions, and an algorithm computes the density of tautologies in the logic system with $m$ variable. Well-organized partition is a coset-like object for set operators, and it will give a proper way to cluster the system of equations hierarchically. As a result, we can compute the density of tautologies by solving quadratic equations repeatedly, and hence, the density will be a constructible number.

\secr{analytic} will provide an analytic approach to general systems of quadratic equations to make a memory-time trade-off to estimate the limit ratio of coefficients. We will construct and develop a theory of the `$s$-cut' concept which gives a heuristic method to compute an approximation. There is still some remaining problems to complete the theory rigorously, but we can observe that it gives practically improved results with same amount of informations.

\secr{asymptotic} will analyze the asymptotic behavior of the density of tautologies. For the propositional logic system with the negation, the implication, and $m$ variables, the density of tautologies have asymptotic lower bound $m^{-1}-(7/4)m^{-3/2}+O(m^{-2})$. Also, this section includes some reasonable evidences to conjecture this lower bound $m^{-1}$ is actually tight asymptotic behavior.

\section{Basic Definitions}\label{basicdefs}
A propositional logic system consists of a set of variables $X$ and logical operators, such as $\neg$ (negation), $\to$ (implication), $\wedge$ (conjunction), $\vee$ (disjunction), $|$ (NAND), etc. Here, $\neg$ is a unary operator, and others are binary operators. These operators recursively define well-formed formulae. Also, each operator has its own truth table, and these truth tables recursively extend each truth assignment $v:X\to\{T\textrm{ (true) }, F\textrm{ (false) }\}$ to the truth valuation of well-formed formulae. In other words, the valuation $\llbracket\phi;v\rrbracket$ is well-defined for any well-formed formula $\phi$ and truth assignment $v$. Moreover, we can give values as $T=1$, $F=0$, so algebraic expressions such as $\llbracket\neg\phi;v\rrbracket=1-\llbracket\phi;v\rrbracket$ and $\llbracket\phi\to\psi;v\rrbracket=1 - \llbracket\phi;v\rrbracket(1 - \llbracket\psi;v\rrbracket)$ are allowed for convenience.

Now, let $\mathcal{VA}$ be the set of truth assignments, $\mathcal{W}$ be the set of well-formed formulae. Then, for any $\phi\in\mathcal{W}$, let the falsity set $F_\phi=\{v\in\mathcal{VA}\mid \llbracket\phi;v\rrbracket=F\}$ and for any $v\in\mathcal{VA}$, let the set of false formulae as $F^{v}=\{\phi\in\mathcal{W}\mid \llbracket\phi;v\rrbracket=F\}$. Similarly, we define $T_\phi$ and $T^v$. These $F_\bullet$, $T_\bullet$ convert a logical operator to a set operator. For example, $F_{\phi\to\psi}=F_\psi\setminus F_\phi$.

Here, since the valuation only relies on truth tables, a logical operator is actually well-defined on $\mathcal{P}(\mathcal{VA})$ as a set operator. For example, we may define $A\to B:=B\setminus A$ so we have $F_{\phi\to\psi}=F_\phi\to F_\psi$. It is remarkable that in this definition using $F_\bullet$, $A\vee B = A\cap B$ and $A\wedge B=A\cup B$. Lastly, for a well-formed formula $\phi$, let $X[\phi]$ be the set of variables occur in $\phi$ and $A[\phi]=A\cap X[\phi]$ for any $A\subseteq X$. Then, for two truth assignments $u,v$, if $u|_{X[\phi]}=v|_{X[\phi]}$, it implies $\llbracket\phi;u\rrbracket=\llbracket\phi;v\rrbracket$.

If every variable is free, then there exists a natural bijection between truth assignments and subsets of variables. In particular, a truth assignment $v$ corresponds to the set $\{x\in X\mid v(x)=T\}$. In other words, we may assume $\mathcal{VA}=\mathcal{P}(X)$. Also, even we admit some limited variables such as negative literals, $\bot$ (false connective), or $\top$ (true connective), we still have natural correspondence between possible truth assignments and feasible subsets of variables. For example, suppose $A\subseteq X$ is a feasible subset of variables. If we have the negative literal $\overline{x}$ for a variable $x$, then we have a condition $|A\cap\{x,\overline{x}\}|=1$. For the $\bot$, $\bot\not\in A$ and for the $\top$, $\top\in A$. Hence, $\llbracket\phi;A\rrbracket$ is also valid for appropriate $A\subseteq X$. Especially, for the case that every variable is free, we have $\llbracket\phi;A\rrbracket=\llbracket\phi;A[\phi]\rrbracket$.

We have two important categories of well-formed formulae: tautologies and antilogies. A tautology is a well-formed formula which is true for every valuations, and an antilogy is a well-formed formula which is false for every valuations. In other words, a well-formed formula $\phi$ is a tautology if and only if $F_\phi=\emptyset$, which is equivalent to $\phi\in\cap_{v\in\mathcal{VA}}T^v$, and an antilogy if and only if $F_\phi=\mathcal{VA}$, which is equivalent to $\phi\in\cap_{v\in\mathcal{VA}}F^v$.

Now, for any well-formed formula $\phi$, we define its length $\ell(\phi)$ as usual: the number of all letters used in it except parentheses, which is used only for preventing ambiguity of the infix notation. Also, if every logical operator is a binary operator, then the length $\ell$ is always odd, so it is natural to use the reduced length $\ell_2(\phi)=\frac{\ell(\phi)+1}{2}$ since it gives consecutive values. With the proper choice of a consecutive length from $\ell$ and $\ell_2$, we have the following.

\begin{lem}\label{lem:consecutive}
If a propositional logic system have the implication $\to$ or an equivalent expression with fixed length such as $\neg\phi\vee\psi$, then we have a constant $N$ such that for any $n\geq N$, there exists a tautology of length $n$. 
\end{lem}
\begin{proof}
Note that for any variable $p$ and well-formed formula $\phi$, $p\to[\phi\to p]$ is a tautology. Since $\ell(p\to[\phi\to p])=\ell(\phi)+4$ and $\ell_2(p\to[\phi\to p])=\ell_2(\phi)+2$; and we use the consecutive length, so is done.
\end{proof}

Now, we will count the number of well-formed formulae, and hence, we need to consider only finite variables to get meaningful result. Let $W(z)$ be the generating function of well-formed formulae made by $\ell$, i.e., $W(z)=\sum_{\phi\in\mathcal{W}}z^{\ell(\phi)}$. Then, we have
\begin{align*}
W(z)=|X|z
&+ (\textrm{the number of unary operators})zW(z)\\
&+ (\textrm{the number of binary operators})zW(z)^2. 
\end{align*}
Also, for the case without unary operator and using $\ell_2$, we have
\begin{displaymath}
W_2(z)=|X|z + (\textrm{the number of binary operators})W_2(z)^2. 
\end{displaymath}

Here, this equation for $W_2$ actually does not determine $W_2$ uniquely, so we need an additional condition $W_2(0)=0$. In other words, it is natural to consider $\widetilde{W_2}(z)=\frac{W_2(z)}{z}$ which satisfies
\begin{displaymath}
\widetilde{W_2}(z) = |X| + (\textrm{the number of binary operators})z\widetilde{W_2}(z)^2.
\end{displaymath}

From now on, we will mainly focus on the specific propositional logic system, with $m$ variables $X=\{x_0,x_1,\cdots,x_{m-1}\}$, $\neg$, and $\to$. Since $W(z)=mz+zW(z)+zW(z)^2$, we have
\begin{displaymath}
W(z)=\frac{1-z-\sqrt{(1-(1+2\sqrt{m})z)(1-(1-2\sqrt{m})z)}}{2z}.
\end{displaymath}

Before we compute the density of tautologies, we first consider the following lemma.
\begin{lem}
Suppose $a_n$, $b_n$ are positive sequences, $\sum_{n=0}^\infty b_n=\infty$ and $\lim_{n\to\infty}\frac{a_n}{b_n}=r$. Then,
\begin{displaymath}
\lim_{n\to\infty}\frac{\sum_{k=0}^n a_k}{\sum_{k=0}^n b_k}=r.
\end{displaymath}
\end{lem}
Hence, we have
\begin{align*}
\lim_{n\to\infty}
&\frac{\textbf{Number of tautolgies with length at most }n}{\textbf{Number of well-formed formulae with length at most }n}\\
&=\lim_{n\to\infty}\frac{\textbf{Number of tautolgies with length }n}{\textbf{Number of well-formed formulae with length }n},
\end{align*}
if the later one exists. Thus, we will compute the later one as done in \cite{As} and \cite{Za}, and call it as the density of tautologies.

Now, to compute the density, we will consider the generating function of tautologies. For any $A\subseteq\mathcal{VA}$, let
\begin{displaymath}
W_A(z)=\sum_{\phi\in\mathcal{W}, F_\phi=A}z^{\ell(\phi)}.
\end{displaymath}
Hence, $W_\emptyset(z)$ is the generating function of tautologies. These generating functions form a following system of equations.
\begin{itemize}
\item If $A=F_{x_i}=\mathcal{P}(X\setminus\{x_i\})$ for some $x_i\in X$, then
\begin{displaymath}
W_A(z)= z + zW_{\neg A}(z) + \sum_{B\to C=A}zW_{B}(z)W_{C}(z) = z + zW_{A^c}(z) + \sum_{C\setminus B=A}zW_{B}(z)W_{C}(z)
\end{displaymath}
\item Otherwise,
\begin{displaymath}
W_A(z)= zW_{\neg A}(z) + \sum_{B\to C=A}zW_{B}(z)W_{C}(z) = zW_{A^c}(z) + \sum_{C\setminus B=A}zW_{B}(z)W_{C}(z).
\end{displaymath}
\end{itemize}

For this system of equations, Drmota-Lalley-Woods theorem (\cite{Fl}, pp. 489) ensure the existence of the ratio $\lim_{n\to\infty}\frac{[z^n]W_\emptyset(z)}{[z^n]W(z)}$.
\begin{thm}[Drmota-Lalley-Woods theorem]
If a nonlinear polynomial system, which means $n$ equations for $y_1(z),\cdots,y_n(z)$ of the form $y_i=f_i(y_1,\cdots,y_n)$ where $f_i$'s are polynomials, satisfies the following:
\begin{itemize}
\item The system uniquely determines the formal power series solution $y_1(z)$. $y_2(z)$, $\cdots$, $y_n(z)$ and that solution has no negative coefficients.
\item The dependency graph of the system is strongly connected. Here, the dependency graph is a directed graph among $y_i$'s where arcs represent appearances in equations.
\item For each $y_i$, there exists $N_i$ such that $n\geq N_i$ implies $[z^n]y_i>0$.
\end{itemize}
then, every $y_i$ has a common radius of convergence $\rho<\infty$ and for each $y_i$, there exists a function $h_i$ analytic at the origin and $y_i(z)=h_i(\sqrt{1-\frac{z}{\rho}})$ near $\rho$. Moreover, we have
\begin{displaymath}
[z^n]y_i \simeq \frac{1}{n\sqrt{n}\rho^n}\sum_{k\geq 0}\frac{d_k}{n^k}.
\end{displaymath}
\end{thm}

In our logic system, with the negation and the implication, we may check conditions easily.
\begin{itemize}
\item From our equation, $[z^n]W_A(z)$ values are uniquely determined by $[z^k]W_B(z)$ for every $k<n$ and $B\subseteq\mathcal{VA}$, so each $W_A(z)$ is uniquely determined.
\item For any $A\subseteq\mathcal{VA}$, $\emptyset\to A=A\setminus\emptyset=A$ and $A\to\emptyset=\emptyset\setminus A=\emptyset$, so the dependency graph is strongly connected since we have both arcs $(W_\emptyset, W_A)$ and $(W_A, W_\emptyset)$.
\item At first, for any $A\subseteq\mathcal{VA}$, there exists a well-formed formula $\phi$ with $F_\phi=A$. Now, for any tautology $\tau$, $F_{\tau\to\phi}=A$, also. Hence, by the \lemr{lem:consecutive}, there exists $N_A$ such that $n\geq N_A$ implies there exists a well-formed formula $\psi$ with length $n$ and $F_\psi=A$.
\end{itemize}
Hence, the density of tautologies is well-defined.

Moreover, to compute the exact value of the density, we will introduce the Szeg\H{o} lemma.
\begin{lem}
Suppose that two power series $U(z)$, $V(z)$ have the common radius of convergence $\rho$, this $\rho$ is the unique simple singularity in the disk $|z|\leq|\rho|$ for them, we have series expansion $U(z)=\sum_{k\geq 0}\widehat{u_k}\left(\sqrt{1-\frac{z}{\rho}}\right)^k$, $V(z)=\sum_{k\geq 0}\widehat{v_k}\left(\sqrt{1-\frac{z}{\rho}}\right)^k$ near $\rho$, and $\widehat{v_1}\not=0$. Then, $\lim_{n\to\infty}\frac{[z^n]U(z)}{[z^n]V(z)}=\frac{\widehat{u_1}}{\widehat{v_1}}$. Thus, we have
\begin{displaymath}
\lim_{n\to\infty}\frac{[z^n]U(z)}{[z^n]V(z)}=\lim_{z\to\rho^{-}}\frac{\frac{U(z)-U(\rho)}{\sqrt{1-z/\rho}}}{\frac{V(z)-V(\rho)}{\sqrt{1-z/\rho}}}=\lim_{z\to\rho^{-}}\frac{-2\rho U'(z)\sqrt{1-z/\rho}}{-2\rho V'(z)\sqrt{1-z/\rho}}.
\end{displaymath}
\end{lem}

Since $W(z)=\frac{1-z-\sqrt{(1-(1+2\sqrt{m})z)(1-(1-2\sqrt{m})z)}}{2z}=\sum_{A\subseteq\mathcal{VA}}W_A(z)$, $W$ also has the common radius of convergence. Hence, our target radius of convergence $\rho$ is $\frac{1}{2\sqrt{m}+1}$, and we need to analyze $W_\emptyset(z)$ near $\frac{1}{2\sqrt{m}+1}$ to compute the density of tautologies.

\section{Exact Computation}\label{exactcomp}

Since our system of equations has $2^{|\mathcal{VA}|}=2^{2^m}$ unknowns, we will try to solve it by divide and conquer method.

\begin{defs}
A partition $\{P_1,\cdots, P_n\}$ of $\mathcal{P}(\mathcal{VA})$ is said to be \textbf{well-organized for an operation} if the given operator is well-defined among parts in the partition. For example, the partition is well-organized for $\to$ if for any $1\leq i,j\leq n$, there exists $k$ such that for any $A\in P_i$ and $B\in P_j$, we have $A\to B\in P_k$. In this case, we will denote this as $P_i\to P_j=P_k$.
\end{defs}
This is a reminiscent of a quotient group in group theory. Hence, we will define a reminiscent concept for the coset as the following.

\begin{defs}
For any $A,B\subseteq\mathcal{VA}$, the standard subclass of $\mathcal{P}(\mathcal{VA})$ associated to $(A,B)$ is defined as
\begin{displaymath}
\mathcal{I}_{A;B}=\{Y\mid A\setminus B\subseteq Y\subseteq A\cup B\}.
\end{displaymath}
Here, if $A\cap B=\emptyset$, then $\mathcal{I}_{A;B}=\{A\cup Y\mid Y\subseteq B\}=\{C\subseteq\mathcal{VA}\mid C\setminus B=A\}$, and by the definition, $\mathcal{I}_{A;B}=\mathcal{I}_{A\setminus B;B}=\mathcal{I}_{A\cup B;B}$.

Then, let \textbf{the standard subclass partition associated to $B$} as
\begin{displaymath}
\mathcal{I}_{;B}=\{\mathcal{I}_{A;B}\mid A\cap B=\emptyset\}.
\end{displaymath}
It is easy to check that this partition is naturally induced by the equivalence relation $A\sim_B C\Leftrightarrow A\setminus B=C\setminus B\Leftrightarrow A\cup B=C\cup B\Leftrightarrow (A\setminus C)\cup(C\setminus A)\subseteq B$.
\end{defs}

\begin{prop}\label{propPart}
Standard subclasses satisfy the following equalities.
\begin{enumerate}[(a)]
\item $\{Y^c\mid Y\in \mathcal{I}_{A;B}\}=\mathcal{I}_{A^c;B}=\mathcal{I}_{\neg A;B}$,
\item $\{Y\cap Z\mid Y\in \mathcal{I}_{A;B},Z\in \mathcal{I}_{C;B}\}=\mathcal{I}_{A\cap C;B}=\mathcal{I}_{A\vee C;B}$,
\item $\{Y\cup Z\mid Y\in \mathcal{I}_{A;B},Z\in \mathcal{I}_{C;B}\}=\mathcal{I}_{A\cup C;B}=\mathcal{I}_{A\wedge C;B}$,
\item $\{Y\setminus Z\mid Y\in \mathcal{I}_{A;B},Z\in \mathcal{I}_{C;B}\}=\mathcal{I}_{A\setminus C;B}=\mathcal{I}_{C\to A;B}$,
\item $\mathcal{I}_{A;B}\cup \mathcal{I}_{A\cup\{y\};B} = \mathcal{I}_{A;B\cup\{y\}}$ if $y\not\in A\cup B$.
\end{enumerate}
\end{prop}
\begin{proof}
(a) For every $Y\in \mathcal{I}_{A;B}$, we have $Y^c\setminus B=Y^c\cap B^c=(Y\cup B)^c=(A\cup B)^c=A^c\setminus B$, and so, $Y^c\in\mathcal{I}_{A^c;B}$. Then, the first equality is obtained by $|\mathcal{I}_{A;B}|=|\mathcal{I}_{A^c;B}|$.

(b), (c), (d) can be done similarly.

(e) $\mathcal{I}_{A;B\cup\{y\}}=\{(A\setminus B)\cup Y, (A\setminus B)\cup Y\cup\{y\}\mid Y\subseteq B\}=\mathcal{I}_{A;B}\cup \mathcal{I}_{A\cup\{y\};B}$.
\end{proof}
Hence, $\mathcal{I}_{;B}$ is well-organized for $\neg$, $\to$, $\vee$ and $\wedge$, and it really works as a `quotient poset' isomorphic to $\mathcal{P}(\mathcal{VA}\setminus B)$ preserves set operations. Also, $\mathcal{I}_{A;B}$ is a translation of $\mathcal{P}(B)$, $\mathcal{I}_{\emptyset;B}$ is an order ideal, and $\mathcal{I}_{B^c;B}$ is a filter in the poset $\mathcal{P}(\mathcal{VA})$.

Moreover, we have the following.
\begin{prop} For a partition $P$ of $\mathcal{P}(A)$ where $A$ is a finite set, the followings are equivalent.
\begin{itemize}
\item $P$ is well-organized for $\setminus$.
\item $P$ is well-organized for $\bullet^c$ and $\cup$.
\item $P$ is well-organized for $\bullet^c$ and $\cap$.
\item $P=\mathcal{I}_{;B}$ for some $B\subseteq A$.
\end{itemize}
\end{prop}
\begin{proof}
It is enough to prove that if $P$ is well-organized for $\setminus$, $\bullet^c$, $\cup$ and $\cap$, then $P=\mathcal{I}_{;B}$ for some $B\subseteq A$. At first, since $P$ is a partition, it naturally defines the equivalence relation $\sim$. Now, let $Y\in P$ be the part containing $\emptyset$. Since $\emptyset\cup\emptyset=\emptyset$, $Y$ is closed under union. Hence, if $B=\cup_{Z\in Y}Z$, then $B\in Y$, also. Moreover, for any $C\subseteq B$, we have $C=C\cup\emptyset\sim C\cup B=B$, so $C\in Y$. Hence, $Y=\mathcal{P}(B)$.

Now, for any $C,D\subseteq A$, if $C\setminus B=D\setminus B$, then $C=(C\setminus B)\cup (C\cap B)\sim(D\setminus B)\cup (D\cap B)=D$, so $C\sim D$. Lastly, suppose that $C\sim D$. Then, $C\setminus B\sim C\setminus\emptyset=C\sim D\setminus\emptyset\sim D\setminus B$. Therefore, $\emptyset=C\setminus C\sim(C\setminus B)\setminus(D\setminus B)=(C\setminus D)\setminus B$. Hence, $(C\setminus B)\setminus(D\setminus B)\subseteq B$, which implies $(C\setminus B)\setminus(D\setminus B)=\emptyset$, so we have $C\setminus B\subseteq D\setminus B$. By symmetry, $C\setminus B=D\setminus B$. Thus, $C\sim D$ if and only if $C\setminus B=D\setminus B$, which means $P=\mathcal{I}_{;B}$.
\end{proof}

Now, let $I_{A;B}(z)$ be the generating function
\begin{displaymath}
I_{A;B}(z):=\sum_{Y\in \mathcal{I}_{A;B}} W_Y(z).
\end{displaymath}
By the definition, for any $Y\subseteq\mathcal{VA}$, $W_Y(z)=I_{Y;\emptyset}(z)$ and $W(z)=I_{\emptyset;\mathcal{VA}}$.

For fixed $B\subseteq\mathcal{VA}$, these $I_{A;B}(z)$ form a following system of equations,
\begin{displaymath}
I_{A;B}(z)=\left|\{x\in X\mid F_x\in \mathcal{I}_{A;B}\}\right|z + zI_{A^c;B}(z) + \sum_{\underset{C\cap B=D\cap B=\emptyset}{C\setminus D=A\setminus B}}zI_{C;B}(z)I_{D;B}(z),
\end{displaymath}
which is similar to the system of equations for $W_{A}$'s. Here, the number of unknowns is reduced to $2^{2^m - |B|}$. Hence, we will try to solve these equations from large $B$'s to small $B$'s. Now, we may count the number of pairs $(C,D)$ such that $C\cap B=D\cap B=\emptyset$ and $C\setminus D=A\setminus B$. By considering the map $(C,D)\mapsto (C\cap D, D\setminus C, (C\cup D)^c)$, the number of such pair $(C,D)$ is equal to the number of triplets $(C\cap D, D\setminus C, (C\cup D)^c)$ partitioning $\mathcal{VA}\setminus (A\cup B)$. Hence, the number is $3^{|\mathcal{VA}|-|A\cup B|}=3^{|(A\cup B)^c|}$. Thus, it is worth to define
\begin{displaymath}
I_{-;B}(z):=I_{B^c;B}(z)
\end{displaymath}
for convenience.

\begin{prop}\label{lincomprop}
For any $A,B\subseteq\mathcal{VA}$, we have the following:
\begin{enumerate}[label=(\alph*)]
\item $I_{A;B}(z)$ is a linear combination of elements of the set
\begin{displaymath}
\{I_{\emptyset;B}(z)\}\cup\{I_{C;B'}(z)\mid C\subsetneq A\setminus B, \ C\cap B'=\emptyset, \ |B'|=|B|+1, \ B\subseteq B'\},
\end{displaymath}
where the coefficient of $I_{\emptyset;B}(z)$ is $(-1)^{|A\setminus B|}$.
\item $I_{A;B}(z)$ is a linear combination of elements of the set
\begin{displaymath}
\{I_{-;B}(z)\}\cup\{I_{C;B'}(z)\mid A\setminus B\subseteq C, \ C\cap B'=\emptyset, \ |B'|=|B|+1, \ B\subseteq B'\},
\end{displaymath}
where the coefficient of $I_{-;B}(z)$ is $(-1)^{|B^c\setminus A|}$.
\end{enumerate}
\end{prop}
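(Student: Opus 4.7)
The plan is to use Proposition~\ref{propPart}(e) as the sole combinatorial input. Since $P_{A;B} \cup P_{A \cup \{y\};B} = P_{A;B\cup\{y\}}$ is a disjoint union whenever $y \notin A \cup B$, passing to generating functions gives the basic identity
\begin{equation*}
I_{A;B}(z) + I_{A\cup\{y\};B}(z) = I_{A;B\cup\{y\}}(z).
\end{equation*}
Both parts of the proposition will follow by induction, using two different rearrangements of this identity.

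For part (a), I would induct on $|A|$. The base case $A = \emptyset$ is immediate, as $I_{\emptyset;B}$ trivially equals itself with coefficient $+1$. For $|A| \geq 1$, fix any $y \in A$, let $A' = A \setminus \{y\}$ (so $y \notin A' \cup B$), and apply the identity with $A'$ in place of $A$ to obtain
\begin{equation*}
I_{A;B} = I_{A';B\cup\{y\}} - I_{A';B}.
\end{equation*}
The first summand already fits the allowed form with $C = A' \subsetneq A$ and $B' = B \cup \{y\}$, and the second is expanded by the inductive hypothesis since $|A'| < |A|$; every further term $I_{C;B''}$ produced satisfies $C \subsetneq A' \subsetneq A$ as required, and the distinguished term $I_{\emptyset;B}$ simply picks up an extra minus sign.

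For part (b), the dual is to induct on $|(A \cup B)^c| = 2^m - |A| - |B|$. The base case $A = B^c$ gives $I_{A;B} = I_{B^c;B}$ directly. Otherwise, pick $y \in (A \cup B)^c$ and rearrange the identity as
\begin{equation*}
I_{A;B} = I_{A;B\cup\{y\}} - I_{A\cup\{y\};B}.
\end{equation*}
The first term fits the allowed form with $C = A$ and $B' = B \cup \{y\}$ (trivially $A \subseteq A$), while the second is expanded by induction, since $|((A \cup \{y\}) \cup B)^c| = |(A \cup B)^c| - 1$; every further term $I_{C;B''}$ produced satisfies $A \cup \{y\} \subseteq C$, hence $A \subseteq C$ as required.

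I do not anticipate a serious obstacle. The only bookkeeping is to check that the side conditions $C \cap B' = \emptyset$, $B \subseteq B'$, and $|B'| = |B|+1$ survive each single step (they do, because $y$ is chosen outside $B$ in both cases), and that the coefficient of the distinguished term merely flips sign at each step and so stays in $\{+1,-1\}$. The substantive choice is simply matching the induction parameter, $|A|$ versus $|(A \cup B)^c|$, to the rearrangement of Proposition~\ref{propPart}(e) that produces the desired form.
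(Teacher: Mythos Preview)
Your proposal is correct and follows exactly the paper's approach: both parts use the identity coming from Proposition~\ref{propPart}(e), with part~(a) inducting on $|A|$ via $I_{A;B}=I_{A\setminus\{y\};B\cup\{y\}}-I_{A\setminus\{y\};B}$ and part~(b) via $I_{A;B}=I_{A;B\cup\{y\}}-I_{A\cup\{y\};B}$. Your write-up is in fact more detailed than the paper's (which dispatches (b) in a single line), and your explicit choice of induction parameter $|(A\cup B)^c|$ for (b) makes the argument cleaner to read.
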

\begin{proof}
(a) It is enough to prove when $A\cap B=\emptyset$. We will induct on $|A|$. It is trivial for $A=\emptyset$. Suppose it is true for every $A$ such that $|A|=n$ and $A\cap B=\emptyset$. Then, if $|A|=n+1$ and $A\cap B=\emptyset$, choose any $y\in A$. Now, we have
\begin{displaymath}
I_{A;B}(z)=I_{A\setminus\{y\};B\cup\{y\}}(z)-I_{A\setminus\{y\};B}(z),
\end{displaymath}
which proves the proposition.

(b) It follows from
\begin{displaymath}
I_{A;B}(z)=I_{A;B\cup\{y\}}(z)-I_{A\cup\{y\};B}(z)
\end{displaymath}
for $y\not\in A\cup B$.
\end{proof}

\begin{cor}\label{lincom}
For any $A,A',B\subseteq\mathcal{VA}$, $I_{A';B}(z)$ is a linear combination of elements of the set
\begin{displaymath}
\{I_{A;B}(z)\}\cup\{I_{C;B'}(z)\mid |B'|=|B|+1, \ B\subseteq B'\},
\end{displaymath}
where the coefficient of $I_{A;B}(z)$ is $\pm 1$.
\end{cor}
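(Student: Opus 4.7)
The plan is to bootstrap from \propr{lincomprop}(a) by using it twice and eliminating the common ``base'' term $I_{\emptyset;B}$. Since that proposition applies to any subset of $\mathcal{P}(X)$ disjoint from $B$, I would apply it separately to $A'$ and to $A$, obtaining two expressions
\[
I_{A';B} = \varepsilon'\, I_{\emptyset;B} + R',\qquad I_{A;B} = \varepsilon\, I_{\emptyset;B} + R,
\]
where $\varepsilon,\varepsilon'\in\{+1,-1\}$ and both remainders $R,R'$ are linear combinations of functions of the form $I_{C;B'}$ with $C\cap B'=\emptyset$, $|B'|=|B|+1$, and $B\subseteq B'$, i.e.\ of elements of exactly the index set appearing on the right-hand side of the corollary.

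Next, I would solve the second identity for $I_{\emptyset;B}$, giving
\[
I_{\emptyset;B} = \varepsilon\, I_{A;B} - \varepsilon R
\]
(using $\varepsilon^{-1}=\varepsilon$), and substitute into the first to obtain
\[
I_{A';B} = \varepsilon'\varepsilon\, I_{A;B} + \bigl(R' - \varepsilon'\varepsilon R\bigr).
\]
The coefficient of $I_{A;B}$ is $\varepsilon'\varepsilon\in\{+1,-1\}$, as required. It remains only to observe that $R'-\varepsilon'\varepsilon R$ is still a linear combination of elements of $\{I_{C;B'}\mid C\cap B'=\emptyset,\ |B'|=|B|+1,\ B\subseteq B'\}$, which is immediate because this family is closed under linear combinations and both $R$ and $R'$ already lie in its linear span.

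There is no real obstacle here beyond careful bookkeeping of the signs, since \propr{lincomprop}(a) was stated with just enough precision (coefficient $\pm1$ on $I_{\emptyset;B}$, other terms only of the ``finer'' type $I_{C;B'}$) to make the elimination go through cleanly. The only mild subtlety is making sure that the index set for the remainder terms in \propr{lincomprop}(a), which there is restricted by $C\subsetneq A'$ or $C\subsetneq A$, is contained in the larger index set appearing in the corollary (which imposes no such containment on $C$); this containment is obvious, so after substitution the resulting expansion matches the stated form exactly.
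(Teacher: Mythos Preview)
Your proposal is correct and follows essentially the same approach as the paper: the paper's one-line proof (``It directly follows from the above proposition, since the coefficient of $I_{\emptyset;B}$ is $\pm 1$'') is precisely the elimination-of-$I_{\emptyset;B}$ argument you have spelled out, applying \propr{lincomprop}(a) to both $A$ and $A'$ and using the $\pm1$ coefficient to substitute. Your version simply makes explicit the bookkeeping that the paper leaves to the reader.
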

\begin{proof}
It directly follows from the above proposition, since the coefficient of $I_{\emptyset;B}$ is $\pm 1$.
\end{proof}

\begin{cor}
For any $A,B\subseteq\mathcal{VA}$, $I_{A,B}(z)$ is a linear combination of
\begin{enumerate}[label=(\alph*)]
\item
\begin{displaymath}
\{I_{\emptyset;B'}(z)\mid B\subseteq B'\subseteq A\cup B\}.
\end{displaymath}
\item
\begin{displaymath}
\{I_{-;B'}(z)\mid (A\setminus B)\cap B'=\emptyset, \ B\subseteq B'\}=\{I_{-;B'}(z)\mid B\subseteq B'\subseteq A^c\cup B\}.
\end{displaymath}
\end{enumerate}
\end{cor}
\begin{proof}
This also directly follows from \propr{lincomprop}.
\end{proof}

The following proposition gives a way to compute exact coefficients when we write $I_{A;B}(z)$ as a linear combination of $I_{\emptyset;B'}(z)$'s or $I_{-;B'}(z)$'s.
\begin{prop}\label{coeffOfIAB}
For any $A,B\subseteq\mathcal{VA}$,
\begin{enumerate}[label=(\alph*)]
\item
\begin{displaymath}
I_{A;B}(z)=(-1)^{|A\cup B|}\sum_{B\subseteq B'\subseteq A\cup B}(-1)^{|B'|}I_{\emptyset;B'}(z).
\end{displaymath}
\item
\begin{displaymath}
I_{A;B}(z)=(-1)^{|A\setminus B|}\sum_{B\subseteq B'\subseteq A^c\cup B}(-1)^{|B'|}I_{-;B'}(z)
\end{displaymath}
\end{enumerate}
\end{prop}
\begin{proof}
(a) First, $C\in \mathcal{I}_{\emptyset;B'}$ if and only if $C\subseteq B'$. Hence,
\begin{displaymath}
\mathcal{I}_{\emptyset;B'}\cap \mathcal{I}_{\emptyset;B''}=\mathcal{I}_{\emptyset;B'\cap B''}
\end{displaymath}
is satisfied for any $B'$ and $B''$. Now, we have
\begin{displaymath}
\mathcal{I}_{A;B}=\mathcal{I}_{\emptyset;A\cup B}\setminus\left(\bigcup_{y\in A\setminus B}\mathcal{I}_{\emptyset;(A\setminus\{y\})\cup B}\right).
\end{displaymath}
Thus, by the inclusion-exclusion principle, we get
\begin{align*}
I_{A;B}(z)&=\sum_{i=0}^{|A\setminus B|}\sum_{Y\subseteq A\setminus B, |Y|=i}(-1)^i I_{\emptyset;(A\setminus Y)\cup B}(z)\\
&=\sum_{Y\subseteq A\setminus B}(-1)^{|Y|}I_{\emptyset;(A\setminus Y)\cup B}(z)\\
&=\sum_{B\subseteq B'\subseteq A\cup B}(-1)^{|A\cup B|-|B'|}I_{\emptyset;B'}(z)\\
&=(-1)^{|A\cup B|}\sum_{B\subseteq B'\subseteq A\cup B}(-1)^{|B'|}I_{\emptyset;B'}(z).
\end{align*}

(b) Similarly, $C\in \mathcal{I}_{-;B'}$ if and only if ${B'}^c\subseteq C$ which is equivalent to $C^c\subseteq B'$. Hence,
\begin{displaymath}
\mathcal{I}_{-;B'}\cap \mathcal{I}_{-;B''}=\mathcal{I}_{-;B'\cap B''}
\end{displaymath}
is satisfied for any $B'$ and $B''$. Now, we have
\begin{displaymath}
\mathcal{I}_{A;B}=\mathcal{I}_{-;A^c\cup B}\setminus\left(\bigcup_{y\in(A\cup B)^c}\mathcal{I}_{-;(A\cup\{y\})^c \cup B}\right).
\end{displaymath}
Thus, by the inclusion-exclusion principle, we get
\begin{align*}
I_{A;B}(z)&=\sum_{i=0}^{|\mathcal{VA}|-|A\cup B|}\sum_{Y\subseteq (A\cup B)^c, |Y|=i}(-1)^i I_{-;(A\cup Y)^c\cup B}(z)\\
&=\sum_{Y\subseteq (A\cup B)^c}(-1)^{|Y|}I_{-;(A\cup Y)^c\cup B}(z)\\
&=\sum_{B\subseteq B'\subseteq A^c\cup B}(-1)^{|{B'}^{c}\setminus (A\setminus B)|}I_{-;B'}(z)\\
&=(-1)^{|A\setminus B|}\sum_{B\subseteq B'\subseteq A^c\cup B}(-1)^{|B'|}I_{-;B'}(z).
\end{align*}
\end{proof}
Note that these results directly come from the definition of $I_{A;B}$, so it does not depend on the logic system.

\begin{thm}
For any $A,B\subseteq\mathcal{VA}$, $I_{A;B}(z)$ is obtained by arithmetic operations and taking square roots. In particular, so is $W_A(z)$.
\end{thm}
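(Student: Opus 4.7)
The plan is to proceed by downward induction on $|B|$, or equivalently induction on $d := 2^m - |B|$. In the base case $d = 0$, i.e., $B = \mathcal{P}(X)$, the only admissible $A$ is $A = \emptyset$, and $I_{\emptyset;\mathcal{P}(X)} = W$ satisfies the quadratic equation $W = mz + zW + zW^2$ given earlier, whence $W$ is obtained by arithmetic and one square root.

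For the inductive step, fix $B$ with $|B| < 2^m$, assume the theorem for every $B'$ with $|B'| > |B|$, and pick an auxiliary reference set $A_0$ disjoint from $B$ (for concreteness, $A_0 = \emptyset$). By \corr{lincom}, for every $A$ with $A \cap B = \emptyset$ one can write
\[
I_{A;B}(z) = \epsilon_A\, I_{A_0;B}(z) + L_A(z), \qquad \epsilon_A \in \{+1,-1\},
\]
where each $L_A(z)$ is a fixed $\mathbb{Z}$-linear combination of generating functions $I_{C;B'}$ with $B \subsetneq B'$ and $|B'| = |B|+1$. By the inductive hypothesis, every such $I_{C;B'}$ lies in the class of expressions built from arithmetic and quadratic roots, and hence so does each $L_A$.

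Now substitute these expressions into the recursion of \propr{eqnAB} for the single index $A = A_0$. Because that recursion has total degree $2$ in the unknowns $I_{C;B}$, the substitution yields a relation
\[
\alpha(z)\, I_{A_0;B}(z)^2 + \beta(z)\, I_{A_0;B}(z) + \gamma(z) = 0,
\]
whose coefficients $\alpha, \beta, \gamma$ are algebraic in the $L_A$'s and hence in the target class. If $\alpha \equiv 0$, the equation is linear, and since the $-I_{A_0;B}$ transferred from the left-hand side contributes $-1$ to $\beta(0)$, dividing yields $I_{A_0;B} = -\gamma/\beta$ directly. Otherwise the quadratic formula recovers $I_{A_0;B}$ with at most one additional square root, the correct branch being pinned down by the boundary condition $I_{A_0;B}(0) = 0$. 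In either case $I_{A_0;B}$ lies in the target class, and then so does each $I_{A;B}$ via the linear identities above. Specializing $B = \emptyset$ and observing $I_A = I_{A;\emptyset}$ gives the in-particular clause.

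The main obstacle is really bookkeeping rather than analysis: one must confirm that the substitution produces a polynomial of degree $\leq 2$ in $I_{A_0;B}$. This follows because every quadratic term $z I_{C;B} I_{D;B}$ expands to $z(\epsilon_C I_{A_0;B} + L_C)(\epsilon_D I_{A_0;B} + L_D)$, so the $I_{A_0;B}^2$-coefficients are simply $z\epsilon_C \epsilon_D$ and all cross terms are absorbed into $\alpha, \beta, \gamma$. No analytic information about the $L_A$'s is required, since the statement concerns only algebraic form.
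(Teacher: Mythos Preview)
Your proof is correct and follows essentially the same downward induction on $|B|$ as the paper, using \corr{lincom} to reduce the recursion of \propr{eqnAB} to a single at-most-quadratic equation with known coefficients and verifying non-degeneracy via $\beta(0)=-1$ (the paper phrases this as ``the coefficient of $I_{A;B}$ is $1$ modulo $z$''). The only cosmetic difference is that you fix a single reference index $A_0=\emptyset$, solve one quadratic per level, and recover the remaining $I_{A;B}$ linearly, whereas the paper solves a separate quadratic for each $A$; both are equally valid.
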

\begin{proof}
We will induct on $|B|$ in reverse direction, from the largest to the smallest. If $|B|=|\mathcal{VA}|$ so $B=\mathcal{VA}$, then $I_{\emptyset;B}(z)=W(z)$ which is already known as a composition of arithmetic operations and taking square roots.

Now, assume that it holds for every $B$ with $|B|=n+1$. Then, for the case $|B|=n$, we have a system of equation
\begin{displaymath}
I_{A;B}(z)=\left|\{x\in X\mid F_x\in \mathcal{I}_{A;B}\}\right|z + zI_{A^c;B}(z) + \sum_{\underset{C\cap B=D\cap B=\emptyset}{C\setminus D=A\setminus B}}zI_{C;B}(z)I_{D;B}(z).
\end{displaymath}
Now, by \corr{lincom}, $I_{A^c;B}(z)$, $I_{C;B}(z)$, $I_{D;B}(z)$ are linear combinations of $I_{A;B}(z)$ and $I_{\bullet;B'}(z)$'s where $|B'|=n+1$. Thus, given equation is an at-most-quadratic equation for $I_{A;B}(z)$, and it is nontrivial since the coefficient of $I_{A;B}(z)$ is 1 modulo $z$. Thus, $I_{A;B}(z)$ is again, a composition of arithmetic operations and taking square roots. 
\end{proof}
Indeed, this theorem is also true for other propositional logic systems with at most 2-ary operators. Moreover, note that general systems of quadratic equations are not even solvable by radicals. For example, a system of equation
\begin{displaymath}
\left\{\begin{array}{l}
x = z^2\\
y = x^2\\
z = yz + \rho
\end{array}\right.
\end{displaymath}
is equivalent to the Bring-Jerrard quintic eqaution $z^5-z+\rho=0$, where it is well-known that there is no general solution using radicals for them.

Now, we will compute the exact value of the density of tautologies. At first, we will start from the following equation:
\begin{displaymath}
I_{-;B}=\left|\{x\mid F_{x}\in \mathcal{I}_{-;B}\}\right|z + zI_{\emptyset;B} + zI_{\emptyset;B}I_{-;B},
\end{displaymath}
where we have
\begin{displaymath}
I_{\emptyset;B}=\sum_{B\subseteq B'}(-1)^{|B'|}I_{-;B'}=(-1)^{|B|}I_{-;B}+\sum_{B\subsetneq B'}(-1)^{|B'|}I_{-;B'}.
\end{displaymath}

We will introduce the following definitions:
\begin{itemize}
\item $m_{-;B} := \left|\{x\mid F_{x}\in \mathcal{I}_{-;B}\}\right|$,
\item $\sigma_B := (-1)^{|B|}$,
\item $I_B^{\uparrow}(z):= \sum_{B\subsetneq B'}\sigma_{B'}I_{-;B'}$,
\item $\rho_0 := \frac{1}{2\sqrt{m}+1}$,
\item $\alpha_B := I_{-;B}(\rho_0)$,
\item $\alpha_B^{\uparrow} := I_B^{\uparrow}(\rho_0) = \sum_{B\subsetneq B'}\sigma_{B'}\alpha_{B'}$,
\item $\beta_B := 2\rho_0\lim_{z\to\rho_0^{-}}I_{-;B}'(z)\sqrt{1-\frac{z}{\rho_0}}$,
\item $\beta_B^{\uparrow} := \sum_{B\subsetneq B'}\sigma_{B'}\beta_{B'}$,
\item $D_B(z) := (1-(\sigma_B+I_B^{\uparrow})z)^2-4\sigma_Bz^2(m_{-;B}+I_B^{\uparrow})$.
\item $d_B := \frac{D_B(\rho_0)}{\rho_0^2} = (\frac{1}{\rho_0} - \sigma_B - \alpha_B^{\uparrow})^2 - 4\sigma_B(m_{-;B} + \alpha_B^{\uparrow})$.
\end{itemize}
Since $W(z)=I_{\emptyset;\mathcal{VA}}(z)$, by Szeg\H{o} lemma, we have
\begin{displaymath}
\lim_{n\to\infty}\frac{[z^n]I_{A;B}(z)}{[z^n]W(z)}=\frac{\sigma_{A}\sum_{B\subseteq B'\subseteq A^c}\sigma_{B'}\beta_{B'}}{\beta_{\mathcal{VA}}}
\end{displaymath}
for any disjoint $A,B\subseteq\mathcal{VA}$.

At first, the quadratic equation for $I_{-;B}$ can be simplified as
\begin{displaymath}
I_{-;B}=z(m_{-;B}+I_B^\uparrow) + z(\sigma_B + I_B^\uparrow)I_{-;B} + z\sigma_BI_{-;B}^2.
\end{displaymath}
Hence, we have
\begin{displaymath}
I_{-;B}=\frac{1-(\sigma_B+I_B^\uparrow)z-\sqrt{D_B(z)}}{(2\sigma_B)\,z},
\end{displaymath}
since $I_{-;B}(0)=0$. Then,
\begin{displaymath}
I_{-;B}'(z)=\frac{-(\sigma_B+I_B^\uparrow(z))-z{I_B^\uparrow}'(z)-\frac{D_B'(z)}{2\sqrt{D_B(z)}}}{(2\sigma_B)\,z}-\frac{I_{-;B}(z)}{z},
\end{displaymath}
where
\begin{displaymath}
D_B'(z)=-2(\sigma_B+I_B^\uparrow+z{I_B^\uparrow}')(1-(\sigma_B+I_B^\uparrow)z)-4\sigma_B z^2(m_{-;B}+{I_B^\uparrow}')-8\sigma_Bz(m_{-;B}+I_B^\uparrow).
\end{displaymath}

With these computations, we get the following:
\begin{align*}
\alpha_B &=\frac{2\sqrt{m}+1-\sigma_B-\alpha_B^\uparrow-\sqrt{d_B}}{2\sigma_B},\\
\beta_B &=\beta_B^{\uparrow}\frac{-1+\frac{2\sqrt{m}+1+\sigma_B-\alpha_B^{\uparrow}}{\sqrt{d_B}}}{2\sigma_B},
\end{align*}
where $\alpha_{\mathcal{VA}}=\sqrt{m}$ and $\beta_{\mathcal{VA}}=\sqrt{2m+\sqrt{m}}$. Here, $d_B=0$ only occurs when $\beta_B^\uparrow=0$, which is nothing but $B=\mathcal{VA}$. Here, for each $B\subseteq\mathcal{VA}$, we may use the binary representation to make a correspondence to an integer $b\in[0,2^{2^m})$. With, this representation, $B\subseteq B'$ implies $b\leq b'$, so we can run this algorithm by simple for-loops.

Finally, with these results, we can compute the density of tautologies
\begin{displaymath}
\lim_{n\to\infty}\frac{[z^n]I_{\emptyset;\emptyset}(z)}{[z^n]W(z)}=\frac{\sum_{B\subseteq\mathcal{VA}}\sigma_{B}\beta_{B}}{\sqrt{2m+\sqrt{m}}}
\end{displaymath}
algorithmically. As a corollary, the density is a constructible number. Following table gives its value for $m=1,2,3,4$.
\FloatBarrier
\begin{table}[h!]
\caption{The density of tautologies}
\begin{center}
\begin{tabular}{|c|c|}
\hline
$m=1$ & 0.42324.. \\
$m=2$ & 0.33213.. \\
$m=3$ & 0.27003.. \\
$m=4$ & 0.22561.. \\
\hline
\end{tabular}
\end{center}
\end{table}
\FloatBarrier

\section{Analytic Approach}\label{analytic}

From a quadratic equation for a generating function, to compute the $n$th degree coefficient, we need every informations from the constant term to the $(n-1)$th degree term. On the other hand, from the Drmota-Lalley-Woods theorem which gives
\begin{displaymath}
[z^n]y_i \simeq \frac{1}{n\sqrt{n}\rho^n}\sum_{k\geq 0}\frac{d_k}{n^k},
\end{displaymath}
$\frac{[z^n]I_{\emptyset;\emptyset}(z)}{[z^n]W(z)}$ converges to its limit in $O(\frac{1}{n})$ order, as shown in the following table.
\FloatBarrier
\begin{table}[h!]
\caption{Approximation to the density}
\begin{center}
\begin{tabular}{|c|c|c|c|c|}
\hline
 & true & $n=10$ & $n=50$ & $n=200$\\
\hline
$m=1$ & 0.4232 & 0.3102 & 0.4142 & 0.4210 \\
$m=2$ & 0.3321 & 0.2374 & 0.3206 & 0.3293 \\
$m=3$ & 0.2700 & 0.1913 & 0.2581 & 0.2670 \\
\hline
\end{tabular}
\end{center}
\end{table}
\FloatBarrier
Hence, if there is a way to compute more accurate approximate value with same information, it can work as a memory-time trade-off. This kind of convergence speed problem occurs for any general system of quadratic equations, not only for the system of quadratic equations for tautologies. Thus, we will build a theory which is generally applicable to systems of quadratic equations.

For a polynomial $h$, a power series $Y(z)=\sum_{n=0}^\infty Y_nz^n$ is \textbf{$h$-organized} if
\begin{itemize}
\item There exists a limit ratio $\lim_{n\to\infty}\frac{Y_{n+1}}{Y_n}=\frac{1}{\rho}>1$,
\item There exist a power series $f$ and a polynomial $g$ such that $Y(z)=f(z)+g(z)Y(z)+h(z)Y(z)^2$,
\item There exists a limit ratio $\lim_{n\to\infty}\frac{[z^n]f(z)}{[z^n]Y(z)}=\gamma$.
\end{itemize}
Here, we assume $g$, $h$ as polynomials, so they do not have their own independent combinatorial structures and only represent recursive relations in $Y$.

Now, suppose that power series $A_i(z)=\sum_{n=0}^\infty A_{i\,n}z^n$ for $i=1,\cdots, N$ satisfy the following system of at-most-quadratic relations
\begin{displaymath}
A_i(z)=f_i(z)+\sum_{j=1}^Ng_{ij}(z)A_j(z)+\sum_{j,k=1}^Nh_{ijk}(z)A_j(z)A_k(z),
\end{displaymath}
where each $f_i$ is a power series, and each $g_{ij}$, $h_{ijk}$ is a polynomial. This system of equations is \textbf{$(Y,h)$-organized} if
\begin{itemize}
\item There exist limit ratios $\lim_{n\to\infty}\frac{[z^n]f_i(z)}{[z^n]Y(z)}=\gamma_i$,
\item There exist limit ratios $\lim_{n\to\infty}\frac{[z^n]A_i(z)}{[z^n]Y(z)}=\beta_i$,
\item Each $h_{ijk}(z)$ is divisible by $h(z)$.
\end{itemize}
Note that for such system, we may add $Y(z)$ as $A_0(z)$ to the system freely. Moreover, we may deal with cubic terms $h_{ijkl}A_j A_k A_l$ if $h^2|h_{ijkl}$, by introducing $B_{jk}(z)=h(z)A_j(z)A_k(z)$.

Lastly, for any power series $F(z)=\sum_{n=0}^\infty F_nz^n$, define the \textbf{$s$-cut of $F$} as
\begin{displaymath}
F^{\leq s}(z)=\sum_{n=0}^s F_nz^n.
\end{displaymath}
This $s$-cut of a power series represents computed results for coefficients of $F$. Hence, for any value $r$, $F(r)$ is computable by using known informations saved in the memory.

With these definitions, we will try to compute an approximate value for $\beta_i$'s by $A_i^{\leq s}(z)$, $Y^{\leq s}(z)$. At first, we have
\begin{displaymath}
1=\frac{f_n}{Y_n}+\sum_{u=0}^{\deg g}g_u\frac{Y_{n-u}}{Y_n}+\sum_{u=0}^{\deg h}\sum_{v=0}^{n-u}h_u\frac{Y_v Y_{n-u-v}}{Y_n}.
\end{displaymath}
From this, we get
\begin{displaymath}
\sum_{u=0}^{\deg h}\sum_{v=s+1}^{n-u-s-1}h_u\frac{Y_v Y_{n-u-v}}{Y_n}=1-\frac{f_n}{Y_n}-\sum_{u=0}^{\deg g}g_u\frac{Y_{n-u}}{Y_n}-2\sum_{u=0}^{\deg h}\sum_{v=0}^{s}h_uY_v\frac{Y_{n-u-v}}{Y_n}.
\end{displaymath}
Here, as $n\to\infty$, $\frac{Y_{n-u}}{Y_n}\to\rho^u$, so we expect that
\begin{displaymath}
\sum_{u=0}^{\deg h}\sum_{v=s+1}^{n-u-s-1}h_u\frac{Y_v Y_{n-u-v}}{Y_n}\simeq 1-\gamma-g(\rho)-2h(\rho)Y^{\leq s}(\rho).
\end{displaymath}
Hence, we may define
\begin{displaymath}
\zeta_s := 1-\gamma-g(\rho)-2h(\rho)Y^{\leq s}(\rho)
\end{displaymath}
and
\begin{displaymath}
\zeta_\infty :=\lim_{s\to\infty}\zeta_s = 1-\gamma-g(\rho)-2h(\rho)Y(\rho).
\end{displaymath}
We will call this $\zeta_\infty$ as \textbf{the impurity of the equation $Y(z)=f(z)+g(z)Y(z)+h(z)Y(z)^2$}.

Then, for a $(Y,h)$-organized system of equations for $A_1,\cdots,A_N$, its \textbf{$s$-cut operator $C_s$} is the map \newline $(x_1,\cdots,x_N)\mapsto(c_1,\cdots,c_N)$ defined as
\begin{align*}
c_i = & \gamma_i +\sum_{j=1}^Ng_{ij}(\rho)x_j
+ \sum_{j,k=1}^Nh_{ijk}(\rho)\left(A_j^{\leq s}(\rho)x_k+A_k^{\leq s}(\rho)x_j\right) 
+ \sum_{j,k=1}^N\zeta_s\frac{h_{ijk}}{h}(\rho)x_jx_k.
\end{align*}
and $(\beta_{1}^{(s)},\cdots,\beta_{N}^{(s)})$ is an \textbf{$s$-cut solution} of the given system of equations if it is a fixed point of $C_s$.

\begin{thm}\label{convergence}
Let $h$ be a polynomial without negative coefficients, $Y$ be an $h$-organized power series without negative coefficients, and $Y(\rho)$ is bounded. Now, if $A_1,\cdots,A_N$ form a $(Y,h)$-organized system of equations, then, for the $s$-cut operator $C_s$, we have
\begin{displaymath}
(\beta_1,\cdots,\beta_N)=\lim_{s\to\infty}C_s(\beta_1,\cdots,\beta_N)
\end{displaymath}
\end{thm}
\begin{proof}
At first, we have
\begin{displaymath}
\lim_{n\to\infty}\sum_{u=0}^{\deg h}h_u\sum_{v=s+1}^{n-u-s-1}\frac{Y_vY_{n-u-v}}{Y_n} = 1-\gamma-g(\rho)-2h(\rho)Y^{\leq s}(\rho)=\zeta_s.
\end{displaymath}
Here, $|\zeta_s|\leq 1+\gamma+|g|(\rho)+2h(\rho)Y(\rho)<\infty$.

Now, let $C_s(\beta_1,\cdots,\beta_N)=(c_{1}^{(s)},\dots,c_{N}^{(s)})$. Then, from
\begin{displaymath}
\frac{A_{in}}{Y_n}=\frac{f_{in}}{Y_n}+\sum_{j=1}^n\sum_{u=0}^{\deg g_{ij}}g_{iju}\frac{[z^{n-u}]A_j}{Y_{n-u}}\frac{Y_{n-u}}{Y_n} + \sum_{j,k=1}^N\sum_{u=0}^{\deg h_{ijk}}h_{ijku}\sum_{v=0}^{n-u}\frac{[z^v]A_j\cdot [z^{n-u-v}]A_k}{Y_n}
\end{displaymath}
and
\begin{displaymath}
c_i^{(s)} = \gamma_i +\sum_{j=1}^Ng_{ij}(\rho)\beta_j + \sum_{j,k=1}^Nh_{ijk}(\rho)\left(A_j^{\leq s}(\rho)\beta_k+A_k^{\leq s}(\rho)\beta_j\right) + \sum_{j,k=1}^N\zeta_s\frac{h_{ijk}}{h}(\rho)\beta_j\beta_k,
\end{displaymath}
we get
\begin{displaymath}
\beta_i-c_{i}^{(s)}=\lim_{n\to\infty}\sum_{j,k=1}^N\sum_{t=0}^{\deg h_{ijk}/h}[z^t]\frac{h_{ijk}}{h}\cdot\sum_{u=0}^{\deg h}h_u\sum_{v=s+1}^{n-t-u-s-1}\Delta_{njktuv}
\end{displaymath}
where
\begin{displaymath}
\Delta_{njktuv}=\frac{[z^v]A_{j}\cdot[z^{n-t-u-v}]A_{k}}{Y_n}-\frac{\beta_j\beta_k Y_vY_{n-t-u-v}}{Y_n}.
\end{displaymath}
Now, for any $\epsilon>0$, choose $s$ so that $s\leq\min\{u,v\}$ implies $|\beta_j\beta_k-\frac{A_{jv}A_{ku}}{Y_uY_v}|<\epsilon$ for any $j,k$; and choose $n$ so that $n>2s+\deg h_{ijk} + \deg h$ for every $i,j,k$. With these choices, for any $v$ such that $s<v<n-t-u-s$,
\begin{align*}
|\Delta_{njktuv}|&=\left|\frac{A_{jv}A_{k,n-t-u-v}}{Y_n}-\frac{\beta_j\beta_k Y_vY_{n-t-u-v}}{Y_n}\right|\\
&\leq\epsilon\times\frac{Y_vY_{n-t-u-v}}{Y_{n-t}}\frac{Y_{n-t}}{Y_n}.
\end{align*}
Hence, if we apply $\lim_{n\to\infty}$, we get
\begin{displaymath}
|\beta_i-c_{i}^{(s)}|\leq\epsilon\sum_{j,k=1}^N\sum_{t=0}^{\deg h_{ijk}/h}\left|[z^t]\frac{h_{ijk}}{h}\right|\zeta_s \rho^t=\epsilon\zeta_s\sum_{j,k=1}^N\left|\frac{h_{ijk}}{h}\right|(\rho).
\end{displaymath}
Thus, $|\beta_i-c_{i}^{(s)}|\to 0$ as $s\to\infty$. Hence, $\beta=\lim_{s\to\infty}C_s(\beta)$.
\end{proof}

From here, without special mention, we will assume an $h$-organized $Y$ and a $(Y,h)$-organized system.

\begin{prop}\label{tsBound}
Suppose that
\begin{itemize}
\item all the coefficients of $Y,g,h$ are nonnegative,
\item $\gamma\geq 0$, or $h(0)=0$ but $h$ is nonzero,
\item $f(\rho),Y(\rho)$ converge.
\end{itemize}
Then we have
\begin{displaymath}
0\leq\sqrt{(1-g(\rho))^2-4f(\rho)h(\rho)}-\gamma\leq \zeta_s\leq 1-\gamma-g(\rho)
\end{displaymath}
and
\begin{displaymath}
\lim_{s\to\infty}\zeta_s =\sqrt{(1-g(\rho))^2-4f(\rho)h(\rho)}-\gamma=1-\gamma-g(\rho)-2h(\rho)Y(\rho).
\end{displaymath}
Moreover, if $f$ has no singularity in $\{z\in\mathbb{C}\mid |z|<\rho+\epsilon\}$ for some $\epsilon>0$, then both $\gamma$ and the impurity, $\zeta_\infty$, are zero.
\end{prop}
\begin{proof}
First, we have $\zeta_s=1-\gamma-g(\rho)-2h(\rho)Y^{\leq s}(\rho)\leq 1-\gamma-g(\rho)$. Moreover, 
\begin{displaymath}
\zeta_s = \lim_{n\to\infty}\sum_{u=0}^{\deg h}h_u\sum_{v=s+1}^{n-u-s-1}\frac{Y_vY_{n-u-v}}{Y_n}
\end{displaymath}
gives $\zeta_s\geq 0$ always.

Now, if $h=0$, then $\gamma\geq0$, so we have $1-g(\rho)\geq\gamma\geq 0$. Hence,
\begin{displaymath}
0\leq\zeta_s=1-\gamma-g(\rho)=\sqrt{(1-g(\rho))^2-4f(\rho)h(\rho)}-\gamma.
\end{displaymath}

For the case $h$ is nonzero, then we have
\begin{displaymath}
Y(z)=\frac{1-g(z)-\sqrt{(1-g(z))^2-4f(z)h(z)}}{2h(z)}.
\end{displaymath}
Since $f(\rho),Y(\rho)$ converge, it gives
\begin{displaymath}
Y(\rho)=\frac{1-g(\rho)-\sqrt{(1-g(\rho))^2-4f(\rho)h(\rho)}}{2h(\rho)}.
\end{displaymath}
Hence,
\begin{displaymath}
\zeta_s = 1-g(\rho)-2h(\rho)Y^{\leq s}(\rho)-\gamma\geq1-g(\rho)-2h(\rho)Y(\rho)-\gamma
\end{displaymath}
and
\begin{displaymath}
0\leq\lim_{s\to\infty}\zeta_s = 1-g(\rho)-2h(\rho)Y(\rho)-\gamma=\sqrt{(1-g(\rho))^2-4f(\rho)h(\rho)}-\gamma.
\end{displaymath}

Now, consider the case that $f$ has no singularity in $\{z\in\mathbb{C}\mid |z| < \rho+\epsilon\}$. By \textbf{Theorem IV.7} in \cite{Fl}, $\rho$ is the closest singularity to zero of $Y$. If $h$ is zero, then 
\begin{displaymath}
Y(z)=\frac{f(z)}{1-g(z)}.
\end{displaymath}
Since $f$ has no singularity in $\{z\in\mathbb{C}\mid |z| < \rho+\epsilon\}$, it means $g(\rho)=1$. Hence, $\zeta_s=1-\gamma-g(\rho)=-\gamma\leq 0$, so the impurity and $\gamma$ are zero.

For the case that $h$ is nonzero, $f$ has no singularity in $\{z\in\mathbb{C}\mid |z|< \rho+\epsilon\}$ and
$g,h$ are polynomials, so
\begin{displaymath}
(1-g(\rho))^2-4f(\rho)h(\rho)=0.
\end{displaymath}
Then, $1-g(\rho)-2h(\rho)Y(\rho)=0$ and $\lim_{s\to\infty}\zeta_s=-\gamma$. Hence, it is enough to prove that $\gamma=0$. This can be induced from again \textbf{Theorem IV.7} in \cite{Fl}, which gives $\limsup (f_n)^{1/n} \leq \frac{1}{\rho+\epsilon}$.
\end{proof}

Combining \thmr{convergence} and \propr{tsBound}, we directly obtain the following.
\begin{thm}\label{linearEquation}
Suppose that 
\begin{itemize}
\item coefficients of $Y,g,h$ are nonnegative
\item $h$ is nonzero and $h(0)=0$
\item $f(\rho)$, $Y(\rho)$, $A_1(\rho)$, $\cdots$, $A_N(\rho)$ converge
\item the impurity is zero.
\end{itemize}
Then,
\begin{align*}
\beta_i = &\gamma_i + \sum_{j=1}^Ng_{ij}(\rho)\beta_j
+ \sum_{j,k=1}^Nh_{ijk}(\rho)(A_j(\rho)\beta_k+A_k(\rho)\beta_j).
\end{align*}
\end{thm}
This theorem is also a variation of Szeg\H{o}'s lemma. Moreover, this is linear on $\beta_j$'s when $A_j(\rho)$'s are given, and linear on $A_j(\rho)$'s when $\beta_j$'s are given. Also, if $A_j(\rho)$ are given and $\gamma_i$'s are zero, then it is a homogeneous linear system on $\beta_j$'s. For this case, we need additional conditions to solve completely, such as $\sum_{i=1}^N\beta_i = 1$. Lastly, it is remarkable that with this result, we can compute the density of tautology only with $\alpha_B$ values.

From the equation $Y(z)=f(z)+g(z)Y(z)+h(z)Y(z)^2$, $g$, $h$ represent the recursive structure and $f$ represents basic elements. Hence, it is natural to find an equation with $\gamma=0$ for given power series $Y(z)$.
\begin{defs}~
\begin{enumerate}[label=(\alph*)]
\item If $\gamma\not=1$, the $\gamma-\widehat{\gamma}$ conversion of the equation $Y(z)=f(z)+g(z)Y(z)+h(z)Y(z)^2$ is defined as $Y(z)=\widehat{f}(z)+\widehat{g}(z)Y(z)+\widehat{h}(z)Y(z)^2$ where
\begin{align*}
\widehat{f}(z)&=\frac{1-\widehat{\gamma}}{1-\gamma}f(z) + \frac{\widehat{\gamma}-\gamma}{1-\gamma}Y(z),\\
\widehat{g}(z)&=\frac{1-\widehat{\gamma}}{1-\gamma}g(z),\\
\widehat{h}(z)&=\frac{1-\widehat{\gamma}}{1-\gamma}h(z).
\end{align*}
\item If $\delta(z)$ is a polynomial, the $\delta$ conversion is defined as $Y(z)=\widetilde{f}(z)+\widetilde{g}(z)Y(z)+\widetilde{h}(z)Y(z)^2$ where
\begin{align*}
\widetilde{f}(z)&=f(z) + \delta(z)Y(z),\\
\widetilde{g}(z)&=g(z)-\delta(z),\\
\widetilde{h}(z)&=h(z).
\end{align*}
\end{enumerate}
\end{defs}

\begin{prop}~
\begin{enumerate}[label=(\alph*)]
\item For the $\gamma-\widehat{\gamma}$ conversion, we have
\begin{displaymath}
\lim_{n\to\infty}\frac{\widehat{f}_n}{Y_n}=\widehat{\gamma}.
\end{displaymath}
\item For the $\gamma-\widehat{\gamma}$ conversion, $\frac{\zeta_s}{1-\gamma}$ is invariant. Moreover, if $Y(\rho)$ converges, then $\frac{\zeta_\infty}{1-\gamma}$ is invariant.
\item For the $\delta$ conversion, we have
\begin{displaymath}
\widetilde{\gamma}=\gamma+\delta(\rho).
\end{displaymath}
\item For the $\delta$ conversion, $\zeta_s$ is invariant and if $Y(\rho)$ converges, then $\zeta_\infty$ is invariant.
\end{enumerate}
\end{prop}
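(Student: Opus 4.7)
The plan is to verify all four parts by direct algebraic substitution, exploiting the fact that $\lim_{n\to\infty}Z_{n-u}/Z_n=r^u$ (which follows from the ratio hypothesis $Z_{n+1}/Z_n\to 1/r$) whenever $u$ is fixed. No new machinery is needed beyond what is already set up.

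For part (a), I would compute $[x^n]\widehat{f}$ directly from the definition
\[
\widehat{f}=\frac{1-\widehat{\gamma}}{1-\gamma}f+\frac{\widehat{\gamma}-\gamma}{1-\gamma}Z,
\]
divide by $Z_n$, and take the limit, using $f_n/Z_n\to\gamma$. The resulting expression simplifies, after combining the numerator, to $\widehat{\gamma}(1-\gamma)/(1-\gamma)=\widehat{\gamma}$. For part (c), I would write $\delta(x)=\sum_u d_u x^u$ as a finite sum (so that coefficient extraction and limits commute), expand $[x^n](\delta Z)=\sum_u d_u Z_{n-u}$, divide by $Z_n$, and use $Z_{n-u}/Z_n\to r^u$ to obtain $\widetilde{\gamma}=\gamma+\sum_u d_u r^u=\gamma+\delta(r)$.

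For parts (b) and (d), the key computation is substituting the converted data directly into the formula $\zeta_s=1-\gamma-g(r)-2h(r)Z^{\leq s}(r)$. For the $\gamma$-$\widehat{\gamma}$ conversion, the scaling factor $(1-\widehat{\gamma})/(1-\gamma)$ can be factored from the $\widehat{g}(r)$ and $\widehat{h}(r)$ terms, and after recognizing $(1-\gamma)-g(r)-2h(r)Z^{\leq s}(r)=\zeta_s$ inside the bracket, one gets $\widehat{\zeta}_s=\tfrac{1-\widehat{\gamma}}{1-\gamma}\zeta_s$, hence $\widehat{\zeta}_s/(1-\widehat{\gamma})=\zeta_s/(1-\gamma)$. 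For the $\delta$ conversion, the contributions $-\delta(r)$ from $\widetilde{\gamma}$ and $+\delta(r)$ from $-\widetilde{g}(r)=-(g(r)-\delta(r))$ cancel exactly, leaving $\widetilde{\zeta}_s=\zeta_s$. Passing to $s\to\infty$ under the convergence assumption on $Z(r)$ gives the statements about $\zeta_\infty$, since $Z^{\leq s}(r)\to Z(r)$.

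None of this is hard; the only point of care is that the polynomiality of $\delta$ in (c) is what allows the term-by-term limit of $\sum_u d_u Z_{n-u}/Z_n$. Everything else is a matter of writing out the substitutions and collecting terms.
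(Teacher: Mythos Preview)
Your proposal is correct and follows essentially the same approach as the paper: the paper dismisses (a) and (c) as ``simple computation'' and carries out (b) and (d) by the very substitution into $\zeta_s=1-\gamma-g(r)-2h(r)Z^{\leq s}(r)$ that you describe, obtaining $\widehat{\zeta}_s=\frac{1-\widehat{\gamma}}{1-\gamma}\zeta_s$ and $\widetilde{\zeta}_s=\zeta_s$ respectively. Your write-up just makes the ``simple computation'' for (a) and (c) explicit, which is fine.
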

\begin{proof}
(a),(c) are simple computation.

(b) We have
\begin{align*}
\widehat{\zeta}_s
&=1-\widehat{\gamma}-\widehat{g}(\rho)-2\widehat{h}(\rho)Y^{\leq s}(\rho)\\
&=\frac{1-\widehat{\gamma}}{1-\gamma}\left(1-\gamma-g(\rho)-2h(\rho)Y^{\leq s}(\rho)\right)\\
&=\frac{1-\widehat{\gamma}}{1-\gamma}\zeta_s.
\end{align*}

(d) We have
\begin{align*}
\widetilde{\zeta}_s
&=1-\widetilde{\gamma}-\widetilde{g}(\rho)-2\widetilde{h}(\rho)Y^{\leq s}(\rho)\\
&=1-\gamma-\delta(\rho)-g(\rho)+\delta(\rho)-2h(\rho)Y^{\leq s}(\rho)\\
&=\zeta_s.
\end{align*}
\end{proof}

Now, we are going to compute the numerical estimations of the ratio $\beta_i$'s by computing $s$-cut solutions, which means we expect that
\begin{displaymath}
\lim_{s\to\infty}\beta_{i}^{(s)}=\beta_i=\lim_{n\to\infty}\frac{A_{in}}{Y_n}
\end{displaymath}
is satisfied. Since the $s$-cut operator is quadratic, existence and uniqueness are not guaranteed. Hence, we will provide some conditions for existence, uniqueness and above convergence of the $s$-cut solution.

\begin{defs}
A $(Y,h)$-organized system $A_1,\cdots,A_N$ is a \textbf{natural partition of $Y$} if
\begin{align*}
Y(z) &= \sum_{i=1}^N A_i(z),\\
f(z) &= \sum_{i=1}^N f_i(z), \\
g(z) &= \sum_{i=1}^N g_{ij}(z), \\
2h(z) &= \sum_{i=1}^N(h_{ijk}(z) + h_{ikj}(z)).
\end{align*}
Also, a natural partition system is \textbf{nonnegative} if $Y$, $A_i$, $g$, $h$, $g_{ij}$'s $h_{ijk}$'s have no negative coefficients, and $\gamma, \gamma_i\geq 0$.
\end{defs}
Here, for a nonnegative natural partition system, we have $\gamma,\gamma_i\leq 1$ and $h_{ijk}(z)=c_{ijk}h(z)$ for some constant $c_{ijk}$.

\begin{prop}\label{FixedHyper}
Let $(c_1,\cdots,c_N)$ be a fixed point of the $s$-cut operator $C_s$ for a natural partition system
$(Y,A_1,\cdots,A_N)$ with nonzero $\zeta_s$. Then, $(c_1,\cdots,c_N)$ is on the hyperplane
$x_1+\cdots+x_N=\frac{\gamma}{\zeta_s}$ or $x_1+\cdots+x_N=1$ in $\mathbb{R}^N$.
\end{prop}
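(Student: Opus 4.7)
The plan is to sum the fixed-point equation $c_i = x_i$ over $i=1,\ldots,N$ and reduce everything to a single quadratic identity in $S := x_1+\cdots+x_N$. First, I would peel off the aggregates one at a time using the natural partition identities. The constant term gives $\sum_i \gamma_i = \gamma$, immediate from $f=\sum_i f_i$ together with the definitions of $\gamma$ and $\gamma_i$. The linear term collapses via
\[
\sum_{i,j} g_{ij}(r)\,x_j \;=\; \sum_j\Bigl(\sum_i g_{ij}(r)\Bigr)x_j \;=\; g(r)\,S,
\]
using the natural partition identity $\sum_i g_{ij}=g$ for each $j$.

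The quadratic blocks require a symmetrization step, which is the only genuinely fiddly part. Writing $H_{jk}(r) := \sum_i h_{ijk}(r)$, the cross term becomes
\[
\sum_{i,j,k} h_{ijk}(r)\bigl(A_j^{\leq s}(r)x_k + A_k^{\leq s}(r)x_j\bigr)
= \sum_{j,k}\bigl(H_{jk}(r)+H_{kj}(r)\bigr)\,A_j^{\leq s}(r)\,x_k,
\]
where I swap the roles of $j,k$ in the second summand before combining. The natural partition identity $H_{jk}+H_{kj}=2$ then reduces this to $2\bigl(\sum_j A_j^{\leq s}(r)\bigr)\bigl(\sum_k x_k\bigr) = 2\,Z^{\leq s}(r)\,S$, using $\sum_j A_j = Z$. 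The same symmetrization applied to $\sum_{i,j,k} h_{ijk}(r)\,x_j x_k$ yields $S^2$.

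Assembling the pieces, the fixed-point equation summed over $i$ reads
\[
S \;=\; \gamma + g(r)\,S + 2h(r)\,Z^{\leq s}(r)\,S + \zeta_s\,S^2.
\]
Substituting $\zeta_s = 1-\gamma-g(r)-2h(r)Z^{\leq s}(r)$, equivalently $g(r)+2h(r)Z^{\leq s}(r) = 1-\gamma-\zeta_s$, this rearranges to
\[
\zeta_s\,S^2 - (\zeta_s+\gamma)\,S + \gamma \;=\; 0,
\]
which factors cleanly as $(\zeta_s S - \gamma)(S-1) = 0$. Since $\zeta_s\neq 0$ by hypothesis, $S=\gamma/\zeta_s$ or $S=1$, proving the proposition. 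The only real obstacle to watch is the symmetrization in the quadratic terms: one must invoke the partition identity in its correct form $\sum_i(h_{ijk}+h_{ikj})=2$ after symmetrizing the double sum in $j,k$, rather than the naive (and generally false) $\sum_i h_{ijk}=1$.
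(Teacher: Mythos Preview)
Your proof is correct and follows essentially the same route as the paper: sum the fixed-point relations over $i$, collapse the constant, linear, cross, and quadratic blocks using the natural partition identities (with the same symmetrization trick for the $h_{ijk}$ terms), and arrive at the quadratic $(\zeta_s+\gamma)S=\gamma+\zeta_s S^2$. You go slightly further than the paper by explicitly factoring this as $(\zeta_s S-\gamma)(S-1)=0$, whereas the paper stops at the quadratic and declares the result.
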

\begin{proof}
\begin{align*}
\sum_{i=1}^N c_i
=& \sum_{i=1}^N\gamma_i + \sum_{i=1}^N\sum_{j=1}^N g_{ij}(\rho)c_j
+ \sum_{i=1}^N\sum_{j,k=1}^Nh_{ijk}(\rho)(A_j^{\leq s}(\rho)c_k+A_k^{\leq s}(\rho)c_j) \\
&\qquad\qquad+ \sum_{i=1}^N\sum_{j,k=1}^N\zeta_s\frac{h_{ijk}}{h}(\rho)c_jc_k\\
=&\gamma + \sum_{j=1}^Ng(\rho)c_j
+\frac{1}{2}\sum_{j,k=1}^N\sum_{i=1}^N (h_{ijk}(\rho)+h_{ikj}(\rho))(A_j^{\leq s}(\rho)c_k+A_k^{\leq s}(\rho)c_j)\\
&\qquad\qquad+\frac{1}{2}\sum_{j,k=1}^N\sum_{i=1}^N\zeta_s(\frac{h_{ijk}}{h}(\rho)+\frac{h_{ikj}}{h}(\rho))c_jc_k\\
=&\gamma+g(\rho)\sum_{j=1}^N c_j
+\sum_{j,k=1}^N h(\rho)(A_j^{\leq s}(\rho)c_k+A_k^{\leq s}(\rho)c_j)
+\sum_{j,k=1}^N\zeta_sc_jc_k\\
=&\gamma+g(\rho)\sum_{j=1}^Nc_j + 2h(\rho)Y^{\leq s}(\rho)\sum_{j=1}^Nc_j + \zeta_s\left(\sum_{j=1}^Nc_j\right)^2.
\end{align*}
Hence,
\begin{displaymath}
(\zeta_s+\gamma)\sum_{j=1}^Nc_j = \gamma + \zeta_s\left(\sum_{j=1}^Nc_j\right)^2,
\end{displaymath}
which proves the proposition.
\end{proof}

\begin{prop}
The $s$-cut operator $C_s$ of a nonnegative natural partition system has a fixed point in
\begin{displaymath}
H:=\{(x_1,\cdots,x_N)\in\mathbb{R}^N: 0\leq x_i\leq 1,\sum_{i=1}^N x_i = 1\}.
\end{displaymath}
\end{prop}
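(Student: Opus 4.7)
The plan is to apply Brouwer's fixed point theorem to the $(N-1)$-simplex $H$. Since $H$ is nonempty, compact, and convex in $\mathbb{R}^N$, and $C_s$ is a polynomial map hence continuous, it is enough to check that $C_s(H)\subseteq H$. This splits cleanly into two verifications: preservation of the affine sum condition, and preservation of nonnegativity of the coordinates.

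For the sum condition, I would repeat essentially verbatim the computation in the proof of \propr{FixedHyper}, but applied to an arbitrary input $x=(x_1,\ldots,x_N)$ rather than only to a fixed point. Writing $S=\sum_j x_j$ and using the natural-partition identities $\sum_i \gamma_i=\gamma$, $\sum_i g_{ij}(x)=g(x)$ for each $j$, and $\sum_i(h_{ijk}(x)+h_{ikj}(x))=2$ for each $(j,k)$, the same symmetrization that appears in \propr{FixedHyper} yields
$$\sum_{i=1}^N c_i = \gamma + g(r)\,S + 2h(r)Z^{\leq s}(r)\,S + \zeta_s\,S^{2}.$$
Inserting $S=1$ and recalling the definition $\zeta_s=1-\gamma-g(r)-2h(r)Z^{\leq s}(r)$ makes the right-hand side collapse to $1$, so $C_s$ carries the hyperplane $\sum x_i = 1$ into itself.

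For nonnegativity, I would observe that every numerical coefficient entering the formula for $c_i$ is nonnegative. The nonnegativity hypothesis of the partition system directly gives $\gamma_i\ge 0$, $g_{ij}(r)\ge 0$, $h(r)\ge 0$, $h_{ijk}(r)\ge 0$, and $A_j^{\leq s}(r)\ge 0$. The only factor whose sign is not immediately visible is $\zeta_s$, but the proof of \thmr{convergence} exhibited
$$\zeta_s = \lim_{n\to\infty}\sum_{u=0}^{\deg h}h_u\sum_{v=s+1}^{n-u-s-1}\frac{Z_vZ_{n-u-v}}{Z_n},$$
a limit of nonnegative quantities under the assumed nonnegativity of $h$ and $Z$ (alternatively, one can cite \propr{tsBound}). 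Hence $x_j\ge 0$ for all $j$ implies $c_i\ge 0$ for all $i$.

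Combining the two steps: if $x\in H$, then $C_s(x)$ has nonnegative coordinates summing to $1$, which forces each coordinate to lie in $[0,1]$, so $C_s(x)\in H$. Brouwer's fixed point theorem then delivers a fixed point of $C_s$ inside $H$. The only nontrivial content is the sum-preservation identity, and that calculation has already been carried out in \propr{FixedHyper}; the rest is bookkeeping plus an off-the-shelf topological result, so I do not expect a genuine obstacle.
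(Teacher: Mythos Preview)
Your proposal is correct and matches the paper's own proof essentially line for line: the paper also verifies $C_s(H)\subseteq H$ by invoking the summation identity from \propr{FixedHyper} with $S=1$, uses the nonnegativity of $\zeta_s$ (citing the proof of \propr{tsBound}) together with the nonnegativity hypotheses to get $c_i\ge 0$, deduces $c_i\le 1$ from $\sum_i c_i=1$, and then applies Brouwer's fixed point theorem to the compact convex set $H$.
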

\begin{proof}
Let $C_s(x_1,\cdots,x_N)=(c_1,\cdots,c_N)$. If $(x_1,\cdots,x_N)\in H$, then as in the proof of \propr{FixedHyper}, we have
\begin{displaymath}
\sum_{i=1}^Nc_i =\gamma+ g(\rho)\cdot 1 + 2h(\rho)Y^{\leq s}(\rho)\cdot 1 + \zeta_s\cdot 1^2 = 1.
\end{displaymath}
In the proof of \propr{tsBound}, we obtained $\zeta_s\geq 0$ from the fact that coefficients of $Y$ and $h$ are nonnegative. Hence, we have $c_i\geq 0$ for every $i$. Then, $\sum_{i=1}^Nc_i=1$ implies $c_i\leq 1$, so $(c_1,\cdots,c_N)\in H$.

Now, $H$ is a convex compact set in $\mathbb{R}^N$, so by the Brouwer fixed point theorem,
$C_s$ has a fixed point in $H$.
\end{proof}

By simple computation, we have
\begin{align*}
\frac{\partial c_i}{\partial x_j}
=&g_{ij}(r) + \sum_{k=1}^N(h_{ijk}(\rho)+h_{ikj}(\rho))A_k^{\leq s}(\rho) + \zeta_s\sum_{k=1}^N(\frac{h_{ijk}}{h}(\rho)+\frac{h_{ikj}}{h}(\rho))x_k.
\end{align*}
From this, we have the following result.
\begin{prop}
For the Jacobian $J$ of the $s$-cut operator $C_s$ of a nonnegative natural partition system,
\begin{displaymath}
\|J(x_1,\cdots,x_N)\|_1 = 1-\zeta_s-\gamma + 2\zeta_s\left(\sum_{i=1}^Nx_i\right)
\end{displaymath}
on $[0,\infty)^N$, where $\|\cdot\|_1$ denotes the $1$-norm of a matrix. In particular,
$\|J\|_1 = 1-\gamma+\zeta_s$ on $H$.
\end{prop}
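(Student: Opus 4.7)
The plan is to compute the 1-norm directly from the explicit Jacobian formula already displayed just before the statement. Recall that for a matrix $M$, $\|M\|_1 = \max_j \sum_i |M_{ij}|$ (the maximum column sum of absolute values). I will first exploit nonnegativity to drop the absolute values, then use the defining identities of a natural partition to collapse the sums column by column, and finally substitute the definition of $\zeta_s$.

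First, on $[0,\infty)^N$ every summand in the displayed formula for $\partial c_i/\partial x_j$ is nonnegative: $g_{ij}(r), h(r), h_{ijk}(r)$ and $A_k^{\leq s}(r)$ are nonnegative by the definition of a nonnegative natural partition system, $\zeta_s \ge 0$ by \propr{tsBound} (its proof is reproduced inside the previous proposition's argument), and $x_k \ge 0$ by assumption. Hence $|\partial c_i/\partial x_j| = \partial c_i/\partial x_j$, and the column sum is
\begin{align*}
\sum_{i=1}^N \frac{\partial c_i}{\partial x_j}
= \sum_{i=1}^N g_{ij}(r) + \sum_{i,k=1}^N h(r)\bigl(h_{ijk}(r)+h_{ikj}(r)\bigr)A_k^{\leq s}(r)
+ \zeta_s \sum_{i,k=1}^N \bigl(h_{ijk}(r)+h_{ikj}(r)\bigr) x_k.
\end{align*}

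Now I apply the defining identities of a natural partition. Summing the identity $g = \sum_i g_{ij}$ handles the first block, giving $g(r)$. For the second and third blocks, swapping the $i$-sum inside and using $\sum_{i=1}^N (h_{ijk}(x)+h_{ikj}(x)) = 2$ converts the double sums into
\[
2h(r) \sum_{k=1}^N A_k^{\leq s}(r) = 2h(r) Z^{\leq s}(r)
\qquad \text{and} \qquad
2\zeta_s \sum_{k=1}^N x_k,
\]
where I used $Z = \sum_i A_i$ (hence $Z^{\leq s} = \sum_i A_i^{\leq s}$) in the first. Crucially, the resulting column sum no longer depends on $j$, so the 1-norm equals this common value.

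Finally, substituting $\zeta_s = 1-\gamma - g(r) - 2h(r)Z^{\leq s}(r)$ yields $g(r) + 2h(r)Z^{\leq s}(r) = 1 - \gamma - \zeta_s$, and combining the three contributions gives
\[
\|J(x_1,\dots,x_N)\|_1 = 1 - \gamma - \zeta_s + 2\zeta_s \sum_{i=1}^N x_i,
\]
as claimed. On $H$ we have $\sum_i x_i = 1$, so this specializes to $1 - \gamma + \zeta_s$. There is no real obstacle here: the only thing to be careful about is that nonnegativity is genuinely needed to discard the absolute values (otherwise the column sum and the 1-norm could disagree), and that the partition identities are used in exactly the form in which they were defined, so the constants $g(r)$, $2h(r)Z^{\leq s}(r)$, and the factor $2$ in front of $\zeta_s \sum_k x_k$ all line up cleanly.
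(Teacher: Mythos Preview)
Your proof is correct and follows essentially the same approach as the paper's own proof: both drop the absolute values via nonnegativity, compute the column sum $\sum_i \partial c_i/\partial x_j$ using the natural partition identities $\sum_i g_{ij}=g$, $\sum_i(h_{ijk}+h_{ikj})=2$, and $\sum_k A_k^{\leq s}=Z^{\leq s}$, and then substitute the definition of $\zeta_s$. Your write-up is, if anything, slightly more explicit in noting that the column sum is independent of $j$ and in invoking \propr{tsBound} for $\zeta_s\ge0$.
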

\begin{proof}
Note that $\|B\|_1 = \max_{1\leq j\leq n}\sum_{i=1}^m|b_{ij}|$ for any $m\times n$ matrix $B$. Since the system is nonnegative, $\frac{\partial c_i}{\partial x_j}\geq 0$ on $[0,\infty)^N$. Then,
\begin{align*}
\sum_{i=1}^N\frac{\partial c_i}{\partial x_j}
 =& \sum_{i=1}^Ng_{ij}(\rho) + \sum_{k=1}^N\sum_{i=1}^N(h_{ijk}(\rho)+h_{ikj}(\rho))A_k^{\leq s}(\rho) \\
 &+ \zeta_s\sum_{k=1}^N\sum_{i=1}^N(\frac{h_{ijk}}{h}(\rho)+\frac{h_{ikj}}{h}(\rho))x_k\\
 =& g(\rho) + \sum_{k=1}^Nh(\rho)2A_{k}^{\leq s}(\rho) + \zeta_s\sum_{k=1}^N 2x_k\\
 =& g(\rho) + 2h(\rho)Y^{\leq s}(\rho) + 2\zeta_s\sum_{k=1}^N x_k\\
 =& 1-\zeta_s-\gamma+ 2\zeta_s\sum_{k=1}^N x_k,\\
\end{align*}
which proves the result.
\end{proof}

This result is also true when the nonnegativity condition is weakened: For instance, $\gamma$ and $\gamma_i$ may not be nonnegative. Moreover, we have the following result for general $p$-norms.
\begin{prop}
For the Jacobian $J$ of the $s$-cut operator $C_s$ of a natural partition system,
\begin{displaymath}
\|J(x_1,\cdots,x_N)\|_p \geq |1-\gamma+\zeta_s|
\end{displaymath}
on $H$. Note that $|1-\gamma+\zeta_s|=1-\gamma+\zeta_s$ when the given system is nonnegative, since we have $\gamma\leq 1$ and $\zeta_s\geq 0$.
\end{prop}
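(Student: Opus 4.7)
The plan is to observe that the column-sum identity underlying the previous proposition actually holds for any natural partition system, not just the nonnegative ones, and then to reinterpret this identity as an eigenvalue statement for $J^T$.

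First I would recompute $\sum_{i=1}^{N}\partial c_i/\partial x_j$ at an arbitrary point of $H$, using only the natural partition identities $\sum_i g_{ij}(r)=g(r)$ and $\sum_i(h_{ijk}(r)+h_{ikj}(r))=2$, together with the definition $\zeta_s=1-\gamma-g(r)-2h(r)Z^{\leq s}(r)$. Exactly as in the preceding proof, this yields
$$\sum_{i=1}^{N}\frac{\partial c_i}{\partial x_j}(x_1,\ldots,x_N)=1-\gamma-\zeta_s+2\zeta_s\sum_{k=1}^{N}x_k,$$
which specializes on $H$, where $\sum_k x_k=1$, to the single value $1-\gamma+\zeta_s$, independent of $j$. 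Crucially, nonnegativity is not used to derive this identity; it was only used in the previous proposition to conclude that $|\partial c_i/\partial x_j|=\partial c_i/\partial x_j$, which allowed the column sums to be identified with absolute column sums and hence with $\|J\|_1$.

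The next step is to translate the column-sum equality into an eigenvalue statement. The fact that every column of $J(x_1,\ldots,x_N)$ sums to the same value $1-\gamma+\zeta_s$ is precisely the assertion
$$J(x_1,\ldots,x_N)^T\,\mathbf{1}=(1-\gamma+\zeta_s)\,\mathbf{1},$$
where $\mathbf{1}\in\mathbb{R}^N$ denotes the all-ones vector. Hence $1-\gamma+\zeta_s$ is an eigenvalue of $J^T$, and because a matrix and its transpose share the same spectrum, it is also an eigenvalue of $J$ itself (with some possibly different eigenvector).

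Finally, I would invoke the standard inequality $\|A\|_p\geq\rho(A)$ valid for every operator $p$-norm, which is immediate from the observation that an eigenvector $v$ with $Av=\lambda v$ gives $\|Av\|_p=|\lambda|\,\|v\|_p$ and hence $\|A\|_p\geq|\lambda|$. Applied to $J$ and the eigenvalue $1-\gamma+\zeta_s$, this delivers the desired bound $\|J(x_1,\ldots,x_N)\|_p\geq|1-\gamma+\zeta_s|$ on $H$. The main obstacle, if any, is really just the bookkeeping check that the column-sum identity survives when the nonnegativity assumptions are dropped; since the identities defining a natural partition are polynomial equalities valid irrespective of sign, this check goes through without difficulty.
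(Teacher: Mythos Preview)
Your proposal is correct and follows essentially the same route as the paper's proof: compute the column sums of $J$ using only the natural partition identities to obtain $J^T\mathbf{1}=(1-\gamma+\zeta_s)\mathbf{1}$ on $H$, deduce that $1-\gamma+\zeta_s$ is an eigenvalue of $J$, and conclude via the operator-norm bound on eigenvalues. Your additional remark that nonnegativity was only needed in the preceding proposition to identify column sums with absolute column sums is exactly the right observation.
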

\begin{proof}
Let $J^T$ denote the transpose of the Jacobian. We have
\begin{displaymath}
J^T\begin{bmatrix}1 \\ \vdots \\ 1\end{bmatrix}=(1-\gamma+\zeta_s)\begin{bmatrix}1 \\ \vdots \\ 1\end{bmatrix}
\end{displaymath}
on $H$, from $\sum_{i=1}^N\frac{\partial c_i}{\partial x_j}=1-\zeta_s-\gamma+ 2\zeta_s\sum_{k=1}^N x_k = 1-\gamma +\zeta_s$. Hence, $1-\gamma+\zeta_s$ is an eigenvalue of $J^T$,
so is an eigenvalue of $J$. Thus, we get $\|J\|_p\geq |1-\gamma+\zeta_s|$.
\end{proof}

Since the norm of the Jacobian of the $s$-cut operator $C_s$ can be larger than 1, especially when $\gamma=0$, this fact may cause some convergence issues when we try to find an $s$-cut solution by applying the fixed point iteration method on $C_s$. Hence, we may consider the following modification.
\begin{defs}
The $\sigma$-shifted $s$-cut operator $\widetilde{C_s^\sigma}$ is defined as
\begin{displaymath}
\widetilde{C_s^\sigma}(x)=C_s(x)-\sigma\left(\sum_{i=1}^Nx_i-1\right)\cdot (1,1,\cdots,1).
\end{displaymath}
\end{defs}
Since $\widetilde{C_s^\sigma}(x)=C_s(x)$ for all $x\in H$, fixed points of $C_s$ on $H$ are fixed points of
$\widetilde{C_s^\sigma}$. Moreover,
\begin{displaymath}
\widetilde{J}=J-\begin{bmatrix}\sigma & \sigma & \cdots & \sigma \\\sigma &\ddots& \cdots &\vdots \\
\vdots & \vdots & \ddots & \vdots\\ \sigma & \cdots & \cdots & \sigma\end{bmatrix}=J-\sigma\mathbf{1},
\end{displaymath}
where $\widetilde{J}$ is the Jacobian for $\widetilde{C_s^\sigma}$.

From the Banach contraction principle, we deduce the following.
\begin{prop}
If the Jacobian $\widetilde{J}$ of $\widetilde{C_s^\sigma}$ satisfies $\|\widetilde{J}\|<1$ for a matrix norm $\|\cdot\|$ on $H$, then $\widetilde{C_s^\sigma}$ is a contraction on $H$, and $C_s$ has the unique fixed point on $H$.
\end{prop}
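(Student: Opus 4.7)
The plan is to invoke the Banach contraction principle on the compact convex set $H$. First, I would verify that $\widetilde{C_s^\sigma}$ maps $H$ into itself. For any $x\in H$ we have $\sum_{i=1}^N x_i=1$, so the correction term $\sigma(\sum_i x_i-1)(1,\dots,1)$ vanishes, giving $\widetilde{C_s^\sigma}(x)=C_s(x)$. By the preceding proposition (whose proof actually established $C_s(H)\subseteq H$, not merely existence of a fixed point), we have $C_s(x)\in H$, so $\widetilde{C_s^\sigma}(H)\subseteq H$.

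Next, I would upgrade the hypothesis $\|\widetilde{J}\|<1$ to a uniform Lipschitz constant. The Jacobian $\widetilde{J}$ has polynomial entries in $x_1,\dots,x_N$ (since $C_s$ is itself a polynomial of degree $\leq 2$ and the shift by $\sigma\mathbf{1}$ is affine), so $x\mapsto\|\widetilde{J}(x)\|$ is continuous on $\mathbb{R}^N$. As $H$ is compact, the supremum $L:=\sup_{x\in H}\|\widetilde{J}(x)\|$ is attained and, under the hypothesis, satisfies $L<1$. Interpreting $\|\cdot\|$ as the operator norm induced by some vector norm $|\cdot|$, convexity of $H$ lets me apply the fundamental theorem of calculus along the segment from $y$ to $x$:
\begin{displaymath}
\widetilde{C_s^\sigma}(x)-\widetilde{C_s^\sigma}(y)=\int_0^1 \widetilde{J}\bigl(y+t(x-y)\bigr)(x-y)\,dt,
\end{displaymath}
and bounding under the integral yields $|\widetilde{C_s^\sigma}(x)-\widetilde{C_s^\sigma}(y)|\leq L\,|x-y|$. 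Thus $\widetilde{C_s^\sigma}$ is an $L$-contraction on the complete metric space $H$.

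Banach's contraction principle then produces a unique fixed point $x^*\in H$ of $\widetilde{C_s^\sigma}$. Since $\widetilde{C_s^\sigma}=C_s$ on $H$, this $x^*$ is a fixed point of $C_s$, and any other fixed point of $C_s$ in $H$ would also be a fixed point of $\widetilde{C_s^\sigma}$ in $H$, contradicting uniqueness. Hence $C_s$ has a unique fixed point on $H$.

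The main obstacle I anticipate is the tension between the pointwise hypothesis $\|\widetilde{J}\|<1$ on $H$ and the uniform constant $L<1$ required for a genuine contraction; this is resolved cleanly by the compactness of $H$ together with continuity of $\widetilde{J}$, provided the matrix norm is a continuous function of the entries (which is the case for any norm on the finite-dimensional space of $N\times N$ matrices). A minor but worth-mentioning technicality is the implicit assumption that $\|\cdot\|$ is an operator norm compatible with a vector norm on $\mathbb{R}^N$, so that the integral estimate above is valid; if the stated matrix norm is not of this form, one can replace it by an equivalent operator norm, since $L<1$ is an open condition and norms on finite-dimensional spaces are mutually equivalent.
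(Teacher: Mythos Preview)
Your argument is correct and follows exactly the route the paper intends: invoke the Banach contraction principle on the compact convex $H$, using compactness to pass from the pointwise bound $\|\widetilde{J}\|<1$ to a uniform Lipschitz constant $L<1$ (the paper itself remarks on precisely this point right after the statement), and then identify fixed points of $\widetilde{C_s^\sigma}$ with those of $C_s$ via $\widetilde{C_s^\sigma}|_H=C_s|_H$.

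One small caveat: your final parenthetical about handling a matrix norm that is not an operator norm does not work as written---equivalence of norms does not preserve the condition ``$<1$'', so you cannot simply swap to an equivalent operator norm. In context the paper only ever uses induced (operator) $p$-norms, so the hypothesis should be read that way; your main argument already does this correctly.
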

Note that since $H$ is compact, $\|\widetilde{J}\|<1$ is enough to apply the Banach contraction principle rather than the condition that there exists $K<1$ such that $\|\widetilde{J}\|\leq K$. From $\|A-B\|\geq | \|A\|-\|B\| |$, it would be best to choose $\sigma$ satisfying $\|J\|=\|\sigma\mathbf{1}\|$, and one of such choice is $\sigma=\frac{1-\gamma+\zeta_s}{N}$, which is from the 1-norm. Hence, we will call the $s$-cut operator shifted by this value as the \textbf{standard shifted $s$-cut operator}.
\begin{cor}
The Jacobian $\tilde{J}$ of the standard shifted $s$-cut operator $\widetilde{C_s}$ of a nonnegative natural partition system satisfies $\|\tilde{J}\|_1  < 1$ on $H$ if $1-2\gamma+2\zeta_s>0$ and
\begin{displaymath}
\frac{\partial c_i}{\partial x_j} < \frac{1-\gamma+\zeta_s}{N}+\max\left\{\frac{1-\gamma+\zeta_s}{N(1-2\gamma+2\zeta_s)},\frac{1}{2(N-1)}\right\}.
\end{displaymath}
Note that $1-2\gamma+2\zeta_s\leq 0$ implies $1-\gamma+\zeta_s\leq\frac{1}{2}$ , which means $\|J\|_1<1$ is already satisfied without shifting.
\end{cor}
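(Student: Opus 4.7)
The plan is to verify $\|\tilde{J}\|_1<1$ on $H$ by bounding each column sum $\sum_{i=1}^N|\tilde{J}_{ij}|$ separately, using the column-sum identity from the preceding proposition. With $\sigma=\frac{1-\gamma+\zeta_s}{N}$, the entries are $\tilde{J}_{ij}=J_{ij}-\sigma$, and the preceding proposition gives every column sum of $J$ on $H$ equal to $1-\gamma+\zeta_s=N\sigma$, so $\sum_{i=1}^N(J_{ij}-\sigma)=0$. Splitting the row indices for column $j$ into $S_j^+=\{i:J_{ij}>\sigma\}$ and $S_j^-=\{i:J_{ij}\leq\sigma\}$, the vanishing of the signed sum upgrades to
\[
\sum_{i=1}^N|J_{ij}-\sigma|=2\sum_{i\in S_j^+}(J_{ij}-\sigma),
\]
so the task reduces to showing $\sum_{i\in S_j^+}(J_{ij}-\sigma)<\tfrac12$ strictly for every $j$.

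Let $k=|S_j^+|$ and set $M_1=\frac{1-\gamma+\zeta_s}{N(1-2\gamma+2\zeta_s)}$ and $M_2=\frac{1}{2(N-1)}$. Two independent bounds govern the one-sided sum. First, the hypothesis gives the strict entrywise estimate $J_{ij}-\sigma<\max\{M_1,M_2\}$, so $\sum_{i\in S_j^+}(J_{ij}-\sigma)<k\max\{M_1,M_2\}$. Second, nonnegativity of the system yields $\sum_{i\in S_j^-}J_{ij}\geq0$, and combined with $\sum_i(J_{ij}-\sigma)=0$ this gives
\[
\sum_{i\in S_j^+}(J_{ij}-\sigma)=\sum_{i\in S_j^-}(\sigma-J_{ij})\leq(N-k)\sigma.
\]
Not all $J_{ij}$ can exceed $\sigma$ (else the column sum would exceed $N\sigma$), so $k\leq N-1$.

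Now I dispatch the two cases by whichever of $M_1,M_2$ achieves the maximum. If $M_2\geq M_1$, the entrywise bound alone suffices: $\sum_{i\in S_j^+}(J_{ij}-\sigma)<kM_2\leq(N-1)M_2=\tfrac12$. If $M_1>M_2$, I combine both bounds as $\sum_{i\in S_j^+}(J_{ij}-\sigma)<\min(kM_1,(N-k)\sigma)$; viewing the right side as a real-valued function of $k\in[0,N]$, its maximum occurs at the balance point $k=N\sigma/(M_1+\sigma)$ with value $\frac{M_1N\sigma}{M_1+\sigma}$. Using the form $M_1=\sigma/(2N\sigma-1)$, which follows from $N\sigma=1-\gamma+\zeta_s$ and the hypothesis $1-2\gamma+2\zeta_s>0$ (so that the denominator is positive and $M_1>0$), this quantity simplifies directly to $\tfrac12$. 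Strictness of the first bound propagates, so $\sum_{i\in S_j^+}(J_{ij}-\sigma)<\tfrac12$ in either branch, giving $\|\tilde{J}\|_1<1$.

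The main obstacle is the algebraic identity $\frac{M_1N\sigma}{M_1+\sigma}=\tfrac12$, which is precisely what forces the particular shape of $M_1$ in the hypothesis and explains why the assumption $1-2\gamma+2\zeta_s>0$ is exactly what is needed. The closing parenthetical remark is then immediate: $1-2\gamma+2\zeta_s\leq0$ rearranges to $1-\gamma+\zeta_s\leq\tfrac12$, whence the preceding proposition already yields $\|J\|_1=1-\gamma+\zeta_s\leq\tfrac12<1$ on $H$ with no shift required.
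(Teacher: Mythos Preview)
Your proof is correct and follows essentially the same approach as the paper: both reduce to showing the one-sided column excess $\sum_{i\in S_j^+}(J_{ij}-\sigma)<\tfrac12$, use the same two bounds $kM$ and $(N-k)\sigma$, and split on which of $M_1,M_2$ dominates. Your balance-point formulation $k^*=\frac{N\sigma}{M_1+\sigma}$ is exactly the paper's threshold $N\bigl(1-\tfrac{1}{2(1-\gamma+\zeta_s)}\bigr)$, so the arguments coincide; the only imprecision is the sentence ``strictness of the first bound propagates'' --- when $k>k^*$ strictness actually comes from $(N-k)\sigma<(N-k^*)\sigma=\tfrac12$, not from the entrywise bound --- but the conclusion is unaffected.
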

\begin{proof}
Since $\|\widetilde{J}\|_1=\max\left\{\sum_{i=1}^N\left|\frac{\partial c_i}{\partial x_j}-\frac{1-\gamma+\zeta_s}{N}\right|\mid j=1,\cdots,N\right\}$, and we have $\sum_{i=1}^N\frac{\partial c_i}{\partial x_j}=1-\gamma+\zeta_s$ already, it is enough to prove that $a_i$s arranged as $\max\{\frac{1-\gamma+\zeta_s}{N(1-2\gamma+2\zeta_s)},\frac{1}{2(N-1)}\} + \frac{1-\gamma+\zeta_s}{N}> a_1\geq a_2\geq\cdots \geq a_m\geq\frac{1-\gamma+\zeta_s}{N}\geq a_{m+1}\geq\cdots\geq a_N\geq 0$ satisfying $\sum_{i=1}^Na_i=1-\gamma+\zeta_s$ satisfies $\sum_{i=1}^N|a_i-\frac{1-\gamma+\zeta_s}{N}|<1$. Since $\frac{1-\gamma+\zeta_s}{N}$ is the mean of $a_i$s, we may assume $m<N$. Easily,
\begin{align*}
\sum_{i=1}^N\left|a_i-\frac{1-\gamma+\zeta_s}{N}\right| &= \sum_{i=1}^m(a_i-\frac{1-\gamma+\zeta_s}{N}) + \sum_{i=m+1}^N(\frac{1-\gamma+\zeta_s}{N}-a_i)\\
&=\frac{1-\gamma+\zeta_s}{N}(N-2m) + \sum_{i=1}^ma_i - \sum_{i=m+1}^N a_i\\
&=\frac{1-\gamma+\zeta_s}{N}(N-2m) + 2\sum_{i=1}^ma_i - (1-\gamma+\zeta_s)\\
&= 2\sum_{i=1}^ma_i-\frac{2m}{N}(1-\gamma+\zeta_s).
\end{align*}
If $\frac{1}{2(N-1)}\geq\frac{1-\gamma+\zeta_s}{N(1-2\gamma+2\zeta_s)}$, we have $a_i < \frac{1}{2(N-1)} + \frac{1-\gamma+\zeta_s}{N}$, so
\begin{align*}
2\sum_{i=1}^ma_i&-\frac{2m}{N}(1-\gamma+\zeta_s)
\\&<2m(\frac{1}{2(N-1)}+\frac{1-\gamma+\zeta_s}{N})-\frac{2m}{N}(1-\gamma+\zeta_s)=\frac{m}{N-1}\leq 1.
\end{align*}
For the other case, if $m> N\left(1-\frac{1}{2(1-\gamma+\zeta_s)}\right)$, we have
\begin{align*}
2\sum_{i=1}^ma_i-\frac{2m}{N}(1-\gamma+\zeta_s)
&\leq 2\sum_{i=1}^Na_i - \frac{2m}{N}(1-\gamma+\zeta_s)\\
&\leq 2(1-\gamma+\zeta_s)-\frac{2m}{N}(1-\gamma+\zeta_s)\\
&=2(1-\gamma+\zeta_s)(1-\frac{m}{N})<1,
\end{align*}
and if $m\leq N(1-\frac{1}{2(1-\gamma+\zeta_s)})$, we have $a_i < \frac{1-\gamma+\zeta_s}{N(1-2\gamma+2\zeta_s)} + \frac{1-\gamma+\zeta_s}{N}$, so
\begin{align*}
2\sum_{i=1}^ma_i&-\frac{2m}{N}(1-\gamma+\zeta_s)
\\&<2m\left(\frac{1-\gamma+\zeta_s}{N}+\frac{1-\gamma+\zeta_s}{N(1-2\gamma+2\zeta_s)}\right)-\frac{2m}{N}(1-\gamma+\zeta_s)\\
&=\frac{m}{N}\frac{2(1-\gamma+\zeta_s)}{1-2\gamma+2\zeta_s}\\
&\leq\left(1-\frac{1}{2(1-\gamma+\zeta_s)}\right)\frac{2(1-\gamma+\zeta_s)}{1-2\gamma+2\zeta_s}=1.
\end{align*}
\end{proof}
This corollary gives a condition to have the unique $s$-cut solution by computing the 1-norm of the shifted $s$-cut operator. Finding the best choice to shift based on the matrix 1-norm of the Jacobian is equivalent to find $\sigma$ from given nonnegative sequences $a^{(1)},\cdots,a^{(N)}$ satisfying $\sum_i a_i^{(1)}=\cdots=\sum_i a_i^{(N)}$ such that minimizes the $\max\left\{\sum_{i}\left|a_i^{(j)}-\sigma\right|\mid j=1,\cdots, N\right\}$. For each $j$, it is well-known that the median minimizes $\sum_i\left|a_i^{(j)}-\sigma\right|$, compare with that mean minimizes $\sum_i(a_i^{(j)}-\sigma)^2$, where the standard shift operator is defined as to choose $\sigma$ as the mean, which is easier to compute than the median. Hence, it may be possible to refine the condition to have unique $s$-cut solution by considering the median rather than the mean. In such case, we may have to use some variant of the iteration method. For example, we may use different iteration functions for each iteration.

Lastly, we will prove the following, to understand $s$-cut solution as an approximation.
\begin{thm}
Suppose that a nonnegative natural partition system have a common contraction factor $K<1$ and a sequence of proper shifting factor $\{\sigma_s\}$ of
the $s$-cut operator $C_s$ satisfying $\left|\widetilde{C_s^{\sigma_s}}(x)-\widetilde{C_s^{\sigma_s}}(y)\right|\leq K|x-y|$ for any $x,y\in H$ except for finitely many $s$.

Then, there exists a sequence of $s$-cut solutions on $H$, $\beta^{(s)}=(\beta_1^{(s)},\cdots,\beta_N^{(s)})$, converging to $\beta=(\beta_1,\cdots,\beta_N)$ as $s\to\infty$.
\end{thm}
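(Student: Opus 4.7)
The plan is to combine the Banach contraction principle with \thmr{convergence} via a standard perturbation argument for fixed points of a family of contractions. Because the shift term $-\sigma_s\bigl(\sum_i x_i-1\bigr)(1,\ldots,1)$ vanishes whenever $\sum_i x_i=1$, the operators $\widetilde{C_s^{\sigma_s}}$ and $C_s$ agree on $H$, so fixed points of $\widetilde{C_s^{\sigma_s}}$ in $H$ are exactly $s$-cut solutions in $H$. Therefore, for all but finitely many $s$, the contraction hypothesis together with compactness of $H$ yields, via Banach's fixed-point theorem, a unique fixed point $\beta^{(s)}\in H$; this produces the desired sequence of $s$-cut solutions.

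Next I would verify that the target $\beta$ lies in $H$, so that the contraction estimate can be applied to both $\beta^{(s)}$ and $\beta$. Nonnegativity of the system gives $\beta_i\geq 0$, and the natural partition identity $Z=\sum_{i}A_i$ yields $Z_n=\sum_{i}A_{i\,n}$ for every $n$; dividing by $Z_n$ and letting $n\to\infty$ gives $\sum_i\beta_i=1$. Hence $\beta\in H$, and in particular $\widetilde{C_s^{\sigma_s}}(\beta)=C_s(\beta)$.

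To deduce $\beta^{(s)}\to\beta$, I would use the fixed-point identity $\beta^{(s)}=\widetilde{C_s^{\sigma_s}}(\beta^{(s)})$ together with the contraction bound to obtain
\begin{displaymath}
\bigl|\beta^{(s)}-\beta\bigr|\leq\bigl|\widetilde{C_s^{\sigma_s}}(\beta^{(s)})-\widetilde{C_s^{\sigma_s}}(\beta)\bigr|+\bigl|C_s(\beta)-\beta\bigr|\leq K\bigl|\beta^{(s)}-\beta\bigr|+\bigl|C_s(\beta)-\beta\bigr|,
\end{displaymath}
so that
\begin{displaymath}
\bigl|\beta^{(s)}-\beta\bigr|\leq\frac{1}{1-K}\bigl|C_s(\beta)-\beta\bigr|.
\end{displaymath}
The right-hand side tends to $0$ by \thmr{convergence} applied to $\beta$, which gives $\lim_{s\to\infty}C_s(\beta)=\beta$ directly.

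The main obstacle is checking that the hypotheses of \thmr{convergence} are genuinely in force here: that theorem requires the coefficients of $Z$ and $h$ to be nonnegative and $Z(r)$ to be bounded, on top of the ratio limits. The first is built into the nonnegativity of the natural partition system, but boundedness of $Z(r)$ is not formally implied by $\lim Z_{n+1}/Z_n=1/r$ alone; in the quadratic setting with nonzero $h$ it follows from the Drmota--Lalley--Woods framework, since $Z$ acquires only a square-root singularity at $r$, but the cleanest route is either to include $Z(r)<\infty$ as an implicit standing assumption of the section or to add it explicitly in the hypotheses of the theorem. With that caveat, the two-line perturbation argument above completes the proof.
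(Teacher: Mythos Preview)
Your proof is correct and follows essentially the same route as the paper: invoke \thmr{convergence} to get $C_s(\beta)\to\beta$, then use the contraction bound on $H$ (where $\widetilde{C_s^{\sigma_s}}=C_s$) to obtain $|\beta^{(s)}-\beta|\leq(1-K)^{-1}|C_s(\beta)-\beta|\to 0$. You are in fact slightly more careful than the paper in two places: you explicitly verify $\beta\in H$ from $Z_n=\sum_i A_{i\,n}$, and you flag that \thmr{convergence} requires $Z(r)<\infty$, a hypothesis the theorem statement does not list and the paper's own proof silently assumes.
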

\begin{proof}
From \thmr{convergence}, $\lim_{s\to\infty}C_s(\beta)=\beta$ is satisfied. We may assume that $s$ is large enough to have a common contraction constant $K$. Then, we have
\begin{displaymath}
|\beta-\beta^{(s)}|\leq |\beta-C_s(\beta)|+|C_s(\beta)-\beta^{(s)}|=|\beta-C_s(\beta)|+|C_s(\beta)-C_s(\beta^{(s)})|.
\end{displaymath}
Since $C_s=\widetilde{C_s^\sigma}$ on $H$, $C_s$ is also a contraction on $H$ with the same contraction constant. Hence,
\begin{displaymath}
|\beta-\beta^{(s)}|\leq |\beta-C_s(\beta)|+|C_s(\beta)-C_s(\beta^{(s)})|\leq|\beta-C_s(\beta)|+K|\beta-\beta^{(s)}|.
\end{displaymath}
Thus,
\begin{displaymath}
|\beta-\beta^{(s)}|\leq\frac{1}{1-K}|\beta-C_s(\beta)|\to0
\end{displaymath}
as $s\to\infty$. Note that except for finitely many $s$'s, each $\beta^{(s)}$ is uniquely determined.
\end{proof}

The following table compares ratios at $s$ and $s$-cut solutions for the density of tautologies with one variable, which is $0.4232385...$.
\FloatBarrier
\begin{table}[h!]
\caption{Comparison of simple approximations and $s$-cuts for the one variable case}
\begin{center}
\begin{tabular}{|c|cc|}
\hline
 & ratio & $s$-cut\\
\hline
$s=10$ & 0.3101796...  & 0.4242620...\\
$s=100$ & 0.4187317... & 0.4232740...\\
$s=1000$ & 0.4227880... & 0.4232396...\\
$s=10000$ & 0.4231935... & 0.4232386...\\
\hline
\end{tabular}
\end{center}
\end{table}
\FloatBarrier
Hence, the $s$-cut solution seems to work as a memory-time trade-off to compute an approximation.

\section{Asymptotic Behavior}\label{asymptotic}

Even though we have a general method to compute the exact density of tautology, it is hard to compute for the case $m\geq 5$ since we need at least $2^{2^m}$ computations and memories. Hence, to estimate its asymptotic behavior, we require some different approach.

At first, fix $m$-element variable set $X$ as the set of variables $\{x_0,x_1,\cdots,x_{m-1}\}$, so it will generate a chain structure as the number of variable is changed.
\begin{defs}
Let $\overline{X}=\{x_0,x_1,\cdots\}$ be a countably infinite set of variables, and $\overline{\mathcal{W}}$ be the set of well-formed formulae of $\overline{X}$. For any $\sigma\in S_{\{0,1,2,\dots\}}=:S_\infty$, the set of all permutations of $\{0,1,2\dots\}$ with a finite support, we have a natural action on $\overline{\mathcal{W}}$ defined as
\begin{align*}
\sigma x_i &= x_{\sigma(i)},\\
\sigma \neg\phi &= \neg\sigma\phi,\\
\sigma[\phi\to\psi] &= \sigma\phi\to\sigma\psi.
\end{align*}
A formula $\phi\in\overline{\mathcal{W}}$ is a \textbf{type formula} if for every occurrence of $x_i$, there must exist occurrences of $x_0,\cdots,x_{i-1}$ before it. The \textbf{type of a well-formed formula} $\psi$ is the type formula $\phi$ such that there exists $\sigma\in S_\infty$ satisfying $\psi=\sigma\phi$. It is easy to prove that the type of a well-formed formula exists uniquely.

Lastly, for any formula $\phi\in\overline{\mathcal{W}}$, define an estimator as
\begin{displaymath}
\nu(\phi) := |\overline{X}[\phi]| - \frac{1}{2}\ell(\phi).
\end{displaymath}
\end{defs}

\begin{prop}
For any $\sigma\in S_\infty$, we have $F_{\sigma\phi}=\{\sigma v\mid v\in F_\phi\}$ where $(\sigma v)(x_i)=v(\sigma^{-1} x_i)$. In particular, $\phi$ is a tautology or an antilogy if and only if its type is a tautology or an antilogy, respectively.
\end{prop}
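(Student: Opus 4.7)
The plan is to prove the set equality $F_{\sigma\phi}=\{\sigma T\mid T\in F_\phi\}$ by structural induction on $\phi$, following the three clauses of the recursive definition of well-formed formulae given in Section 2. The engine driving everything is the elementary observation that permuting variable indices also permutes truth assignments compatibly: for any $T$ and any $i$, the variable $x_{\sigma(i)}$ lies in $\sigma T$ if and only if $x_i\in T$, so the valuation satisfies $v_{\sigma T}(\sigma\phi)=v_T(\phi)$. I would either establish this equation first by induction and then deduce the falsity-set statement as an immediate corollary, or do the induction directly on $F$; both paths are essentially the same.

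For the base case $\phi=x_i$, one has $\sigma\phi=x_{\sigma(i)}$, and $T\in F_{x_i}\iff x_i\notin T\iff x_{\sigma(i)}\notin\sigma T\iff\sigma T\in F_{\sigma\phi}$, which gives the two inclusions (the backward one using that $\sigma$ acts bijectively on $\mathcal{P}(\overline{X})$, so every element of $F_{\sigma\phi}$ is of the form $\sigma T$ for a unique $T$). For the negation step, write $\sigma(\neg\psi)=\neg\sigma\psi$ and use $F_{\neg\psi}=F_\psi^{c}$ together with the inductive hypothesis; since $T\mapsto\sigma T$ is a bijection on $\mathcal{P}(\overline{X})$, it takes complements to complements, giving $F_{\sigma(\neg\psi)}=F_{\sigma\psi}^{c}=\{\sigma T\mid T\in F_\psi^{c}\}$. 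For the implication step, write $\sigma[\psi_1\to\psi_2]=[\sigma\psi_1\to\sigma\psi_2]$ and use $F_{\psi_1\to\psi_2}=F_{\psi_2}\setminus F_{\psi_1}$; again bijectivity of $T\mapsto\sigma T$ ensures it commutes with set difference, closing the induction.

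For the \textbf{in particular} clause, let $\psi$ be the type of $\phi$, so $\phi=\sigma\psi$ for some $\sigma\in S_\infty$. By the just-proved identity, $F_\phi=\{\sigma T\mid T\in F_\psi\}$, and the map $T\mapsto\sigma T$ is a bijection $\mathcal{P}(\overline{X})\to\mathcal{P}(\overline{X})$ that restricts to a bijection between the relevant variable subsets (since $\|\phi\|=\|\psi\|$ and $\sigma$ sends the variable set of $\psi$ to the variable set of $\phi$). Hence $F_\phi=\emptyset\iff F_\psi=\emptyset$, giving the tautology statement, and $F_\phi$ equals the full power set of $\phi$'s variables if and only if $F_\psi$ equals the full power set of $\psi$'s variables, giving the antilogy statement.

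The only real subtlety, and what I expect to be the main place where one has to be careful rather than mechanical, is the antilogy half of the final sentence: one must check that the bijection $T\mapsto\sigma T$ matches the two ``full'' power sets correctly, i.e., that the variables of $\sigma\psi$ are exactly $\sigma$ applied to the variables of $\psi$. This follows from the inductive definition of the action (an immediate side-induction), but it is the one point where the finite-support assumption on $\sigma$ and the equality $\|\sigma\phi\|=\|\phi\|$ need to be invoked, rather than simply copied from the inductive hypothesis.
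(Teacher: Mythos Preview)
Your proof is correct; the structural induction along the recursive definition of well-formed formulae is exactly the natural way to verify $F_{\sigma\phi}=\{\sigma T\mid T\in F_\phi\}$, and the paper itself offers no proof at all---it simply asserts the proposition as following ``from the definition of the action.'' Your write-up is thus filling in the details the paper omits, and the approach is the expected one.

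One small remark: your handling of the antilogy half of the ``in particular'' clause is more delicate than it needs to be. If one reads $F_\phi$ as a subset of $\mathcal{P}(\overline{X})$ (which is the natural reading once $\overline{X}$ is introduced), then ``antilogy'' means $F_\phi=\mathcal{P}(\overline{X})$, and since $T\mapsto\sigma T$ is a bijection of $\mathcal{P}(\overline{X})$ onto itself, the equivalence $F_\phi=\mathcal{P}(\overline{X})\iff F_\psi=\mathcal{P}(\overline{X})$ is immediate---exactly parallel to the tautology case, with no need to match variable sets or invoke $\|\sigma\phi\|=\|\phi\|$. Your version, phrased in terms of the power set of the variables actually occurring in $\phi$, is also fine (and is equivalent, since $v_T(\phi)$ depends only on $T$ restricted to those variables), but it introduces an extra side-verification that is avoidable.
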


\begin{prop}\label{typeValue}
For any type formula $\phi$ and $m\geq |\overline{X}[\phi]|$, we have
\begin{displaymath}
\left.\left(\sum_{\psi\textrm{:$\phi$-type}, \overline{X}[\psi]\subseteq\{x_0,\cdots,x_{m-1}\}}z^{\ell(\psi)}\right)\right|_{z=\frac{1}{2\sqrt{m}+1}}=\frac{m^{\underline{|\overline{X}[\phi]|}}}{(2\sqrt{m}+1)^{\ell(\phi)}}=\Theta(m^{\nu(\phi)}).
\end{displaymath}
where $m^{\underline{k}}$ is the falling factorial $m(m-1)\cdots(m-k+1)$.
\end{prop}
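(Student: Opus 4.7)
The plan is to exploit the group-theoretic structure of $[\phi]_m$. Since $[\phi]_m$ is the $S_m$-orbit of $\phi$ and the $S_\infty$-action on well-formed formulae preserves length (by a trivial induction on formula structure: permuting variable indices does not touch negations, implications, or parentheses), every $\psi \in [\phi]_m$ satisfies $\ell(\psi) = \ell(\phi)$. Thus the sum collapses to $|[\phi]_m|\cdot z^{\ell(\phi)}$, and the problem reduces to computing the orbit size.

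For the orbit size, I would apply the orbit-stabilizer theorem. A permutation $\sigma \in S_m$ satisfies $\sigma\phi = \phi$ precisely when, at each position of $\phi$ carrying the variable $x_i$, one has $x_{\sigma(i)} = x_i$, that is $\sigma(i) = i$. Because $\phi$ is a type formula, the indices of variables occurring in $\phi$ are exactly $\{0,1,\ldots,\|\phi\|-1\}$ (the type-formula requirement that an occurrence of $x_i$ be preceded by occurrences of $x_0,\ldots,x_{i-1}$ forbids gaps in this index set). Consequently, the stabilizer of $\phi$ in $S_m$ is the subgroup fixing $\{0,\ldots,\|\phi\|-1\}$ pointwise, which has order $(m-\|\phi\|)!$, giving
\begin{displaymath}
|[\phi]_m| \;=\; \frac{m!}{(m-\|\phi\|)!} \;=\; m^{\underline{\|\phi\|}}.
\end{displaymath}

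Substituting $z = 1/(2\sqrt{m}+1)$ then yields the claimed equality $m^{\underline{\|\phi\|}}/(2\sqrt{m}+1)^{\ell(\phi)}$. For the asymptotic, since $\|\phi\|$ and $\ell(\phi)$ are constants depending only on $\phi$, the falling factorial satisfies $m^{\underline{\|\phi\|}} = m^{\|\phi\|}(1+O(1/m)) = \Theta(m^{\|\phi\|})$ while $(2\sqrt{m}+1)^{\ell(\phi)} = \Theta(m^{\ell(\phi)/2})$, so the ratio is $\Theta(m^{\|\phi\| - \ell(\phi)/2}) = \Theta(m^{|\phi|})$ by the definition $|\phi| = \|\phi\| - \tfrac{1}{2}\ell(\phi)$. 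No step is technically heavy; the only point that deserves care is the stabilizer computation, where one must verify that no nontrivial permutation of $\{0,\ldots,\|\phi\|-1\}$ can fix $\phi$. This is where the type-formula hypothesis is used (in guaranteeing that every such index is actually realized by an occurrence in $\phi$), and once that is observed the equation $\sigma\phi = \phi$ forces $\sigma$ to fix each such index individually.
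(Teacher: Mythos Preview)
Your proof is correct. The paper actually states this proposition without proof, treating it as a routine observation; your argument supplies exactly the expected details (length-preservation of the action, orbit--stabilizer to count $|[\phi]_m|=m^{\underline{\|\phi\|}}$, and the elementary asymptotics), so there is nothing to compare against and nothing to add.
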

From \thmr{linearEquation}, we have a relation between generating function values at the singularity point and limit ratios of coefficients, so it can be expected that tautologies with large $\nu$ values dominate the density of tautologies. Since $W(\frac{1}{2\sqrt{m}+1})=\sqrt{m}$, the density is expected as $m^{\nu(\phi)-\frac{1}{2}}$.

Now, we will prove basic properties of $\nu$. We begin with the following lemma.
\begin{lem}\label{lemTaut}
\begin{enumerate}[label=(\alph*)]
\item If $\phi$ has no $\neg$'s, then $\phi$ is true when the rightmost variable is true.
\item If $\phi$ has no repeated variables, then there is a truth assignment that makes $\phi$ true and a truth assignment that makes $\phi$ false.
\item Suppose that $\phi$ has no repeated variables, no $\neg$'s and that $p$ is a variable in $\phi$ but $\phi\not=p$. Then, there is a truth assignment that $p$ is true and a truth assignment that $p$ is false, where both of them make $\phi$ true.
\item Suppose that $\phi$ has no repeated variables, no $\neg$'s and that $p$ is not the rightmost variable of $\phi$. Then, there is a truth assignment that makes $\phi$ false and $p$ is true.
\end{enumerate}
\end{lem}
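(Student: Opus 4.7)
The plan is to prove the four parts in order by structural induction on the length $\ell(\phi)$, with each part available for use in the later ones. For (a), the base case of a single variable is immediate since $\phi$ is its own rightmost variable. For the inductive step, the absence of $\neg$ forces $\phi=\psi\to\chi$, and the rightmost variable of $\phi$ coincides with that of $\chi$; by the inductive hypothesis on $\chi$, this variable being true makes $\chi$ true, hence $\phi$ true. For (b), the variable base is trivial; for $\phi=\neg\psi$ I would negate the two assignments given by induction; for $\phi=\psi\to\chi$, the no-repeats hypothesis makes the variable sets of $\psi$ and $\chi$ disjoint, so assignments on the two subformulas can be freely combined, giving $\phi$ true by making $\psi$ false and $\phi$ false by making $\psi$ true and $\chi$ false via the inductive hypotheses.

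For (c), since $\phi\neq p$ and $\phi$ has no $\neg$, write $\phi=\psi\to\chi$; the no-repeats hypothesis places $p$ in exactly one of $\psi,\chi$. If $p\in\psi$, I would apply (a) to force $\chi$ true through its rightmost variable, which is not $p$; this makes $\phi$ true under any valuation of $p$. If $p\in\chi$ and $\chi\neq p$, invoking the inductive hypothesis (c) on $\chi$ yields both $p$-values making $\chi$, and hence $\phi$, true. If $p\in\chi$ and $\chi=p$, the $p$-true case renders $\phi=\psi\to p$ true immediately, while for the $p$-false case I would apply (b) to $\psi$ to falsify it, making $\phi=F\to F=T$.

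For (d), we seek an assignment with $p$ true and $\phi$ false. The variable base $\phi=q$ has rightmost variable $q$, so the hypothesis forces $q\neq p$; set $q$ false and $p$ true. For $\phi=\psi\to\chi$, the rightmost variable of $\phi$ lies in $\chi$, and we need $\psi$ true, $\chi$ false, $p$ true. If $p\in\chi$, then $p$ is not the rightmost variable of $\chi$, so induction on $\chi$ gives an assignment falsifying $\chi$ with $p$ true, which combines with (a) on $\psi$ to make $\psi$ true. If $p\in\psi$, either $\psi=p$ or (c) gives $\psi$ true with $p$ true, and then (b) falsifies $\chi$. If $p\notin\phi$, (a) and (b) deliver $\psi$ true and $\chi$ false, and $p$ can be set true freely.

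The main obstacle is the careful case analysis in (c) and (d), particularly ensuring the hypothesis on $p$ transfers correctly to the subformulas: for (d), that $p$ not being the rightmost variable of $\phi$ passes to $\chi$ whenever $p\in\chi$, and in the borderline case $\chi=p$ of (c) that one pivots to (b) on $\psi$ rather than recursing on $\chi$. The disjointness of variable sets across the two sides of $\to$ (from the no-repeats assumption) will be invoked repeatedly to glue independent subformula assignments without conflict, and I expect this bookkeeping, rather than any deep idea, to carry the entire proof.
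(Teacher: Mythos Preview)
Your proposal is correct and follows essentially the same structural-induction strategy as the paper, with the same case splits in (d) according to whether $p$ lies in the left or right subformula. The only notable deviation is in (c): the paper avoids recursing on (c) altogether by simply invoking (b) on the subformula not containing $p$ (make $\chi$ true if $p\in\psi$, make $\psi$ false if $p\in\chi$) and then toggling $p$ freely, whereas you invoke (a) and then recurse on (c) into $\chi$ with a separate base case $\chi=p$. Both arguments are valid; the paper's version is marginally slicker since it needs no inductive hypothesis for (c), but your version works just as well.
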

\begin{proof}
(a) We will induct on the length of $\phi$. If $\phi$ is a variable, then done. Otherwise, since $\phi=\psi\to\eta$ and $\eta$ is true by the induction hypothesis, so is $\phi$.

(b) By induction, if $\phi$ is a variable, then done. Otherwise, it is trivial when $\phi=\neg\psi$ for some $\psi$, since $\neg$ reverses trueness and falseness. Now, if $\phi=\psi\to\eta$, then there are an assignment $u$ on $\psi$ that makes $\psi$ true and an assignment $v$ on $\eta$ that makes $\eta$ false, by induction hypothesis. Since $\phi$ has no repeated variables, $X[\psi]\cap X[\eta]=\emptyset$. Thus, we have an assignment $u\oplus v$ such that $u\oplus v=u$ on $X[\psi]$, $u\oplus v=v$ on $X[\eta]$. Precisely, with the convention $\mathcal{VA}=\mathcal{P}(X)$, we may define $u\oplus v=u[\psi]\cup v[\eta]$. Then, $\llbracket\phi;u\oplus v\rrbracket=1-\llbracket\psi;u\rrbracket(1-\llbracket\eta;v\rrbracket)=0$. Hence, $u\oplus v$ is an assignment that makes $\phi$ false. Similarly, there is an assignment that makes $\phi$ true.

(c) Since $\phi$ has no $\neg$'s and $\phi$ is not $p$, $\phi=\psi\to\eta$ for some $\psi,\eta$. If $p\in X[\psi]$, then there exists an assignment $u$ that makes $\eta$ true by (b). Here, $p\not\in X[\eta]$, so we may choose $u^{-}$, $u^+$ such that $u^-=u=u^+$ on $X[\eta]$ and $u^-(p)=0$, $u^+(p)=1$. Similarly, when $p\in X[\eta]$, it can be done by using an assignment that makes $\psi$ false.

(d) Since $\phi$ has no $\neg$'s and $p$ is not the rightmost variable, clearly we have $\phi\not=p$. If $p\not\in X[\phi]$, then the case (b) is applicable. So we only need to consider the case that $\phi=\psi\to\eta$ for some $\psi$ and $\eta$. First, suppose that $p\in X[\psi]$. Then, by (b), there is an assignment $u$ that makes $\eta$ false. Now, if $\psi\not=p$, then by (c), there is an assignment $v$ that makes both $p$ and $\psi$ true. Then, we may construct $u\oplus v$ so $p$ is true and $\phi$ is false, since $\phi$ has no repeated variables. If $\psi=p$, then for any assignment $u$ that makes $\eta$ false, we may construct $u^+$ makes $p$ is true, so $\phi$ is false.

Now, if $p\in X[\eta]$, then by the induction hypothesis, there is an assignment $u$ that $\eta$ is false but $p$ is true. By (b), there is an assignment $v$ that makes $\psi$ true, so there is an assignment $u\oplus v$ that makes $\phi$ false.
\end{proof}

Then, we have the following.
\begin{prop}\label{wffnorm}~
\begin{enumerate}[label=(\alph*)]
\item For any well-formed formula $\phi$, $\nu(\phi)\leq\frac{1}{2}$.
\item For any tautology $\phi$, $\nu(\phi)\leq-\frac{1}{2}$. Moreover, $\nu(\phi)=-\frac{1}{2}$ if and only if $\phi$ does not contain $\neg$ symbol, $\phi$ has unique variable appears twice, and every other variables in $\phi$ appears only once.
\item For any antilogy $\phi$, $\nu(\phi)\leq-1$.
\end{enumerate}
\end{prop}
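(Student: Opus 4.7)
My plan is to reduce everything to a single structural identity. Let $v(\phi)$ denote the number of variable occurrences in $\phi$ and $n(\phi)$ the number of $\neg$ symbols in $\phi$. A one-line induction on the recursive definition of well-formed formulae shows
\[
\ell(\phi) = 2v(\phi) - 1 + n(\phi)
\]
(base case $1=2-1+0$; the $\neg$-step adds $1$ to both sides; the $\to$-step sums the two $v$'s and the two $n$'s and adds $1$). Since $\|\phi\| \leq v(\phi)$ trivially, the defining equation $|\phi|=\|\phi\|-\tfrac{1}{2}\ell(\phi)$ rewrites as the master identity
\[
|\phi| = \bigl(\|\phi\|-v(\phi)\bigr) + \tfrac{1}{2}\bigl(1-n(\phi)\bigr),
\]
in which both parenthesised terms are non-positive.

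Part (a) is then immediate. For part (b), I would invoke \lemr{lemTaut}(b): a formula with no repeated variables admits a falsifying assignment, so any tautology satisfies $v(\phi) - \|\phi\| \geq 1$, and the master identity gives $|\phi| \leq -1 + \tfrac{1}{2} = -\tfrac{1}{2}$. Equality in that chain forces simultaneously $v(\phi)-\|\phi\|=1$ and $n(\phi)=0$; the former means by a multiplicity count that exactly one variable appears twice and every other variable appears exactly once, the latter that no negation occurs, which is precisely the claimed characterisation. The backward implication of the ``if and only if'' is a direct substitution into the master identity and requires no new ideas. For part (c) I would additionally apply \lemr{lemTaut}(a): an antilogy cannot be $\neg$-free, since otherwise assigning the rightmost variable to true would make $\phi$ true. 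Hence $n(\phi)\geq 1$, and combined with $v(\phi)-\|\phi\|\geq 1$ from the same argument as in (b) the master identity yields $|\phi| \leq -1 + 0 = -1$.

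There is no genuine obstacle: \lemr{lemTaut} has done all the semantic work in advance, and the combinatorial content collapses to the length identity $\ell=2v-1+n$. If anything merits care, it is the reading of the ``if and only if'' in part (b): it is a characterisation \emph{within} the class of tautologies, with both directions verified from the master identity itself rather than by exhibiting examples.
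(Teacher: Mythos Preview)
Your argument is correct and essentially identical to the paper's: writing $R=v(\phi)-\|\phi\|$ and $N=n(\phi)$, your master identity is exactly the paper's formula $|\phi|=\tfrac{1}{2}-R-\tfrac{1}{2}N$, and both of you then invoke \lemr{lemTaut}(a),(b) in the same way for parts (b) and (c). One small slip: the sentence ``both parenthesised terms are non-positive'' is false for the second term when $n(\phi)=0$; the correct bound is $\tfrac{1}{2}(1-n(\phi))\leq\tfrac{1}{2}$, which is all you use anyway.
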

\begin{proof}(a)
Induction on the length. At first, $\nu(x_i)=1-\frac{1}{2}=\frac{1}{2}$, and $\nu(\neg\phi)=\nu(\phi)-\frac{1}{2}\leq 0$. Finally, we have
\begin{displaymath}
\nu(\phi\to\psi)\leq |X[\phi]|+|X[\psi]| - \frac{1}{2}(\ell(\phi)+\ell(\psi)+1)=\nu(\phi)+\nu(\psi)-\frac{1}{2}\leq\frac{1}{2}.
\end{displaymath}

(b) For a well-formed formula $\phi$, the number of occurrences of variables is exactly the number of occurrences of $\to$'s plus 1. Let $R$ be the number of variables in $\phi$ that do not appear first time in $\phi$, $Y$ be the number of occurrences of $\to$'s, and $N$ be the number of occurrences of $\neg$'s. Then, we have
\begin{align*}
\nu(\phi) &= |X[\phi]|-\frac{1}{2}\ell(\phi)\\
&=Y + 1 - R-\frac{1}{2}(Y + (Y+1) + N)\\
&=\frac{1}{2}-R-\frac{1}{2}N.
\end{align*}
Hence, $\nu(\phi)\geq 0$ implies $R=0$, so $\phi$ has no repeated variables. Then, by \lemr{lemTaut}(b), $\phi$ is not a tautology. The remaining part follows from the fact that $R\geq 1$ and $\nu(\phi)=-\frac{1}{2}$ imply $N=0$.

(c) By \lemr{lemTaut}(a) and (b), any antilogy $\phi$ needs at least one $\neg$ and repeated variables. Hence, $R\geq 1$ and $N\geq 1$, so $\nu(\phi)\leq -1$.
\end{proof}

\begin{prop} Suppose that $\phi$ is a tautology and $\nu(\phi)=-\frac{1}{2}$. Then, there are well-formed formulae $\psi_1,\cdots,\psi_k,\eta$ without $\neg$'s, pairwise common variables, and repeated variables such that $\phi$ is $\psi_1\to[\psi_2\to[\cdots\to[\psi_k\to [p\to\eta]]\cdots ]]$ where $p$ is the rightmost variable of $\eta$. Here, $k=0$ is possible.
\end{prop}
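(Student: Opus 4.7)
The plan is to leverage \propr{wffnorm}(b), which pins down the structure of such a $\phi$: it contains no $\neg$ and exactly one variable, call it $p$, appears twice while every other variable in $\phi$ appears once. Since $\phi$ is $\neg$-free, a straightforward induction on length writes it canonically as $\phi = \phi_1\to[\phi_2\to[\cdots\to[\phi_n\to q]\cdots]]$, where $q$ is the rightmost variable of $\phi$ and each $\phi_i$ is itself $\neg$-free. The goal then reduces to proving two structural claims: (i) $q=p$, i.e.\ the repeated variable is the rightmost one; and (ii) exactly one $\phi_i$ equals the single variable $p$ (so the two occurrences of $p$ are precisely $\phi_i=p$ and $q=p$). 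Once (i) and (ii) are in hand, set $k=i-1$, $\psi_j=\phi_j$ for $j\le k$, and $\eta=\phi_{i+1}\to[\cdots\to[\phi_n\to q]\cdots]$ (with the convention $\eta=q$ when $i=n$); the required properties---no $\neg$'s, no repeated variables within any piece, pairwise disjoint variable sets across $\psi_1,\ldots,\psi_k,\eta$, and $p$ being the rightmost variable of $\eta$---then follow immediately from the single-repetition structure of $\phi$.

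Both claims will be proved by contrapositive, in each case exhibiting an assignment falsifying $\phi$ and contradicting the tautology hypothesis. For (i), assume $p\neq q$, so both copies of $p$ lie among the $\phi_i$'s. Set $q$ false. For each $\phi_k$ not containing $p$, use \lemr{lemTaut}(b) to pick an assignment making $\phi_k$ true. If both copies of $p$ lie in a single $\phi_i$, invoke \lemr{lemTaut}(a) to make $\phi_i$ true by forcing its rightmost variable to true (this is insensitive to the value of $p$). If the two copies lie in distinct $\phi_i$ and $\phi_j$, fix a single value for $p$: any value works unless $\phi_i$ or $\phi_j$ equals the standalone variable $p$ (which forces $p$ true), and in every subcase \lemr{lemTaut}(c) produces an assignment making $\phi_i$ (resp.\ $\phi_j$) true with that chosen value of $p$. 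Because $p$ is the only variable that can appear in two different $\phi_k$'s and we have chosen $p$ consistently, these partial assignments glue into a global one that falsifies $\phi$, a contradiction.

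For (ii), now with $p=q$, suppose the second occurrence of $p$ lies in some $\phi_i$ with $\phi_i\neq p$. Set $q=p$ false (consistent, since they denote the same variable). For $\phi_j$ with $j\neq i$, which contains no $p$ and no repeated variables, use \lemr{lemTaut}(b). For $\phi_i$ itself---which has no $\neg$'s, no repeated variables, contains $p$, and is not equal to $p$---apply \lemr{lemTaut}(c) to make it true with $p$ set to false. These partial assignments are again compatible since the only shared variable between any two of the $\phi_k$'s or $q$ is $p$, and $p$ is pinned. The combined assignment falsifies $\phi$, a contradiction; hence the second occurrence of $p$ is the standalone $\phi_i=p$, giving the claimed decomposition.

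The main obstacle is the case analysis in claim (i), and in particular the bookkeeping needed to verify that the partial assignments on the various $\phi_k$'s always combine consistently. The single-repetition condition extracted from \propr{wffnorm}(b) is precisely what forces all the relevant variable sets to be disjoint, so once this is noted, parts (a)--(c) of \lemr{lemTaut} together carry out all the truth-value manipulations with no further analytic input.
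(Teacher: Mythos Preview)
Your proof is correct but proceeds differently from the paper's. The paper argues by induction on the length of $\phi$: writing $\phi=\psi\to\eta$, it splits on whether $\psi$ and $\eta$ share a variable. If not, it shows $\eta$ is again a tautology with $|\eta|=-\tfrac12$ and recurses; if so, the shared variable must be $p$, and it uses \lemr{lemTaut}(c) to force $\psi=p$ and then \lemr{lemTaut}(d) to force $p$ to be the rightmost variable of $\eta$. You instead fully expand $\phi$ as $\phi_1,\ldots,\phi_n\mapsto q$ up front and argue directly that $q=p$ and that some $\phi_i$ is the bare variable $p$, using only parts (a)--(c) of \lemr{lemTaut}. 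Your route avoids part (d) of the lemma entirely, at the cost of a somewhat heavier case analysis in claim (i); the paper's inductive framing keeps the casework lighter but needs the extra lemma. One small remark: in the subcase of claim (i) where both copies of $p$ lie in a single $\phi_i$, your parenthetical ``insensitive to the value of $p$'' is slightly off when the rightmost variable of $\phi_i$ happens to be $p$ itself, but this does not affect the argument, since in that subcase $p$ occurs nowhere outside $\phi_i$ and you are free to set it however \lemr{lemTaut}(a) dictates.
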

\begin{proof}
First, $\phi$ has no $\neg$'s and has the unique repeated variable $p$ which appears twice, by above proposition. Hence, $\phi=\psi\to\eta$ for some $\psi,\eta$.

Suppose $\psi$ and $\eta$ have no common variables. Then, by \lemr{lemTaut}(a), there is an assignment $u$ that makes $\psi$ true. Hence, if there is an assignment $v$ that makes $\eta$ false, we have $u\oplus v$ that makes $\phi=\psi\to\eta$ false. Thus, there is no assignment that makes $\eta$ false, so $\eta$ is again a tautology. This implies that $p\in X[\eta]$, since every tautology has at least one repeated variable. Hence, $\eta$ is again a tautology with $|\eta|=-\frac{1}{2}$. Then, by induction on length, $\eta$ is $\psi_2\to[\cdots\to[\psi_k\to [p\to\eta']]\cdots ]$ and so, $\phi$ is $\psi_1\to[\psi_2\to[\cdots\to[\psi_k\to [p\to\eta']]\cdots ]]$. Thus, done.

Now, assume that $\psi$ and $\eta$ have a common variable. Then, from the uniqueness of the repeated variable of $\phi$, it must be $p$. If $\psi\not=p$, then by \lemr{lemTaut}(b), there is an assignment $u$ that makes $\eta$ false. If $u(p)=1$, then by \lemr{lemTaut}(c), there is an assignment $v$ on $\psi$ that makes both $p$ and $\psi$ true. Also, if $u(p)=0$, then we have an assignment $v$ on $\psi$ that makes $\psi$ true and $p$ false. In any case, $u\oplus v$ makes $\phi=\psi\to\eta$ false, which is a contradiction. So $\psi=p$.

Then, we have $\phi=p\to\eta$. If $p$ is not the rightmost variable of $\eta$, then by \lemr{lemTaut}(d), there is an assignment $u$ on $\eta$ that makes $\eta$ false and $u(p)=1$. Hence, $u$ makes $\phi$ false, which is a contradiction. Thus, $p$ is the rightmost variable of $\eta$.
\end{proof}

From these results, we may guess that the density of tautologies is of $\frac{1}{m}$ order: since the maximum $\nu(\cdot)$ of well-formed formulae is $\frac{1}{2}$ and the maximum $\nu(\cdot)$ of tautologies is $-\frac{1}{2}$, we may expect $\frac{m^{-\frac{1}{2}}}{m^{\frac{1}{2}}}=\frac{1}{m}$ order. Similarly, for antilogies, we may expect $\frac{1}{m\sqrt{m}}$ order.

Also, this result is very similar to the definition of simple tautologies defined in \cite{Fo} and \cite{Za2}, which give that in the logic system with $\to$ and negative literals the density of tautologies is asymptotically same as the density of simple tautologies, i.e., $\frac{7}{8m}+O(\frac{1}{m^2})$. In \cite{Fo}, a simple tautology is defiend as a tautology of the form
\begin{displaymath}
\phi_1\to[\phi_2\to[\cdots\to[\phi_n\to p]\cdots ]],
\end{displaymath}
which can be simplified with the canonical form of an expression, also defined in \cite{Fo}, as 
\begin{displaymath}
\phi_1,\cdots,\phi_n\mapsto p
\end{displaymath}
where each $\phi_i$ is a well-formed formula and $p$ is a variable, with condition $\phi_i=p$ for some $i$; or for some distinct pair $i$ and $j$, $\phi_i$ is a variable and $\phi_j=\bar{\phi_i}$. Here, $\bar{x}$ means negative literal of $x$.

The former is called a simple tautology of the first kind, and the latter is called a simple tautology of the second kind. But there are some differences between our case and the given cases. Firstly, for the case of implication with negative literals, there is no antilogy. Secondly, we have to negate, rather than using negative literals, which increases the length of the formula. It introduces the factor $\sqrt{m}$ in asymptotic ratio, which changes the order. Hence, we will define simple tautologies for our propositional logic system as follows.

\begin{defs} In the following, $k\geq 1$.
\begin{enumerate}[label=(\alph*)]
\item A well-formed formula $\phi$ is a \textbf{simple tautology}, if there exist well-formed formulae $\psi_1,\cdots,\psi_k$ and a variable $p$ such that $\phi$ is
\begin{displaymath}
\psi_1,\cdots,\psi_k\mapsto p
\end{displaymath}
with $\psi_i=p$ for some $i$. Let $\mathcal{S}_1$ be the set of simple tautologies.

\item A well-formed formula $\psi_1,\cdots,\psi_k\mapsto p$ is a \textbf{strict simple tautology}, if $\psi_1=p$ and $\psi_2,\cdots,\psi_k\not=p$. Let $\mathcal{S}_c$ be the set of strict simple tautologies.
\end{enumerate}
\end{defs}

Now, we will try to analyze the asymptotic behavior for the density of tautologies. Actually, it is easy to prove that $\Omega(\frac{1}{m})$. For any tautology $\psi$, $\neg\neg\psi$ is a tautology and for any well-formed formula $\phi$, $p\to[\phi\to p]$ is a tautology for any variable $p$. Hence, we have
\begin{displaymath}
[z^n]W_\emptyset(z) \geq [z^{n-2}]W_\emptyset(z) + m\cdot([z^{n-4}]W(z)).
\end{displaymath}
Then, we have
\begin{displaymath}
[z^n]W(z)\simeq\sqrt{\frac{2m+\sqrt{m}}{4\pi n^3}}(2\sqrt{m}+1)^n
\end{displaymath}
from the expression
\begin{align*}
W(z)&=\frac{1-z-\sqrt{(1-(2\sqrt{m}+1)z)(1+(2\sqrt{m}-1)z)}}{2z}\\
&=\sqrt{m}-\sqrt{2m+\sqrt{m}}\sqrt{1-(2\sqrt{m}+1)z} + O(1-(2\sqrt{m}+1)z).
\end{align*}
Hence, $\lim_{n\to\infty}\frac{[z^{n-2}]W(z)}{[z^n]W(z)}=\frac{1}{(2\sqrt{m}+1)^2}$ and $\lim_{n\to\infty}\frac{[z^{n-4}]W(z)}{[z^n]W(z)}=\frac{1}{(2\sqrt{m}+1)^4}$. Thus,
\begin{displaymath}
\lim_{n\to\infty}\frac{[z^n]W_\emptyset(z)}{[z^n]W(z)}\geq\left(\frac{1}{(2\sqrt{m}+1)^2}\lim_{n\to\infty}\frac{[z^{n-2}]W_\emptyset(z)}{[z^{n-2}]W(z)}\right)+\frac{m}{(2\sqrt{m}+1)^4}
\end{displaymath}
and so,
\begin{displaymath}
\lim_{n\to\infty}\frac{[z^n]W_\emptyset(z)}{[z^n]W(z)}\geq\frac{\sqrt{m}}{4(\sqrt{m}+1)(2\sqrt{m}+1)^2},
\end{displaymath}
where we have $\frac{\sqrt{m}}{4(\sqrt{m}+1)(2\sqrt{m}+1)^2}=\Theta(\frac{1}{m})$. Thus, the density of tautologies is $\Omega(\frac{1}{m})$.

For antilogies, we can get
$\Omega(\frac{1}{m\sqrt{m}})$ from
\begin{displaymath}
[z^n]W_{\mathcal{VA}}(z)\geq [z^{n-1}]W_\emptyset(z),
\end{displaymath}
since every $\neg\phi$ is an antilogy for any tautology $\phi$.

Also, simple tautologies give better lower bound as follows.
\begin{prop}\label{S1gen}~
\begin{enumerate}[label=(\alph*)]
\item The generating function $S_1$ of $\mathcal{S}_1$ is
\begin{displaymath}
\frac{mz^3}{(1+z^2-zW(z))(1-zW(z))}
\end{displaymath}
and
\begin{displaymath}
\lim_{n\to\infty}\frac{[z^n]S_1(z)}{[z^n]W(z)}=\frac{m(4m+6\sqrt{m}+3)}{(\sqrt{m}+1)^2(2m+3\sqrt{m}+2)^2}=\frac{1}{m}-\frac{7}{2m\sqrt{m}} + \frac{7}{m^2} + O(\frac{1}{m^2\sqrt{m}}).
\end{displaymath}
\item The generating function $S_c$ of $\mathcal{S}_c$ is
\begin{displaymath}
\frac{mz^3}{1+z^2-zW(z)}
\end{displaymath}
and
\begin{displaymath}
\lim_{n\to\infty}\frac{[z^n]S_c(z)}{[z^n]W(z)}=\frac{m}{(2m+3\sqrt{m}+2)^2}=\frac{1}{4m}-\frac{3}{4m\sqrt{m}} + \frac{19}{16m^2} + O(\frac{1}{m^2\sqrt{m}}).
\end{displaymath}
\end{enumerate}
\end{prop}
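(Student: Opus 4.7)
The plan is to derive both generating functions by a direct length-tracking decomposition, then read off the limit ratios via Szeg\H{o}'s lemma using the Puiseux expansion of $W$ already recorded earlier in the excerpt, and finally expand in $t=1/\sqrt{m}$.

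For (b), a strict simple tautology of the first kind has the form $p\to[\psi_2\to[\cdots\to[\psi_k\to p]\cdots]]$ with $k\geq 1$ and $\psi_j\neq p$ for $j\geq 2$. For a fixed variable $p$, the generating function of $\psi\neq p$ is $W(z)-z$, and the outer recursive form $\phi\to\xi$ contributes one factor of $z$ per implication together with a final $z$ for the terminal $p$. Summing the resulting geometric series in $k$ gives $z^3/(1-z(W-z))=z^3/(1+z^2-zW)$, and multiplying by $m$ for the choice of $p$ produces the stated closed form. For (a), I would decompose by the leftmost index $i_0$ with $\psi_{i_0}=p$, so $\psi_1,\dots,\psi_{i_0-1}\neq p$ and $\psi_{i_0+1},\dots,\psi_k$ are arbitrary, giving
\[
S_1(z)=m\sum_{k\geq 1}\sum_{i_0=1}^k z^{k+2}(W-z)^{i_0-1}W^{k-i_0}.
\]
The inner sum telescopes via $\sum_{j=0}^n a^jb^{n-j}=(b^{n+1}-a^{n+1})/(b-a)$ with $b-a=W-(W-z)=z$, leaving two simple geometric series in $k$. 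Combining and simplifying reduces to the identity $W(1+z^2-zW)-(W-z)(1-zW)=z$, which then gives the claimed $mz^3/[(1-zW)(1+z^2-zW)]$.

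For the limit ratios, substitute the Puiseux expansion $W(z)=\sqrt{m}-\sqrt{2m+\sqrt{m}}\sqrt{1-z/s_0}+O(1-z/s_0)$ at $s_0=1/(2\sqrt{m}+1)$ into the closed forms, compute $1-zW=A+s_0\sqrt{2m+\sqrt{m}}\,u+O(u^2)$ and $1+z^2-zW=B+s_0\sqrt{2m+\sqrt{m}}\,u+O(u^2)$ with $u=\sqrt{1-z/s_0}$, $A=(\sqrt{m}+1)/(2\sqrt{m}+1)$, $B=(2m+3\sqrt{m}+2)/(2\sqrt{m}+1)^2$, and read off the coefficient of $u$ in each quotient. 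By Szeg\H{o}'s lemma, the sought limit ratio for $S_1$ is $m s_0^4(A+B)/(A^2B^2)$ and for $S_c$ is $ms_0^4/B^2$; using $(\sqrt{m}+1)(2\sqrt{m}+1)=2m+3\sqrt{m}+1$ one gets $A+B=(4m+6\sqrt{m}+3)/(2\sqrt{m}+1)^2$, and the two denominators collapse to the forms in the statement.

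The asymptotic expansions are then a routine Taylor expansion at $t=1/\sqrt{m}=0$. For $S_c$ one writes $m/(2m+3\sqrt{m}+2)^2=t^2/(2+3t+2t^2)^2$ and expands $(2+3t+2t^2)^{-2}=\tfrac14(1-3t+\tfrac{19}{4}t^2+O(t^3))$. For $S_1$ one additionally multiplies by $(4+6t+3t^2)(1+t)^{-2}$ and collects terms through order $t^4$. The only step that is not mechanical is the algebraic simplification $W(1+z^2-zW)-(W-z)(1-zW)=z$ used to combine the two geometric series into the product form, and the main source of trouble is bookkeeping of the expansion orders through the product $(1+t)^{-2}(2+3t+2t^2)^{-2}$; both are straightforward but error-prone, and will be the bulk of the calculation.
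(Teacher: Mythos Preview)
Your proposal is correct and close to the paper's argument, with two small mechanical differences worth noting. For the generating function of $S_1$, the paper uses complementary counting---the series for all formulae $\psi_1,\dots,\psi_k\mapsto p$ is $\frac{mz^2W}{1-zW}$, and subtracting those with no $\psi_i=p$ gives $\frac{mz^2(W-z)}{1+z^2-zW}$---whereas you partition by the leftmost index with $\psi_{i_0}=p$; your telescoping sum produces exactly the same two geometric series, so the identity $W(1+z^2-zW)-(W-z)(1-zW)=z$ you single out is precisely the numerator simplification the paper performs after the subtraction. For the limit ratio, the paper differentiates $S_1$ explicitly and isolates the $W'$ factor before multiplying by $\sqrt{1-z/s_0}$ (and also checks the answer via \thmr{linearEquation}), while you substitute the Puiseux expansion of $W$ directly into the closed form and read off the $u$-coefficient; both are the same application of Szeg\H{o}'s lemma, and your route is arguably cleaner since it avoids the quotient-rule bookkeeping.
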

\begin{proof}
(a) The generating function of well-formed formulae of the form $\psi_1,\cdots,\psi_k\mapsto p$ is
\begin{displaymath}
mz(zW(z)) + mz(zW(z))^2 + mz(zW(z))^3 + \cdots = \frac{mz^2W(z)}{1-zW(z)}.
\end{displaymath}
Here, $mz$ term is for the variable $p$, and $zW(z)$ term is for the $\psi_i$ with $\to$ symbol. Now we select simple tautologies by using the fact that
a given well-formed formula is not a simple tautology if and only if every $\psi_i$ is not $p$. We induce that the generating function of such well-formed formulae is
\begin{displaymath}
mz(z(W(z)-z)) + mz(z(W(z)-z))^2 + mz(z(W(z)-z))^3 + \cdots = \frac{mz^2(W(z)-z)}{1+z^2-zW(z)}.
\end{displaymath}
Hence, we have
\begin{displaymath}
S_1(z)=\frac{mz^2W(z)}{1-zW(z)}-\frac{mz^2(W(z)-z)}{1+z^2-zW(z)}=\frac{mz^3}{(1+z^2-zW(z))(1-zW(z))}.
\end{displaymath}
Then, by Szeg\H{o}'s lemma, when we take $\rho_0=\frac{1}{2\sqrt{m}+1}$, we have
\begin{displaymath}
\lim_{n\to\infty}\frac{[z^n]S_1(z)}{[z^n]W(z)}=\frac{\lim_{z\to \rho_0^{-}}S_1'(z)\sqrt{1-z/\rho_0}}{\lim_{z\to \rho_0^{-}}W'(z)\sqrt{1-z/\rho_0}}.
\end{displaymath}
Now, we have
\begin{align*}
S_1'(z)&=-\frac{mz^3((1+z^2-zW(z))(-zW'(z)) + (1-zW(z))(-zW'(z))}{(1+z^2-zW(z))^2(1-zW(z))^2} + R(z)\\
&=\frac{mz^4(2+z^2-2zW(z))W'(z)}{(1+z^2-zW(z))^2(1-zW(z))^2} + \widetilde{R}(z)
\end{align*}
where $\lim_{z\to \rho_0^{-}}R(z)\sqrt{1-z/\rho_0}=\lim_{z\to \rho_0^{-}}\widetilde{R}(z)\sqrt{1-z/\rho_0}=0$. Thus, we have
\begin{displaymath}
\lim_{n\to\infty}\frac{[z^n]S_1(z)}{[z^n]W(z)}=\frac{m\rho_0^4(2+\rho_0^2-2\rho_0W(\rho_0))}{(1+\rho_0^2-\rho_0W(\rho_0))^2(1-\rho_0W(\rho_0))^2}=\frac{m(4m+6\sqrt{m}+3)}{(\sqrt{m}+1)^2(2m+3\sqrt{m}+2)^2}.
\end{displaymath}
Also, from $S_1(z)(1+z^2-zW(z))(1-zW(z))=mz^3$ and $z(W(z))^2=W(z)-mz-zW(z)$, we have an equation
\begin{displaymath}
S_1(z)=mz^3+(m-1)z^2S_1(z)+z(1+z+z^2)W(z)S_1(z),
\end{displaymath}
and if we use \thmr{linearEquation}, we get
\begin{displaymath}
\lim_{n\to\infty}\frac{[z^n]S_1(z)}{[z^n]W(z)}=\frac{\rho(1+\rho_0+\rho_0^2)S_1(\rho_0)}{1-(m-1)\rho_0^2-\rho_0(1+\rho_0+\rho_0)^2W(\rho_0)}=\frac{m(4m+6\sqrt{m}+3)}{(\sqrt{m}+1)^2(2m+3\sqrt{m}+2)^2}.
\end{displaymath}
which matches to the result from the Szeg\H{o}'s lemma.

(b) This can be done similarly as (a).
\end{proof}

Now, to improve more, we will consider the following.
\begin{defs} Let $\mathcal{B}$ be a set of tautologies and antilogies.
\begin{enumerate}[label=(\alph*)]
\item The strong $\mathcal{B}$-category is a partition of well-formed formulae consisting of strong $\mathcal{B}$-tautologies ($\mathcal{T}_*$), $\mathcal{B}$-antilogies ($\mathcal{A}_*$), and $\mathcal{B}$-unknowns ($\mathcal{U}_*$) determined by $\mathcal{B}$ such that
\begin{itemize}
\item $\phi\in\mathcal{T}_*$ if and only if $\phi\in\mathcal{B}$ and $\phi$ is a tautology; $\phi$ is $\neg\psi$ form where $\psi\in\mathcal{A}_*$; or $\phi$ is $\psi\to\eta$ form where $\eta\in\mathcal{T}_*$.
\item $\phi\in\mathcal{A}_*$ if and only if $\phi\in\mathcal{B}$ and $\phi$ is an antilogy; $\phi$ is $\neg\psi$ form where $\psi\in\mathcal{T}_*$; or $\phi$ is $\psi\to\eta$ form where $\psi\in\mathcal{T}_*$ and $\eta\in\mathcal{A}_*$.
\item $\phi\in\mathcal{U}_*$ if and only if $\phi\not\in\mathcal{T}_*\cup\mathcal{A}_*$.
\end{itemize}
The following table shows this recursive classification.
\FloatBarrier
\begin{table}[h!]
\caption{Strong categories}
\begin{center}
\begin{tabular}{|c|c|c|c|}
 & $\mathcal{T}_*$ & $\mathcal{U}_*$ & $\mathcal{A}_*$\\
 \hline
$\neg$ & $\mathcal{A}_*$ & $\mathcal{U}_*$ & $\mathcal{T}_*$ \\
$\mathcal{T}_*\to$ & $\mathcal{T}_*$ & $\mathcal{U}_*$ & $\mathcal{A}_*$ \\
$\mathcal{U}_*\to$ & $\mathcal{T}_*$ & $\mathcal{U}_*$ & $\mathcal{U}_*$ \\
$\mathcal{A}_*\to$ & $\mathcal{T}_*$ & $\mathcal{U}_*$ & $\mathcal{U}_*$
\end{tabular}
\end{center}
\end{table}
\FloatBarrier

\item  The weak $\mathcal{B}$-category is a partition of well-formed formulae consisting of strong $\mathcal{B}$-tautologies ($\mathcal{T}^*$), $\mathcal{B}$-unknowns ($\mathcal{U}^*$), and $\mathcal{B}$-antilogies ($\mathcal{A}^*$) determined by $\mathcal{B}$ such that
\begin{itemize}
\item $\phi\in\mathcal{T}^*$ if and only if $\phi\in\mathcal{B}$ and $\phi$ is a tautology; $\phi$ is $\neg\psi$ form where $\psi\in\mathcal{A}^*$; or $\phi$ is $\psi\to\eta$ form where $\eta\in\mathcal{T}^*$ or $\psi\in\mathcal{A}^*$.
\item $\phi\in\mathcal{A}^*$ if and only if $\phi\in\mathcal{B}$ and $\phi$ is an antilogy; $\phi$ is $\neg\psi$ form where $\psi\in\mathcal{T}^*$; or $\phi$ is $\psi\to\eta$ form where $\psi\in\mathcal{T}^*$ and $\eta\in\mathcal{A}^*$.
\item $\phi\in\mathcal{U}^*$ if and only if $\phi\not\in\mathcal{T}^*\cup\mathcal{A}^*$.
\end{itemize}
The following table shows this recursive classification.
\FloatBarrier
\begin{table}[h!]
\caption{Weak categories}
\begin{center}
\begin{tabular}{|c|c|c|c|}
 & $\mathcal{T}^*$ & $\mathcal{U}^*$ & $\mathcal{A}^*$\\
 \hline
$\neg$ & $\mathcal{A}^*$ & $\mathcal{U}^*$ & $\mathcal{T}^*$ \\
$\mathcal{T}^*\to$ & $\mathcal{T}^*$ & $\mathcal{U}^*$ & $\mathcal{A}^*$ \\
$\mathcal{U}^*\to$ & $\mathcal{T}^*$ & $\mathcal{U}^*$ & $\mathcal{U}^*$ \\
$\mathcal{A}^*\to$ & $\mathcal{T}^*$ & $\mathcal{T}^*$ & $\mathcal{T}^*$
\end{tabular}
\end{center}
\end{table}
\FloatBarrier

\item A well-formed formula $\phi$ is weak (resp. strong) $\mathcal{B}$-basic if $\phi$ is not a weak (resp. strong) $\mathcal{B}$-unknown and $\phi$ is a weak (resp. strong) $(\mathcal{B}\setminus\{\phi\})$-unknown.

\item The set $\mathcal{B}$ is weak (resp. strong) basic if every $\phi\in\mathcal{B}$ is weak (resp. strong) $\mathcal{B}$-basic.

\item A set of $\mathcal{B}$-knowns $\mathcal{B}'$ is a weak (resp. strong) basis of $\mathcal{B}$ if $\mathcal{B}'$ is weak (resp. strong) basic and every $\phi\in\mathcal{B}$ is a weak (resp. strong) $\mathcal{B}'$-known.
\end{enumerate}
\end{defs}

This proposition is true for both weak and strong category.
\begin{prop} Suppose $\mathcal{B}$ is a set of tautologies and antilogies.
\begin{enumerate}[label=(\alph*)]
\item For a basis $\widetilde{\mathcal{B}}$ of $\mathcal{B}$, $\widetilde{\mathcal{B}}$-category is same as $\mathcal{B}$-category.
\item Every well-formed formula $\phi$ of a basis $\widetilde{\mathcal{B}}$ of $\mathcal{B}$ is $\mathcal{B}$-basic.
\item Every $\mathcal{B}$ has a basis and a well-formed formula $\phi$ is in a basis $\widetilde{\mathcal{B}}$ if and only if $\phi$ is $\mathcal{B}$-basic. In particular, there is a unique basis $\widetilde{\mathcal{B}}$ of $\mathcal{B}$, which is a subset of $\mathcal{B}$.
\end{enumerate}
\end{prop}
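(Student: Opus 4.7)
My plan is to reduce (a), (b), and (c) to two induction-on-length lemmas that are parallel for the weak and strong categories. The first is a monotonicity lemma: whenever $\mathcal{B}_1 \subseteq \mathcal{T}_{\mathcal{B}_2}$, one has $\mathcal{T}_{\mathcal{B}_1} \subseteq \mathcal{T}_{\mathcal{B}_2}$ and $\mathcal{A}_{\mathcal{B}_1} \subseteq \mathcal{A}_{\mathcal{B}_2}$. The proof will be a straightforward simultaneous induction on $\ell(\phi)$: for a variable $x \in \mathcal{T}_{\mathcal{B}_1}$ we have $x \in \mathcal{B}_1 \subseteq \mathcal{T}_{\mathcal{B}_2}$ directly, and each non-variable case just reads the classification tables, which reference only classifications of strict subformulas handled by the induction hypothesis. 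Applying this lemma to the two inclusions $\widetilde{\mathcal{B}} \subseteq \mathcal{T}_\mathcal{B}$ and $\mathcal{B} \subseteq \mathcal{T}_{\widetilde{\mathcal{B}}}$ built into the basis definition immediately yields (a): $\mathcal{T}_\mathcal{B} = \mathcal{T}_{\widetilde{\mathcal{B}}}$, $\mathcal{A}_\mathcal{B} = \mathcal{A}_{\widetilde{\mathcal{B}}}$, and hence $\mathcal{U}_\mathcal{B} = \mathcal{U}_{\widetilde{\mathcal{B}}}$.

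The second lemma is a stability statement: if $\ell(\psi) < \ell(\phi)$ then the $\mathcal{B}$-classification of $\psi$ equals its $(\mathcal{B}\setminus\{\phi\})$-classification. This is another induction on $\ell(\psi)$; the only place the defining set enters the recursion is through "is this formula in the set?", and $\ell(\psi) < \ell(\phi)$ forces $\psi \neq \phi$, so the answer is unaffected by removing $\phi$. The key consequence I extract is that for any non-variable $\phi$,
\[
\phi \in \mathcal{T}_\mathcal{B} \iff \phi \in \mathcal{B} \text{ or } \phi \in \mathcal{T}_{\mathcal{B}\setminus\{\phi\}},
\]
and the analogous statement with $\widetilde{\mathcal{B}}$ in place of $\mathcal{B}$. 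Combined with (a), the "subformula rule" parts of these two equivalences agree, so $\phi \in \mathcal{T}_{\mathcal{B}\setminus\{\phi\}}$ if and only if $\phi \in \mathcal{T}_{\widetilde{\mathcal{B}}\setminus\{\phi\}}$.

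With these tools (b) falls out quickly. For a variable $\phi \in \widetilde{\mathcal{B}}$, the basis condition gives $\phi \in \mathcal{T}_\mathcal{B}$, hence $\phi \in \mathcal{B}$, and $\phi \notin \mathcal{T}_{\mathcal{B}\setminus\{\phi\}}$ is automatic for variables. For a non-variable $\phi \in \widetilde{\mathcal{B}}$, its $\widetilde{\mathcal{B}}$-basicness gives $\phi \notin \mathcal{T}_{\widetilde{\mathcal{B}}\setminus\{\phi\}}$, so the equivalence above yields $\phi \notin \mathcal{T}_{\mathcal{B}\setminus\{\phi\}}$; combined with $\phi \in \mathcal{T}_\mathcal{B}$ and the displayed identity, this forces $\phi \in \mathcal{B}$, and so $\phi$ is $\mathcal{B}$-basic.

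For (c), I will set $\widetilde{\mathcal{B}}^* = \{\phi : \phi \text{ is } \mathcal{B}\text{-basic}\}$, which is automatically a subset of $\mathcal{B}$. Verifying that $\widetilde{\mathcal{B}}^*$ is itself basic is immediate from monotonicity, since $\mathcal{T}_{\widetilde{\mathcal{B}}^* \setminus \{\phi\}} \subseteq \mathcal{T}_{\mathcal{B}\setminus\{\phi\}}$. The substantive step is $\mathcal{B} \subseteq \mathcal{T}_{\widetilde{\mathcal{B}}^*}$, which I will strengthen to $\mathcal{T}_\mathcal{B} \subseteq \mathcal{T}_{\widetilde{\mathcal{B}}^*}$ and $\mathcal{A}_\mathcal{B} \subseteq \mathcal{A}_{\widetilde{\mathcal{B}}^*}$ by induction on $\ell(\phi)$: when the $\mathcal{T}_\mathcal{B}$-membership of $\phi$ comes from a subformula rule the induction hypothesis finishes it, and otherwise $\phi \in \mathcal{B}$ with no subformula rule available, so stability yields $\phi \notin \mathcal{T}_{\mathcal{B}\setminus\{\phi\}}$ and thus $\phi \in \widetilde{\mathcal{B}}^*$. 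The iff characterization and uniqueness then follow: (b) forces any basis $\widetilde{\mathcal{B}}'$ into $\widetilde{\mathcal{B}}^*$, and any $\phi \in \widetilde{\mathcal{B}}^* \setminus \widetilde{\mathcal{B}}'$ would contradict $\mathcal{B}$-basicness via $\phi \in \mathcal{B} \subseteq \mathcal{T}_{\widetilde{\mathcal{B}}'} \subseteq \mathcal{T}_{\mathcal{B}\setminus\{\phi\}}$. The main obstacle is the simultaneous bookkeeping of $\mathcal{T}$ and $\mathcal{A}$ in both tables during the inductions, but the argument is purely syntactic and identical in form for the weak and strong cases.
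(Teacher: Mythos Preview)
Your proposal is correct and follows essentially the same approach as the paper: both arguments hinge on induction on $\ell(\phi)$ and on the observation that removing a formula $\phi$ from the defining set leaves the classification of every strictly shorter formula unchanged. The paper states these facts more tersely inline (e.g., ``for every $\psi$ with $\ell(\psi)<\ell(\phi)$, the four categories agree''), whereas you package the same content into two explicit lemmas (monotonicity and stability); the logical skeleton of (a)--(c) is identical in both versions.
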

\begin{proof}(a) With induction on the length of well-formed formulae, it comes from the recursive structure of categories.

(b) If $\phi$ is not $\mathcal{B}$-basic, then $\phi$ is $(\mathcal{B}\setminus\{\phi\})$-known. Now, for every well-formed formula $\psi$ with $\ell(\psi)<\ell(\phi)$, $\mathcal{B}$-category, $(\mathcal{B}\setminus\{\phi\})$-category, $\widetilde{\mathcal{B}}$-cateogory and $(\widetilde{\mathcal{B}}\setminus\{\phi\})$-category are all same. Hence, $\phi$ is a $(\widetilde{\mathcal{B}}\setminus\{\phi\})$-known, contradicting that $\widetilde{\mathcal{B}}$ is basic.

(c) It is enough to show that every $\mathcal{B}$-basic $\phi$ is in $\widetilde{\mathcal{B}}$ and the set of $\mathcal{B}$-basic well-formed formulae is a basis. If $\phi$ is $\mathcal{B}$-basic, then $\phi$ is not a $(\mathcal{B}\setminus\{\phi\})$-known and so, not a $(\widetilde{\mathcal{B}}\setminus\{\phi\})$-known. Since $\widetilde{\mathcal{B}}$ is a basis, $\phi$ is a $\widetilde{\mathcal{B}}$-known, and so, $\phi\in\widetilde{\mathcal{B}}$.

Let $\widehat{\mathcal{B}}$ be the set of $\mathcal{B}$-basic well-formed formulae. Then, by the definition of basic well-formed formula, $\widehat{\mathcal{B}}\subseteq\mathcal{B}$. Since $\phi\in\widehat{\mathcal{B}}$ is not a $(\mathcal{B}\setminus\{\phi\})$-known, it is not a $(\widehat{\mathcal{B}}\setminus\{\phi\})$-known, and so, $\widehat{\mathcal{B}}$ is basic.
Let $\psi$ be a shortest $\mathcal{B}$-known that is not a $\widehat{\mathcal{B}}$-known. Then, for every shorter well-formed formula $\psi'$ than $\psi$, $(\mathcal{B}\setminus\{\psi\})$-category is same as $\widehat{\mathcal{B}}$-category. Now, $\psi$ is not $\mathcal{B}$-basic, so is a $(\mathcal{B}\setminus\{\psi\})$-known, and hence $\psi$ is a $\widehat{\mathcal{B}}$-known, which is a contradiction. So every $\mathcal{B}$-known is a $\widehat{\mathcal{B}}$-known, and so, $\widehat{\mathcal{B}}$ is a basis of $\mathcal{B}$.
\end{proof}

\begin{prop}
For a set $\mathcal{B}$ of tautologies and antilogies, every strong $\mathcal{B}$-tautology is a weak $\mathcal{B}$-tautology, and every weak $\mathcal{B}$-tautology is a tautology. Hence, every weak $\mathcal{B}$-basic well-formed formula is strong $\mathcal{B}$-basic.
\end{prop}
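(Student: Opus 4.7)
The plan is to prove the three implications in sequence. The first two are straightforward simultaneous inductions on $\ell(\phi)$, while the third relies on a locality observation combined with the first.

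For strong $\Rightarrow$ weak, I would induct on $\ell(\phi)$ simultaneously on the two inclusions $\mathcal{T}_*\subseteq\mathcal{T}^*$ and $\mathcal{A}_*\subseteq\mathcal{A}^*$. The variable case is vacuous since variables lie in neither class in either category. For $\phi=\neg\psi$ the defining rules coincide exactly in the two categories, and the inductive hypothesis on $\psi$ transfers membership. For $\phi=\psi\to\eta$, every clause generating $\mathcal{T}_*$ (namely $\phi\in\mathcal{B}$, or $\eta\in\mathcal{T}_*$) is also a clause generating $\mathcal{T}^*$, and the unique clause generating $\mathcal{A}_*$ (namely $\psi\in\mathcal{T}_*$ and $\eta\in\mathcal{A}_*$) is identical to the corresponding clause generating $\mathcal{A}^*$. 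Applying the inductive hypothesis to the immediate subformulas closes the argument; the extra weak-tautology clause $\psi\in\mathcal{A}^*$ is simply unused for this direction.

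For weak $\Rightarrow$ tautology/antilogy, I would again induct on $\ell(\phi)$ simultaneously on the two statements. Each recursive clause is verified by the semantic rules $v_T(\neg\phi)=1-v_T(\phi)$ and $v_T(\phi\to\psi)=1-v_T(\phi)(1-v_T(\psi))$ of Section~2: a tautological consequent forces the implication tautological, an antilogical antecedent does the same (this is the case distinctive of the weak category), and a tautological antecedent with an antilogical consequent makes the implication antilogical; each case closes by the inductive hypothesis applied to the immediate subformulas. The base clause $\phi\in\mathcal{B}$ is immediate from the hypothesis that $\mathcal{B}$ consists of tautologies.

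For the third claim, the key auxiliary statement is a locality lemma: for any $\psi$ with $\ell(\psi)<\ell(\phi)$, the (weak or strong) category of $\psi$ with parameter $\mathcal{B}$ coincides with its category with parameter $\mathcal{B}\setminus\{\phi\}$. This follows by a straightforward induction on $\ell(\psi)$, since the categorisation rules only inspect $\psi\in\mathcal{B}$ (which is equivalent to $\psi\in\mathcal{B}\setminus\{\phi\}$ because $\ell(\psi)<\ell(\phi)$ forces $\psi\neq\phi$) together with the categories of strictly shorter subformulas. Now suppose $\phi$ is weak $\mathcal{B}$-basic. If $\phi\notin\mathcal{B}$, then the membership of $\phi$ in the weak $\mathcal{B}$-tautologies must arise via the recursive rule from its immediate subformulas, whose categories are invariant under removing $\phi$ by locality; hence $\phi$ would also be a weak $(\mathcal{B}\setminus\{\phi\})$-tautology, contradicting basicness. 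Therefore $\phi\in\mathcal{B}$, which makes $\phi$ a strong $\mathcal{B}$-tautology trivially. Finally, by contraposing the first claim with parameter $\mathcal{B}\setminus\{\phi\}$, the hypothesis that $\phi$ is not a weak $(\mathcal{B}\setminus\{\phi\})$-tautology implies $\phi$ is not a strong $(\mathcal{B}\setminus\{\phi\})$-tautology, so $\phi$ is strong $\mathcal{B}$-basic. I expect the bookkeeping in the locality lemma and in this final parameter swap to be the main source of care, since each category implicitly depends on the parameter set and on the categories of strictly smaller subformulas.
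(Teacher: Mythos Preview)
Your argument is correct. The paper actually states this proposition without proof, so there is nothing to compare against; the author evidently regarded all three implications as routine. Your simultaneous inductions for the first two claims are exactly the expected arguments, and your contrapositive use of the first claim (with parameter $\mathcal{B}\setminus\{\phi\}$) to obtain the ``not a strong $(\mathcal{B}\setminus\{\phi\})$-tautology'' half of strong basicness is clean.

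One minor simplification: for the step where you show $\phi\in\mathcal{B}$, the locality lemma is more than you need. If $\phi\notin\mathcal{B}$ then $\mathcal{B}\setminus\{\phi\}=\mathcal{B}$, so the weak $\mathcal{B}$-category and the weak $(\mathcal{B}\setminus\{\phi\})$-category are literally the same partition; hence $\phi$ being a weak $\mathcal{B}$-tautology but not a weak $(\mathcal{B}\setminus\{\phi\})$-tautology is an immediate contradiction. Your locality lemma is correct (and indeed the paper uses the same observation in the proof of the preceding proposition about bases), but here it is not required.
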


The following system of equations naturally follows from the structure of $\mathcal{B}$-categories.

\begin{prop} Let $\mathcal{B}$ be a set of tautologies and antilogies.
\begin{enumerate}[label=(\alph*)]
\item Let $BT_*, BA_*, T_*, U_*, A_*$ be the generating functions of the strong basic tautologies of $\mathcal{B}$, strong basic antilogies of $\mathcal{B}$, strong $\mathcal{B}$-tautologies, strong $\mathcal{B}$-unknowns, and strong $\mathcal{B}$-antilogies, respectively. Then the following system of equations is satisfied.
\begin{align*}
T_*(z) &= BT_*(z) + zA_*(z) + zT_*(z)W(z),\\
U_*(z) &= mz - BT_*(z) - BA_*(z) + zU_*(z) + z[U_*(z)W(z)+A_*(z)W(z)-A_*(z)T_*(z)],\\
A_*(z) &= BA_*(z) + zT_*(z) + zA_*(z)T_*(z).
\end{align*}
\item Let $BT^*, BA^*, T^*, U^*, A^*$ be the generating functions of the weak basic tautologies of $\mathcal{B}$, weak basic antilogies of $\mathcal{B}$, weak $\mathcal{B}$-tautologies, weak $\mathcal{B}$-unknowns, and weak $\mathcal{B}$-antilogies, respectively. Then the following system of equations is satisfied.
\begin{align*}
T^*(z) &= BT^*(z) + zA^*(z) + z[T^*(z)W(z)+A^*(z)W(z)-A^*(z)T^*(z)],\\
U^*(z) &= mz - BT^*(z) - BA^*(z) + zU^*(z) + zU^*(z)W(z),\\
A^*(z) &= BA^*(z) + zT^*(z) + zA^*(z)T^*(z).
\end{align*}
\end{enumerate}
\end{prop}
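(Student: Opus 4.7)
The strategy is a length-stratified case analysis on the three ways a well-formed formula is built (a variable, $\neg\psi$, or $\phi_1\to\phi_2$), summing contributions to each category from the corresponding classification table. The key auxiliary observation, which I would establish first, is that \textbf{every element $\phi$ of the strong basis $\widetilde{\mathcal{B}}$ lands in $\mathcal{U}_*$ under the table alone}, i.e., without invoking the clause ``$\phi\in\mathcal{B}\Rightarrow\phi\in\mathcal{T}_*$''. This follows from basic-ness together with disjointness of the partition: $\phi$ cannot match any $\mathcal{T}_*$-output cell of the table (else $\phi$ would remain a tautology under $\widetilde{\mathcal{B}}\setminus\{\phi\}$, since sub-formulae of $\phi$ have the same category under both bases, contradicting basic-ness); and $\phi$ cannot match any $\mathcal{A}_*$-output cell (else $\phi\in\mathcal{T}_*\cap\mathcal{A}_*$, violating the partition). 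Reading the table, the only remaining possibility is $\mathcal{U}_*$. The weak analogue follows from the same two inputs.

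For part (a), $T_*$ splits disjointly as $\widetilde{\mathcal{B}}$, plus $\{\neg\psi:\psi\in\mathcal{A}_*\}$, plus $\{\phi_1\to\phi_2:\phi_2\in\mathcal{T}_*\}$, giving $T_* = B_* + zA_* + zT_*W$ (the factor $W$ absorbs the three $\to\mathcal{T}_*$-cells of the table), with disjointness provided by the lemma. The equation $A_* = zT_* + zT_*A_*$ is read directly from the $\mathcal{A}_*$-output entries of the $\neg$ row and the single cell $(\mathcal{T}_*,\mathcal{A}_*)$. For $U_*$ I would first compute the ``raw'' $\mathcal{U}_*$-count that ignores the basis-forcing: variables contribute $mz$, $\neg\mathcal{U}_*$ contributes $zU_*$, and the five $\to\mathcal{U}_*$-cells sum to $z(U_*W + A_*(U_*+A_*)) = z(U_*W + A_*W - A_*T_*)$. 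By the lemma, every basis element appears in this raw count but is actually forced into $\mathcal{T}_*$, so subtracting $B_*$ produces the claimed formula.

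Part (b) is mechanically identical with the weak table in place of the strong one. The only substantive change is that the $\mathcal{A}^*$-row of $\to$ now lands entirely in $\mathcal{T}^*$; consequently $\phi_1\to\phi_2\in\mathcal{T}^*$ iff $\phi_2\in\mathcal{T}^*$ or $\phi_1\in\mathcal{A}^*$, contributing by inclusion-exclusion $z(T^*W + A^*W - A^*T^*)$ to $T^*$; the $\to\mathcal{U}^*$-cells shrink to $(\mathcal{T}^*,\mathcal{U}^*),(\mathcal{U}^*,\mathcal{U}^*),(\mathcal{U}^*,\mathcal{A}^*)$, summing to $zU^*W$; and the $\to\mathcal{A}^*$-cell is unchanged. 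Adding the $\neg$ and variable contributions and correcting by $-B^*$ gives the three stated equations.

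The main obstacle is the overlap bookkeeping: the three summands of $T_*$ must be pairwise disjoint (handled by the basic-ness lemma), the two overlapping $T^*$ summands require inclusion-exclusion (hence the $-A^*T^*$ term), and the $-B_*$ (resp. $-B^*$) subtraction corrects for basis elements that the table alone would place in $\mathcal{U}_*$ (resp. $\mathcal{U}^*$). A reliable consistency check is to sum the three equations within each part and verify $T_* + U_* + A_* = W = mz + zW + zW^2$, confirming that the three categories genuinely partition $\mathcal{W}$.
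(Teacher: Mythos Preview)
Your proof is correct and is precisely the natural argument the paper alludes to when it states that ``the following system of equations naturally follows from the structure of $\mathcal{B}$-categories'' without giving further details. Your explicit lemma---that every basis element falls in the $\mathcal{U}$-column under the recursive table alone---is the key bookkeeping point needed to justify the $-B_*$ (resp.\ $-B^*$) correction and the disjointness of the $T_*$ summands, and your derivation of each equation from the table entries (with inclusion--exclusion in the weak case) matches the intended reading exactly.
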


Note that these systems of equations have fixed number of equations whenever $m$, the number of variables of the propositional logic system, changes, so they make easy to analyze an asymptotic lower bound as $m\to\infty$. Here, for fixed $\mathcal{B}$, we have
\begin{displaymath}
\lim_{n\to\infty}\frac{[z^n]W_\emptyset(z)}{[z^n]W(z)}\geq \lim_{n\to\infty}\frac{[z^n]T^*(z)}{[z^n]W(z)}\geq \lim_{n\to\infty}\frac{[z^n]T_*(z)}{[z^n]W(z)}
\end{displaymath}

\begin{prop}
\begin{enumerate}[label=(\alph*)]
\item $\mathcal{S}_c$ is the strong basis of $\mathcal{S}_1$.
\item The weak basis of $\mathcal{S}_1$ is the set of well-formed formulae of the form $\psi_1\to[\cdots\to[\psi_k\to p]\cdots ]$ where $\psi_1=p$, and $\psi_2,\cdots,\psi_k$ are not $p$ nor $\mathcal{S}_1$-antilogy. Its generating function satisfies
\begin{displaymath}
B^*(z) = \frac{mz^3}{1+z^2-zW(z)+zA^*(z)}.
\end{displaymath}
which naturally satisfies
\begin{displaymath}
B^*(z) = mz^3-z^2B^*(z)+z[B^*(z)W(z)-B^*(z)A^*(z)].
\end{displaymath}
\end{enumerate}
\end{prop}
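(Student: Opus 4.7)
The plan is to verify, for each of the two parts, the three defining properties of a basis of $\mathcal{S}_1$: the candidate set is a subset of $\mathcal{S}_1$, every element of $\mathcal{S}_1$ is a (strong or weak) tautology generated by it, and the set is itself basic. Uniqueness, established in the previous proposition, then completes the identification.

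For (a), $\mathcal{S}_c\subseteq\mathcal{S}_1$ is immediate. Given $\phi=\psi_1\to[\psi_2\to[\cdots\to[\psi_k\to p]\cdots]]\in\mathcal{S}_1$, I induct on $\ell(\phi)$: if $\psi_1=p$ and none of $\psi_2,\dots,\psi_k$ equals $p$, then $\phi\in\mathcal{S}_c\subseteq\mathcal{T}_*$; otherwise the tail starting at the leftmost position whose entry equals $p$ is a strictly shorter simple tautology of the first kind, hence in $\mathcal{T}_*$ by induction, and iterated application of the strong rule that $\psi\to\eta\in\mathcal{T}_*$ whenever $\eta\in\mathcal{T}_*$ pulls $\phi$ into $\mathcal{T}_*$. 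For basicness, suppose $\phi=p\to\psi_2\to\cdots\to\psi_k\to p\in\mathcal{S}_c$ were a strong $(\mathcal{S}_c\setminus\{\phi\})$-tautology. Since $\phi$ is neither a negation nor in $\mathcal{S}_c\setminus\{\phi\}$, the only applicable rule forces the tail $\psi_2\to\cdots\to\psi_k\to p$ into $\mathcal{T}_*$; but $\psi_2\neq p$, so this tail is not in $\mathcal{S}_c$, and the same rule iterates. After $k-1$ peelings we are reduced to requiring the variable $p$ itself to lie in $\mathcal{T}_*$, which is impossible because $p$ is neither a negation, an implication, nor a member of $\mathcal{S}_c$.

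Part (b) follows the same template, with one significant new feature: the weak category has the extra rule placing $\chi\to\eta\in\mathcal{T}^*$ whenever $\chi\in\mathcal{A}^*$. The key observation is that if $\phi=p\to\psi_2\to\cdots\to\psi_k\to p$ and some intermediate $\psi_j$ (with $j\geq 2$) lies in $\mathcal{A}^*$, then the subformula $\psi_j\to\cdots\to\psi_k\to p$ is already in $\mathcal{T}^*$ via that rule, and propagating through the outer $\to$'s places $\phi$ in $\mathcal{T}^*$ without needing $\phi$ as a basis element; this is exactly what the additional exclusion in the candidate weak basis rules out. With the exclusion in place, both the inclusion $\mathcal{S}_1\subseteq\mathcal{T}^*$ and basicness follow by the same peeling induction as in (a). Finally, the generating function comes from counting candidates by choice of variable $p$ ($m$ ways, length $1$), the rightmost $p$ (length $1$), $k\geq 1$ implication symbols (total length $k$), and $k-1$ intermediate slots each a well-formed formula that is neither the fixed $p$ nor in $\mathcal{A}^*$ (each contributing $W(z)-z-A^*(z)$). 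Summing the geometric series,
\[
B^*(z)=\sum_{k=1}^{\infty}mz^{k+2}(W(z)-z-A^*(z))^{k-1}=\frac{mz^3}{1+z^2-zW(z)+zA^*(z)},
\]
and clearing denominators produces the equivalent recursive form stated.

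The one delicate point is the apparent circularity in the definition of the proposed weak basis, which refers to weak $\mathcal{S}_1$-antilogies while $\mathcal{A}^*$ depends on the basis. This is harmless: by the earlier proposition the $\mathcal{B}$-category depends only on the set of tautologies generated by $\mathcal{B}$, so $\mathcal{A}^*$ is intrinsic to $\mathcal{S}_1$ and the candidate set is unambiguously defined.
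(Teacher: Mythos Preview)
The paper states this proposition without proof, so your argument fills a genuine gap. Your overall strategy---verify that the candidate set is contained in $\mathcal{S}_1$, that it generates all of $\mathcal{S}_1$ in the appropriate category, and that it is basic---is exactly right, and the generating function computation is correct.

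One small imprecision in part (a): in the ``otherwise'' branch you write ``the tail starting at the leftmost position whose entry equals $p$ is a strictly shorter simple tautology of the first kind.'' If $\psi_1=p$ and also some $\psi_j=p$ with $j\geq 2$, the leftmost such position is $1$ and the tail is $\phi$ itself, not strictly shorter. The fix is immediate: in the ``otherwise'' case there is always some $j\geq 2$ with $\psi_j=p$, and the tail from that position (or simply the tail $\psi_2\to\cdots\to\psi_k\to p$) works. Your handling of the apparent circularity in (b) is correct: the weak $\mathcal{S}_1$-category is defined directly from $\mathcal{S}_1$ by the recursive rules, so $\mathcal{A}^*$ is unambiguous before any basis is identified; and since $\mathcal{A}^*(\mathcal{B}')\subseteq\mathcal{A}^*(\mathcal{S}_1)$ by monotonicity, the exclusion ``$\psi_j\notin\mathcal{A}^*(\mathcal{S}_1)$'' is strong enough for the peeling argument in the basicness check.
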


Now, we may solve the equation for $\mathcal{S}_1$-strong case algebraically. Since $BA^*(z)=0$, with the identity $A_*(z)=\frac{zT_*(z)}{1-zT_*(z)}$, we obtain
\begin{align*}
T_*(z) &= \frac{1-z^2+zS_c(z)-zW(z)-\sqrt{(1-z^2+zS_c(z)-zW(z))^2-4zS_c(z)(1-zW(z))}}{2z(1-zW(z))}\\
A_*(z) &= \frac{zT_*(z)}{1-zT_*(z)},\\
U_*(z) &= \frac{mz-S_c(z)+zA_*(z)^2}{1-z-z(W(z)+A_*(z))}=\frac{mz-S_c(z)+zA_*(z)(W(z)-T_*(z))}{1-z-zW(z)}.
\end{align*}

For $\rho_0=\frac{1}{2\sqrt{m}+1}$, we have
\begin{align*}
T_*&(\rho_0)=\frac{\sqrt{m}(2m+4\sqrt{m}+3)}{2m+3\sqrt{m}+2}\\
&-\frac{(2\sqrt{m}+1)^2}{\sqrt{m}+1}\sqrt{\frac{m(4m^3+24m^2\sqrt{m}+60m^2+84m\sqrt{m}+70m+33\sqrt{m}+7)}{(2\sqrt{m}+1)^4(2m+3\sqrt{m}+2)^2}}.
\end{align*}
and it is also possible to compute $A_*(\rho_0)$ and $U_*(\rho_0)$. Note that if we substitute $1/y$ for $\sqrt{m}$, then $yT_*(\rho_0)$, $yU_*(\rho_0)$ and $yA_*(\rho_0)$ are analytic about $y$ near 0. So we have series expansions
\begin{align*}
T_*(\rho_0) &= \frac{1}{2\sqrt{m}}-\frac{5}{4m}+\frac{17}{8m\sqrt{m}} + O(\frac{1}{m^2}),\\
A_*(\rho_0) &= \frac{1}{4m}-\frac{3}{4m\sqrt{m}} + O(\frac{1}{m^2}),\\
U_*(\rho_0) &= \sqrt{m}-\frac{1}{2\sqrt{m}}+\frac{1}{m}-\frac{11}{8m\sqrt{m}} + O(\frac{1}{m^2}).
\end{align*}
Then, by \thmr{linearEquation}, if we let $\gamma=\lim_{n\to\infty}\frac{[z^n]S_c(z)}{[z^n]W(z)}$, we have
\begin{align*}
\lim_{n\to\infty}\frac{[z^n]T_*(z)}{[z^n]W(z)}&=\frac{(T_*(\rho_0)-1/\rho_0)(T_*(\rho_0)+\gamma/\rho_0)}{T_*(\rho_0)(1/\rho_0-\sqrt{m}) + A_*(\rho_0) + \sqrt{m}/\rho_0 - 1/\rho_0^2+1}\\
&=\frac{(T_*(\rho_0)-1/\rho_0)(T_*(\rho_0)+\gamma/\rho_0)}{T_*(\rho_0)(\sqrt{m}+1) + A_*(\rho_0) -\sqrt{m}(2\sqrt{m}+3)}\\
&=\frac{1}{m}-\frac{7}{2m\sqrt{m}}+\frac{31}{4m^2}+O(\frac{1}{m^2\sqrt{m}}),
\end{align*}
which gives a slight improvement from $\lim_{n\to\infty}\frac{[z^n]S_1(z)}{[z^n]W(z)}$.

To use this method of undetermined coefficients of power series for weak categories, we need to prove that $yT^*(\rho_0)$, $yU^*(\rho_0)$, $yA^*(\rho_0)$, $yBT^*(\rho_0)$ and $yBA^*(\rho_0)$ are also analytic about $y=\frac{1}{\sqrt{m}}$ near 0. We will prove that our equations have analytic solutions near $y=0$, and there are unique solutions for $BT,BA,T,U$ in a bounded region for fixed small $y$, so our analytic solutions match with real solutions that we want.

First, we will consider the case with arbitrary $BT^*(z)=B^*(z)$ and $BA^*(z)=0$. The equation $U^*(z) = mz - B^*(z) + zU^*(z) + zU^*(z)W(z)$ is actually equivalent to
\begin{displaymath}
mz-B^*(z)=U^*(z)(1-z-zW(z))=\frac{mzU^*(z)}{W(z)}=mz\left(1 - \frac{T^*(z)+A^*(z)}{W(z)}\right).
\end{displaymath}
Moreover, it is easy to check that a system of equations
\begin{align*}
T^*(z) &= B^*(z) + zA^*(z) + z[T^*(z)W(z)+A^*(z)W(z)-A^*(z)T^*(z)],\\
A^*(z) &= zT^*(z) + zA^*(z)T^*(z),
\end{align*}
is actually equivalent to 
\begin{align*}
W(z)B^*(z) &=mz(T^*(z)+A^*(z)), \\
A^*(z) &= zT^*(z) + zA^*(z)T^*(z).
\end{align*}
Then, with
\begin{align*}
\rho_0 &= \frac{1}{2\sqrt{m}+1}=\frac{y}{2+y}= \frac{y}{2}-\frac{y^2}{4}+\frac{y^3}{8}-\frac{y^4}{16}+\cdots,\\
m &= \frac{1}{y^2},\\
W(\rho_0) &= \sqrt{m}=\frac{1}{y},
\end{align*}
we have the system of equations
\begin{align*}
T^*(\rho_0) &= B^*(\rho_0) + \frac{y}{y+2}A^*(\rho_0) + \frac{y}{y+2}\left[\frac{T^*(\rho_0)}{y}+\frac{A^*(\rho_0)}{y}-A^*(\rho_0)T^*(\rho_0)\right],\\
A^*(\rho_0) &= \frac{y}{y+2}T^*(\rho_0) + \frac{y}{y+2}A^*(\rho_0)T^*(\rho_0),
\end{align*}
which is equivalent to 
\begin{align*}
(y+2)B^*(\rho_0) &= T^*(\rho_0)+A^*(\rho_0),\\
A^*(\rho_0) &= \frac{y}{y+2}T^*(\rho_0) + \frac{y}{y+2}A^*(\rho_0)T^*(\rho_0).
\end{align*}
Note that since $B^*,T^*,A^*$ are generating functions, which are bounded by $W$, the values of $T^*(\rho_0)$, $B^*(\rho_0)$, $A^*(\rho_0)$ satisfy $yB^*(\rho_0)$, $yT^*(\rho_0)$, $yA^*(\rho_0)\leq 1$ for each $y=m^{-1/2}$ where $m$ is a positive integer.  Then, we need to solve
\begin{equation}
\begin{split}\label{eqntosolve}
(y+2)[yB^*(\rho_0)] &= [yT^*(\rho_0)]+[yA^*(\rho_0)],\\
[yA^*(\rho_0)] &= \frac{y+[yA^*(\rho_0)]}{y+2}[yT^*(\rho_0)].
\end{split}
\end{equation}
in $[0,1]^3$. Now, assume that we have an equation $B^*(z)=\Theta(B^*(z), T^*(z), A^*(z);m,z,W(z))$, and define $\theta(b,t,a;w)=w\Theta(b/w, t/w, a/w;\frac{1}{w^2},\frac{w}{w+2},\frac{1}{w})$. Then, we define 
\begin{align*}
\lambda(b,t,a;w)&=\left(\theta(b,t,a;w), (w+2)b-a, \frac{w+a}{w+2}t\right),\\
\tilde{\lambda}(b,t,a;w)&=\left(\theta(b,t,a;w), \frac{b}{2}+\frac{(w+1)a+(w+3)t-at}{2(w+2)}, \frac{w+a}{w+2}t\right).
\end{align*}
As we said, the set of fixed points of $\lambda$ and $\tilde{\lambda}$ are same. Now, solving our original system of equations \eqref{eqntosolve} for $yB^*(\rho_0)$, $yT^*(\rho_0)$, $yA^*(\rho_0)$ is equivalent to finding a fixed point of $\lambda$ when $w$ is fixed as $y$. Assume that we have a unique solution $b_0,t_0,a_0$ in $\{(b,t,a)\in\mathbb{C}^3\mid |b|,|t|,|a|\leq 1\}$ satisfying $(b_0,t_0,a_0)=\lambda(b_0,t_0,a_0;0)$, in other words, a fixed point at $w=0$. Since we have $a_0=\frac{a_0t_0}{2}$ and $t_0=2b_0-a_0$, this gives $t_0=2b_0$ and $a_0=0$. Then, for $\epsilon>0$, we say $D\subseteq\mathbb{C}^3$ is a \textbf{proper $\epsilon$-region} if it satisfies following:
\begin{itemize}
\item $D$ is closed and bounded, i.e., compact.
\item $D$ contains an open neighborhood of $(b_0,t_0,a_0)$,
\item $\theta$ is analytic about $w,b,t,a$ when $|w|<\epsilon$ and $(b,t,a)\in D$,
\item $\tilde{\lambda}(D;w)\subseteq D$ when $|w|<\epsilon$,
\item if $y<\epsilon$, then every solution $(yB^*(\rho_0),yT^*(\rho_0), yA^*(\rho_0))$ in $[0,1]^3$ of \eqref{eqntosolve} is in $D$.
\end{itemize}
For the last condition, it is sufficient to show that if $(b,t,a)=\lambda(b,t,a;y)$ and $|b|,|t|,|a|\leq 1$, then $(b,t,a)\in\ D$. Hence, by the analytic implicit function theorem, we will get the existence of analytic solution when the determinant of the Jacobian
\begin{displaymath}
\det J_1=\det\frac{\partial (\text{id} - \lambda)}{\partial(b,t,a)}
\end{displaymath}
is nonzero at $(b_0,t_0,a_0)$ where $w=0$, and by the Banach contraction principle, we will get the uniqueness of the solution for fixed $w=y=m^{-1/2}$ when the Jacobian
\begin{displaymath}
J_2=\frac{\partial\tilde{\lambda}}{\partial(b,t,a)}
\end{displaymath}
has norm value less than 1 whenever $|w|<\epsilon$ and $(b,t,a)\in D$ for some fixed norm. Here, we are using $\tilde{\lambda}$ since the Jacobian of $\lambda$ contains $w+2$ entry, which makes hard to get small norm. By simple computation, we have
\begin{displaymath}
\det J_1(b,t,a;w)=\frac{2+2w+a-t}{2+w}-\frac{2+2w+a-t}{2+w}\frac{\partial\theta}{\partial b}-(2+w-t)\frac{\partial\theta}{\partial t}-(a+w)\frac{\partial\theta}{\partial a}.
\end{displaymath}
and
\begin{displaymath}
J_2(b,t,a;w)=\begin{bmatrix}
\frac{\partial\theta}{\partial b} & \frac{\partial\theta}{\partial t} & \frac{\partial\theta}{\partial a}\\[1.3ex]
\frac{1}{2} & \frac{w+3-a}{2(w+2)} & \frac{w+1-t}{2(w+2)} \\[1.3ex]
0 & \frac{w+a}{w+2} & \frac{t}{w+2}
\end{bmatrix}.
\end{displaymath}
Moreover, if $\Theta$ is a function of $A^*$ only, then we may reduce the number of variables by considering
\begin{displaymath}
\widehat{\lambda}(a;w)=\frac{w+a}{w+2}((w+2)\theta(a;w)-a)=(w+a)\theta(a;w)-\frac{a(w+a)}{w+2},
\end{displaymath}
which gives
\begin{align*}
\widehat{J}_1(a;w) &= \frac{2w+2a+2}{w+2} - a \theta'(a;y) - \theta(a;y) = 1 - \widehat{J}_2(a;w),\\
\widehat{J}_2(a;w) &= a\theta'(a;w) + \theta(a;w) - \frac{w+2a}{w+2}.
\end{align*}
We have free to choose $J_1$ or $\widehat{J}_1$ to check the existence of analytic solution, and $J_2$ or $\widehat{J}_2$ to check the uniqueness of solution. Of course, we need to make a variation for the definition of proper region and choose properly to use $\widehat{J}_2$. Lastly, for the proper $\epsilon$-region with $\epsilon<1$, suppose $(b,t,a)$ is a solution of $(b,t,a)=\lambda(b,t,a;w)$ satisfying $|b|,|t|,|a|\leq 1$ where $|w|<\epsilon$. A proper $\epsilon$-region must contain every such $(b,t,a)$, and we want to find $\epsilon$-region as narrow as possible to get uniqueness easily. Note that we have $a=\frac{w+a}{w+2}t$, which gives
\begin{displaymath}
|a| =\left|\frac{wt}{2+w-t}\right|\leq\frac{|w||t|}{|2+w|-|t|}\leq\frac{\epsilon|t|}{2-\epsilon-|t|}\leq\frac{\epsilon|t|}{1-\epsilon},
\end{displaymath}
so it is reasonable to try to take proper $\epsilon$-region as a subset of $\{(b,t,a)\mid |b|,|t|,|a|\leq 1, |a|\leq\frac{\epsilon}{1-\epsilon}|t|\}$.

Now, consider $\mathcal{S}_1$-weak case. We have two choices of $\Theta(B^*(z),T^*(z),A^*(z);z,m,W(z))$. One is
\begin{displaymath}
\Theta(B^*(z),T^*(z),A^*(z);z,m,W(z)) = mz^3-z^2B^*(z)+z[B^*(z)W(z)-B^*(z)A^*(z)],
\end{displaymath}
and the other is
\begin{displaymath}
\Theta(B^*(z),T^*(z),A^*(z);z,m,W(z))=\frac{mz^3}{1+z^2-zW(z)+zA^*(z)}.
\end{displaymath}
Note that the latter is a function of $A^*$ only. If we take the latter as our $\Theta$, then we have
\begin{displaymath}
\theta(b,t,a;w)=\frac{w^2}{w+2}\cdot\frac{1}{2w^2+3w+2+(w+2)a}.
\end{displaymath}
Since $\theta(b,t,0;0)=0$ always, so $b_0=t_0=a_0=0$ is a unique solution. Now, if $\epsilon\leq\frac{1}{8}$, $|w|<\epsilon$ and $|a|\leq\frac{1}{7}$, then we have
\begin{align*}
|\theta(b,t,a;w)|&\leq\frac{\epsilon^2}{2-|w|}\cdot\frac{1}{2-3|w|-2|w|^2-|2+w||a|}\\
&\leq\frac{\frac{1}{64}}{2-\frac{1}{8}}\cdot\frac{1}{2-\frac{3}{8}-\frac{2}{64}-(2+\frac{1}{8})\frac{1}{7}}=\frac{28}{4335}.
\end{align*}
Hence, if we define
\begin{displaymath}
D=\{(b,t,a)\mid |b|\leq\frac{28}{4335}, |t|\leq \frac{33}{35}, |a|\leq\frac{1}{7}\},
\end{displaymath}
then $D$ is closed, bounded region containing an open neighborhood of $(b_0,t_0,a_0)=(0,0,0)$. Moreover, if $(b,t,a)\in D$, then
\begin{align*}
|\theta(b,t,a;w)|&\leq\frac{28}{4335},\\
\left|\frac{b}{2}+\frac{(w+1)a+(w+3)t-at}{2(w+2)}\right|&\leq\frac{14}{4335}+\frac{(1+\frac{1}{8})\frac{1}{7}+(3+\frac{1}{8})+\frac{1}{7}}{2(2-\frac{1}{8})}=\frac{14}{4335}+\frac{32}{35}\leq\frac{33}{35}<1\\
\left|\frac{w+a}{w+2}t\right|&\leq\frac{\frac{1}{8}+\frac{1}{7}}{2-\frac{1}{8}}=\frac{1}{7},
\end{align*}
and so, $\tilde{\lambda}(D;w)\subseteq D$. Now, if $(b,t,a)=\lambda(b,t,a;w)$ and $|b|,|t|,|a|\leq 1$, then we have
\begin{align*}
|a|&\leq\frac{\epsilon}{1-\epsilon}|t|\leq\frac{1}{7},\\
|b|&=|\theta(b,t,a;w)|\leq\frac{28}{4335},\\
|t|&=\left|\frac{b}{2}+\frac{(w+1)a+(w+3)t-at}{2(w+2)}\right|\leq\frac{33}{35},
\end{align*}
and so, $(b,t,a)\in D$. Thus, $D$ is a proper $\epsilon$-region. Note that we may choose smaller $D$. For example, from $|t|\leq\frac{33}{35}$, we may get $|a|\leq\frac{33}{7\cdot 35}$ and from this bound of $a$, we can get smaller bounds for $t$ and $\theta(b,t,a;w)$. Hence, we may repeat this bootstrap process to make $D$ smaller and smaller. Lastly, we will consider Jacobians. We will choose $J_1,J_2$ rather than $\widehat{J}_1$ and $\widehat{J}_2$. By direct computation, we have
\begin{align*}
\det J_1(b,t,a;w)&=\frac{2+2w+a-t}{2+w}+(w+a)\frac{w^2}{w+2}\cdot\frac{w+2}{(2w^2+3w+2+(w+2)a)^2},\\
J_2(b,t,a;w)&=\begin{bmatrix}
0 & 0 & -\frac{w^2}{(2w^2+3w+2+(w+2)a)^2}\\[1.3ex]
\frac{1}{2} & \frac{w+3-a}{2(w+2)} & \frac{w+1-t}{2(w+2)} \\[1.3ex]
0 & \frac{w+a}{w+2} & \frac{t}{w+2}
\end{bmatrix}.
\end{align*}
First, $\det J_1(0,0,0;0)=1\not=0$, so we have local analytic solution about $w$ from the analytic implicit function theorem. Then, for the (1,3)-entry of $J_2$, we have
\begin{displaymath}
|(J_2)_{13}|\leq\frac{\epsilon^2}{(2-3|w|-2|w|^2-|2+\epsilon||a|)^2}\leq\frac{\frac{1}{8^2}}{\left(2-\frac{3}{8}-\frac{2}{64}-\left(2+\frac{1}{8}\right)\frac{1}{7}\right)^2}=\left(\frac{28}{289}\right)^2,
\end{displaymath}
when $|w|<\epsilon\leq\frac{1}{8}$ and $(b,t,a)\in D$. Now, the sum of the absolute values of the second column is bounded by
\begin{displaymath}
\frac{\epsilon+3+|a|}{2(2-\epsilon)}+\frac{\epsilon+|a|}{2-\epsilon}=\frac{3(\epsilon+|a|+1)}{2(2-\epsilon)}
\end{displaymath}
and of the third column is bounded by
\begin{displaymath}
\left(\frac{28}{289}\right)^2+\frac{\epsilon+1+|t|}{2(2-\epsilon)}+\frac{|t|}{2-\epsilon}=\left(\frac{28}{289}\right)^2+\frac{\epsilon+3|t|+1}{2(2-\epsilon)}.
\end{displaymath}
Here, both of them become less than 1 as $\epsilon\to 0$, so there is $\epsilon_0\leq\frac{1}{8}$ such that $w<\epsilon_0$ implies $\|J_2\|_\infty<1$. Hence, by the Banach contraction principle, we have the uniqueness of the solution for each such $w$, so values of the local analytic solution must match to true values of $yB^*(\rho_0)$, $yT^*(\rho_0)$ and $yA^*(\rho_0)$. Then, $yW(\rho_0)=1$ and $yU^*(\rho_0)=yW(\rho_0)-yT^*(\rho_0)-yA^*(\rho_0)$, so it is also true for $yU^*(\rho_0)$.

From this result, we may assume
\begin{align*}
B^*(\rho_0) &= \frac{b_{-1}}{y} + b_0 + b_1 y + b_2 y^2 + \cdots, \\
T^*(\rho_0) &= \frac{t_{-1}}{y} + t_0 + t_1 y + t_2 y^2 + \cdots, \\
U^*(\rho_0) &= \frac{u_{-1}}{y} + u_0 + u_1 y + u_2 y^2 + \cdots, \\
A^*(\rho_0) &= \frac{a_{-1}}{y} + a_0 + a_1 y + a_2 y^2 + \cdots, 
\end{align*}
where $b_{-1},t_{-1},u_{-1},a_{-1}\geq 0$, since we are considering generating functions. Then, we have a system of quadratic equations
\begin{align*}
B^*(z) &= mz^3-z^2B^*(z)+z[B^*(z)W(z)-B^*(z)A^*(z)],\\
T^*(z) &= B^*(z) + zA^*(z) + z[T^*(z)W(z)+A^*(z)W(z)-A^*(z)T^*(z)],\\
U^*(z) &= mz - B^*(z) + zU^*(z) + zU^*(z)W(z),\\
A^*(z) &= zT^*(z) + zA^*(z)T^*(z),
\end{align*}
and if we write this equation in terms of $y$, we will get
\begin{align*}
B^*(\rho_0) &= \frac{y}{(2+y)^3}-\frac{y^2}{(2+y)^2}B^*(\rho_0)+\frac{1}{2+y}B^*(\rho_0)-\frac{y}{2+y}B^*(\rho_0)A^*(\rho_0),\\
T^*(\rho_0) &= B^*(\rho_0) + \frac{y}{2+y}A^*(x) + \frac{1}{2+y}T^*(\rho_0)+\frac{1}{2+y}A^*(\rho_0)-\frac{y}{2+y}A^*(\rho_0)T^*(\rho_0),\\
U^*(\rho_0) &= \frac{1}{y(2+y)} - B^*(\rho_0) + \frac{y}{2+y}U^*(\rho_0) + \frac{1}{2+y}U^*(\rho_0),\\
A^*(\rho_0) &= \frac{y}{2+y}T^*(\rho_0) + \frac{y}{2+y}A^*(\rho_0)T^*(\rho_0).
\end{align*}
Then, the method of undetermined coefficients gives
\begin{align*}
B^*(\rho_0) &= \frac{1}{4\sqrt{m}}-\frac{1}{2m}+\frac{9}{16m\sqrt{m}}+O(\frac{1}{m^2}), \\
T^*(\rho_0) &= \frac{1}{2\sqrt{m}}-\frac{1}{m}+\frac{5}{4m\sqrt{m}}+O(\frac{1}{m^2}), \\
U^*(\rho_0) &= \sqrt{m}-\frac{1}{2\sqrt{m}}+\frac{3}{4m}-\frac{5}{8m\sqrt{m}}+O(\frac{1}{m^2}), \\
A^*(\rho_0) &= \frac{1}{4m}-\frac{5}{8m\sqrt{m}}+O(\frac{1}{m^2}).
\end{align*}
Now, by \thmr{linearEquation} again, we have
\begin{align*}
\lim_{n\to\infty}\frac{[z^n]B^*(z)}{[z^n]W(z)}&=\frac{1}{4m}-\frac{3}{4m\sqrt{m}}+\frac{9}{8m^2} + O(\frac{1}{m^2\sqrt{m}}),\\
\lim_{n\to\infty}\frac{[z^n]T^*(z)}{[z^n]W(z)}&=\frac{1}{m}-\frac{5}{2m\sqrt{m}}+\frac{29}{8m^2} + O(\frac{1}{m^2\sqrt{m}}).
\end{align*}

This is a the asymptotic density of weak tautologies from simple tautologies, so is a lower bound of the density of tautologies. Also, we may define second kind of simple tautologies as tautologies of $\psi_1,\cdots,\psi_{k+1}\mapsto\psi_{k+2}$ form where
\begin{itemize}
\item $\psi_{k+2}$ is not $\eta_1\to\eta_2$ form,
\item there exists distinct $i,j\leq k+1$ such that $\psi_i=p$ and $\psi_j=\neg p$.
\end{itemize}
Even we introduce this new class of tautologies, from \propr{typeValue} and \propr{wffnorm}, since all simple tautologies of the second kind have $\neg$ symbol in it, we expect that this does not change the $\frac{1}{\sqrt{m}}$ order term of $T^*(\rho_0)$, but it will give an improvement on the $\frac{1}{m}$ order term. Hence, it will not change the $\frac{1}{m}$ order term of ratio, but it will give an improvement on the $\frac{1}{m\sqrt{m}}$ order term of it. Let $\mathcal{S}_2$ be the set of second kind simple tautologies.

We have to start from finding the basis of $\mathcal{S}_1\cup\mathcal{S}_2$. Let us consider weak sense categories, and use simple notations $B,T,U,A$ for generating functions of basis, tautologies, unknowns, and antilogies, respectively. A well-formed formula $\psi_1,\cdots,\psi_{k-1}\mapsto\psi_k$ such that $\psi_k$ is a variable or $\neg\eta$ for a well-formed formula $\eta$ is $(\mathcal{S}_1\cup\mathcal{S}_2)$-basic if and only if one of the following is true.
\begin{itemize}
\item First, $k\geq 2$, and there is a variable $p$ such that $\psi_1,\psi_k$ are $p$, $\psi_2,\cdots,\psi_{k-1}$ are not $p$, and $\psi_2,\cdots,\psi_{k-1}$ are not $(\mathcal{S}_1\cup\mathcal{S}_2)$-antilogies.
\item There is a variable $p$ and $i<k$ such that $\psi_1$ is $p$, $\psi_i$ is $\neg p$, $\psi_k$ is not $p$, $\psi_k$ is not $\neg\eta$ for an $(\mathcal{S}_1\cup\mathcal{S}_2)$-antilogy $\eta$, and for any $1<j<k$, $\psi_j$ is not an $(\mathcal{S}_1\cup\mathcal{S}_2)$-antilogy nor $p$.
\item There is a variable $p$ and $i<k$ such that $\psi_1$ is $\neg p$, $\psi_i$ is $p$, $\psi_k$ is not $p$, $\psi_k$ is not $\neg\eta$ for an $(\mathcal{S}_1\cup\mathcal{S}_2)$-antilogy $\eta$, and for any $1<j<k$, $\psi_j$ is not an $(\mathcal{S}_1\cup\mathcal{S}_2)$-antilogy nor $\neg p$.
\end{itemize}
Also, these three conditions are pairwise disjoint. The generating function for the first case is
\begin{displaymath}
\frac{mz^3}{1-z[W(z)-z-A(z)]},
\end{displaymath}
for the second case is
\begin{displaymath}
mz^2\left(\frac{(m-1)z+z[W(z)-A(z)]}{1-z[W(z)-z-A(z)]} - \frac{(m-1)z+z[W(z)-A(z)]}{1-z[W(z)-z-z^2-A(z)]}\right),
\end{displaymath}
and for the third case is
\begin{displaymath}
mz^3\left(\frac{(m-1)z+z[W(z)-A(z)]}{1-z[W(z)-z^2-A(z)]} - \frac{(m-1)z+z[W(z)-A(z)]}{1-z[W(z)-z-z^2-A(z)]}\right).
\end{displaymath}
Deducing these formulae is similar to the proof of \propr{S1gen}.(a). To apply the method to compute the density of weak tautologies from the $\mathcal{S}_1$ case, we have to consider the existence of proper region $D$. If $\Theta$ is a function of only $A^*(z)$ and $\theta(b,t,0;0)=0$, then to prove the existence of proper region $D$, it is enough to choose $\epsilon>0$ such that there exists $\delta>0$ satisfies
\begin{itemize}
\item if $|w|<\epsilon$ and $|a|\leq\frac{\epsilon}{1-\epsilon}$, then $|\theta(b,t,a;w)|\leq\delta$, and
\item $\frac{\delta}{2} + \frac{3}{2(2-\epsilon)(1-\epsilon)}\leq 1$.
\end{itemize}
If these conditions are satisfied, then $D=\{(b,t,a)\mid |b|\leq\delta, |t|\leq 1,\ |a|\leq\frac{\epsilon}{1-\epsilon}\}$ will be a proper $\epsilon$-region. Then, we may compute Jacobians and check whether $\det J_1(0,0,0;0)$ is nonzero and a norm of $J_2$ is less than 1, where we may reduce $D$ by bootstrap argument and $\epsilon$ freely, if is needed. By direct computation, we can show $\theta(b,t,0;0)=0$ is really true for this case either, and hence, other process to prove analyticity is almost automatic.

After we get the analyticity, we have to consider a system of quadratic equations including the generating function of the basis. We have the following system of equations.
\begin{align*}
B_1(z) =& mz^3 - z^2B_1(z) + z(W(z)B_1(z) - A(z)B_1(z)),\\
B_2(z) =& m(m-1)z^3 + mz^3[W(z)-A(z)] - z^2B_2(z) \\&+ z[W(z)B_2(z) - A(z)B_2(z)],\\
B_3(z) =& m(m-1)z^3 + mz^3[W(z)-A(z)] - (z^2+z^3)B_3(z) \\&+ z[W(z)B_3(z) - A(z)B_3(z)],\\
B_4(z) =& m(m-1)z^4 + mz^4[W(z)-A(z)] - z^3B_4(z) \\&+ z[W(z)B_4(z) - A(z)B_4(z)],\\
B_5(z) =& m(m-1)z^4 + mz^4[W(z)-A(z)] - (z^2+z^3)B_5(z) \\&+ z[W(z)B_5(z) - A(z)B_5(z)],\\
B(z) =& B_1(z) + B_2(z) - B_3(z) + B_4(z) - B_5(z),\\
T(z) =& B(z) + zA(z) + z(T(z)W(z) + A(z)W(z) - A(z)T(z)),\\
U(z) =& mz - B(z) + zU(z) + zU(z)W(z),\\
A(z) =& zT(z) + zA(z)T(z).
\end{align*}
From this system of equations, we have series solutions
\begin{align*}
B_1(\rho_0) =& \frac{1}{4\sqrt{m}}-\frac{1}{2m}+\frac{9}{16m\sqrt{m}}+O(\frac{1}{m^2}),\\
B_2(\rho_0) =& \frac{\sqrt{m}}{4}-\frac{1}{4}-\frac{3}{16\sqrt{m}}+\frac{5}{8m}-\frac{47}{64m\sqrt{m}}+O(\frac{1}{m^2}),\\
B_3(\rho_0) =& \frac{\sqrt{m}}{4}-\frac{1}{4}-\frac{3}{16\sqrt{m}}+\frac{9}{16m}-\frac{35}{64m\sqrt{m}}+O(\frac{1}{m^2}),\\
B_4(\rho_0) =& \frac{1}{8}-\frac{3}{16\sqrt{m}}+\frac{1}{16m}+\frac{3}{32m\sqrt{m}}+O(\frac{1}{m^2}),\\
B_5(\rho_0) =& \frac{1}{8}-\frac{3}{16\sqrt{m}}+\frac{9}{32m\sqrt{m}}+O(\frac{1}{m^2}),\\
B(\rho_0) =& \frac{1}{4\sqrt{m}}-\frac{3}{8m}+\frac{3}{16m\sqrt{m}}+O(\frac{1}{m^2}),\\
T(\rho_0) =& \frac{1}{2\sqrt{m}}-\frac{3}{4m}+\frac{1}{2m\sqrt{m}}+O(\frac{1}{m^2}),\\
U(\rho_0) =& \sqrt{m}-\frac{1}{2\sqrt{m}}+\frac{1}{2m}+O(\frac{1}{m^2}),\\
A(\rho_0) =& \frac{1}{4m}-\frac{1}{2m\sqrt{m}}+O(\frac{1}{m^2}),\\
\end{align*}
and by \thmr{linearEquation}, we get
\begin{align*}
\lim_{n\to\infty}\frac{[z^n]B(z)}{[z^n]W(z)}&=\frac{1}{4m}-\frac{1}{2m\sqrt{m}}+\frac{5}{16m^2} + O(\frac{1}{m^2\sqrt{m}}),\\
\lim_{n\to\infty}\frac{[z^n]T(z)}{[z^n]W(z)}&=\frac{1}{m}-\frac{7}{4m\sqrt{m}}+\frac{5}{4m^2} + O(\frac{1}{m^2\sqrt{m}}),\\
\lim_{n\to\infty}\frac{[z^n]U(z)}{[z^n]W(z)}&=1-\frac{1}{m}+\frac{5}{4m\sqrt{m}}-\frac{1}{8m^2} + O(\frac{1}{m^2\sqrt{m}}),\\
\lim_{n\to\infty}\frac{[z^n]A(z)}{[z^n]W(z)}&=\frac{1}{2m\sqrt{m}}-\frac{9}{8m^2} + O(\frac{1}{m^2\sqrt{m}}).
\end{align*}
This result shows an improvement for the $\frac{1}{m\sqrt{m}}$ order term but not or the $\frac{1}{m}$ order term, as we expected.

For the upper bound of the density, we have a natural upper bound
\begin{displaymath}
1 - \lim_{n\to\infty}\frac{[z^n]A(z)}{[z^n]W(z)}
\end{displaymath}
which gives 
\begin{displaymath}
\lim_{n\to\infty}\frac{[z^n]W_\emptyset(z)}{[z^n]W(z)} \leq 1 - \frac{1}{2m\sqrt{m}}+\frac{9}{8m^2} + O(\frac{1}{m^2\sqrt{m}}).
\end{displaymath}
We may improve the upper bound slightly by dividing the class unknowns into unknowns and not tautologies nor antilogies. In such partitioning, $\mathcal{B}$-tautologies and $\mathcal{B}$-antilogies are not changed, and by same argument, we may compute, with proper analyticity assumption, the density of not tautologies nor antilogies has lower bound
\begin{displaymath}
\frac{1}{4m}-\frac{5}{16m\sqrt{m}}+\frac{5}{32m^2}+O(\frac{1}{m^2\sqrt{m}}),
\end{displaymath}
and this gives an upper bound
\begin{displaymath}
1 -\frac{1}{4m} - \frac{3}{16m\sqrt{m}}+\frac{31}{32m^2} + O(\frac{1}{m^2\sqrt{m}}).
\end{displaymath}
But this upper bound is still too far from the lower bound. So we need some different approach.

Here, since the set of tautologies is $\cap_{v\in\mathcal{VA}}T^v$, the minimum density of $T^v$ among every truth assignment $v$ gives an upper bound. By symmetry, it only depends on the number of variable assigned as true. Suppose $v$ has $m-k$ true variables and $k$ false variables. Then, the generating function $F$ for false well-formed formulae satisfies the following equation.
\begin{displaymath}
F(z) = kz + z(W(z)-F(z))+z(W(z)-F(z))F(z).
\end{displaymath}
Hence, we have
\begin{displaymath}
F(z)=\frac{-1-z+zW(z)+\sqrt{(1+z-zW(z))^2+4z^2(k+W(z))}}{2z}.
\end{displaymath}
By the Szeg\H{o} lemma, we have
\begin{displaymath}
\lim_{n\to\infty}\frac{[z^n]F(z)}{[z^n]W(z)}=\frac{1}{2}-\frac{\sqrt{m}}{2\sqrt{m+6\sqrt{m}+4k+4}}
\end{displaymath}
so it gives
\begin{displaymath}
\lim_{n\to\infty}\frac{[z^n]W_\emptyset(z)}{[z^n]W(z)}\leq\frac{1}{2}+\frac{\sqrt{m}}{2\sqrt{m+6\sqrt{m}+4k+4}}.
\end{displaymath}
Thus, if we set $k=m$, then we get
\begin{displaymath}
\lim_{n\to\infty}\frac{[z^n]W_\emptyset(z)}{[z^n]W(z)}\leq\frac{1}{2}+\frac{\sqrt{m}}{2\sqrt{5m+6\sqrt{m}+4}}=\frac{\sqrt{5}+1}{2\sqrt{5}}-\frac{3}{10\sqrt{5m}}+\frac{7}{100\sqrt{5} m} + O(\frac{1}{m\sqrt{m}}).
\end{displaymath}
This upper bound is still not satisfactory, but it successfully decreases the highest term. As a result, the density of tautology has
\begin{displaymath}
\frac{1}{m} + O(\frac{1}{m\sqrt{m}}) \leq \lim_{n\to\infty}\frac{[z^n]W_\emptyset(z)}{[z^n]W(z)} \leq \frac{\sqrt{5}+1}{2\sqrt{5}} + O(\frac{1}{\sqrt{m}})
\end{displaymath}
as asymptotic behaviors, where the lower bound is conjectured as tight. This is the remaining problem about the asymptotic behavior.

Of course, we have remaining problems for other sections, too. For the \secr{exactcomp}, we could compute the density of the tautologies in the propositional logic system with $m$ variables, negation, and implication for the case $m=2,3,4$; and our method can be easily applied to other logical symbols, but for the case $m\leq 4$ or 5. The modern computation power is not enough for large $m\geq 6$. Here, the complexity of the exact algebraic formula is essential, so the limitation to getting the exact algebraic formula is unavoidable. But there is still a chance to find a realistic method to compute approximate values that is close to the real value as we want.

For the \secr{analytic}, we suggested some conditions under which the $s$-cut concept works, but it is not universal and hard to check if they are satisfied. We practically verified the result, but it is not proved satisfactorily. Hence, we will ask to find better conditions that when shifted $s$-cut operators are guaranteed to have the Jacobian less than 1, and rigorous proof that $s$-cut solutions give a better approximation than the raw ratios in most cases. 

This work was partially supported by the National Research Foundation of Korea (NRF) grant funded by the Korea government (MSIT) (No. 2019R1F1A1062462).

\bibliographystyle{plain}
\bibliography{Tautologies}

\end{document}